\let\oldtocsection=\tocsection
\let\oldtocsubsection=\tocsubsection
\let\oldtocsubsubsection=\tocsubsubsection
\renewcommand{\tocsection}[2]{\hspace{0em}\vspace{-0.7em}\oldtocsection{#1}{#2}}
\renewcommand{\tocsubsection}[2]{\hspace{1em}\vspace{-0.7em}\oldtocsubsection{#1}{#2}}
\renewcommand{\tocsubsubsection}[2]{\hspace{2em}\vspace{-0.7em}\oldtocsubsubsection{#1}{#2}}
\newtheorem{theorem}{Theorem}[section]
\newtheorem{proposition}[theorem]{Proposition}
\newtheorem{corollary}[theorem]{Corollary}
\newtheorem{lemma}[theorem]{Lemma}
\newtheorem{conjecture}{Conjecture}[section]
\newtheorem{example}[theorem]{Example}
\newtheorem{definition}[theorem]{Definition}
\newtheorem{remark}[theorem]{Remark}
\newcommand{\prob}{\mathbb{P}}
\newcommand{\E}{\mathbb{E}}
\newcommand{\Z}{\mathbb{Z}}
\newcommand{\R}{\mathbb{R}}
\newcommand{\C}{\mathbb{C}}
\newcommand{\ef}{\mathcal{F}}
\DeclareMathOperator{\Var}{Var}
\title{Generalized continuous and discrete stick fragmentation and Benford's law}
\author{Xinyu Fang}
\email{\textcolor{blue}{\href{mailto:fxinyu@umich.edu}{fxinyu@umich.edu}}}
\address{Department of Mathematics, National University of Singapore, Singapore}
\author{Steven J. Miller}
\email{\textcolor{blue}{\href{mailto:sjm1@williams.edu}{sjm1@williams.edu}},  \textcolor{blue}{\href{mailto:Steven.Miller.MC.96@aya.yale.edu}{Steven.Miller.MC.96@aya.yale.edu}}}
\address{Department of Mathematics and Statistics, Williams College, Williamstown, MA 01267}
\author{Maxwell Sun}
\email{\textcolor{blue}{\href{mailto:mrsun@mit.edu}{mrsun@mit.edu}}}
\address{Department of Mathematics, Massachusetts Institute of Technology, Cambridge, MA 02139}
\author{Amanda Verga}
\email{\textcolor{blue}{\href{mailto:amanda.verga@trincoll.edu}{amanda.verga@trincoll.edu}}}
\address{Department of Mathematics, Trinity College, Hartford, CT 06106}
\numberwithin{equation}{subsection}
\begin{document}

\maketitle

\begin{abstract}
Inspired by the basic stick fragmentation model proposed by Becker et al. in \cite{B+}, we consider three new versions of such fragmentation models, namely, continuous with random number of parts, continuous with probabilistic stopping, and discrete with congruence stopping conditions. In all of these situations, we state and prove precise conditions for the ending stick lengths to obey Benford's law when taking the appropriate limits. We introduce the \emph{aggregated limit}, necessary to guarantee convergence to Benford's law in the latter two models. We also show that resulting stick lengths are non-Benford when our conditions are not met. Moreover, we give a sufficient condition for a distribution to satisfy the Mellin transform condition introduced in 
\cite{JKKKM}, 
yielding a large family of examples.
\end{abstract}

\tableofcontents


\section{Introduction}

\subsection{Background}

Benford's Law, named after the physicist and mathematician Frank Benford who observed it in 1938, describes the non-uniform distribution of first digits in many real-world datasets. According to this law (which we define precisely below), the digit $ 1 $ arises as the leading digit approximately $ 30\% $ of the time, $ 2 $ approximately $ 17\% $ of the time, and so on, with larger digits occurring less frequently. This counterintuitive pattern emerges due to the logarithmic nature of the distribution. It can be observed in a wide range of naturally occurring datasets, such as financial reports, census data, scientific constants, and even seemingly unrelated fields like social media statistics. Today, there are numerous applications of Benford's law including in voting fraud detection \cite{Nig}, economics \cite{To, V-BFJ}, geology \cite{NM}, signal processing \cite{PHA}, and the physical sciences \cite{Buc, El, MSPZ, NWR, PTTV, Sha1, Sha2}. See \cite{BeHi, Mil1} for more on the general theory and fields where it is observed.

Given its ubiquity and many applications, it is therefore of interest to study which mathematical processes lead to Benford behavior. In general, it is often true that arithmetic operations (such as sums or products) of random variables yield a random variable that is closer to satisfying Benford's Law \cite{Adh,AS,Bh,JKKKM,Lev1,Lev2,MN1,Rob,Sa,Sc1,Sc2,Sc3,ST}. However, this is not always the case (see for example \cite{BH5}). In certain cases, a central limit theorem law is attainable, where Benfordness follows from the convergence of the distribution of mantissas (see \Cref{sec:benford}) to the uniform distribution.

In 2006, A. Kossovsky studied the distribution of leading digits of chained probability distributions, and conjectured that as the length of the chain increases then the behavior tends to Benford’s law \cite{Ko}. Inspired by this conjecture, Jang, Kang, Kruckman, Kudo and Miller proved in \cite{JKKKM} that if $X_1,\dots,X_m$ are independent continuous random variables with densities $f_1,\dots,f_m$, for any base $B$, for many choices of the densities (more precisely, those that satisfy a certain \textit{Mellin transform condition}, which we describe in detail later), the distribution of the digits of $X_1,\dots,X_m$ converges to Benford's law base $B$ as $m\to\infty$. We prove a more practical criterion for a distribution to satisfy the needed Mellin transform relation.

A nice way to translate such a result into a concrete physical process is by considering the stick fragmentation model, first proposed by Becker et al. \cite{B+} based upon work by the physicist Lemons \cite{Lemons} on partitioning a conserved quantity, which we review in the next subsection. Basically, \cite{B+} asked if a stick of a given length is repeatedly cut into two at random proportions, as the number of levels of this cutting goes to infinity, does the final collection of stick lengths converge to Benford behavior? This formulation indicates why a sequence of products of random variables with increasing lengths is a natural object to consider. Such a process also has discrete variants that resemble the particle decay process in nuclear physics, and thus may be useful for modelling those processes. Some examples of other types of decomposition models include \cite{AF, Bert, Ca, CaVo, IMS, IV, Ka, Kol, Lo, Ol, PvZ, Sl, vZ}.

We study several natural generalizations of the basic stick decomposition model; namely, 
\begin{enumerate}
    \item[i)] continuous fragmentation with random number of parts,
    \item[ii)]continuous fragmentation with probabilistic stopping, and
    \item[iii)] discrete fragmentation with congruence stopping conditions,
\end{enumerate}
which we will discuss in more detail in the subsequent sections.
Indeed, we show that a much larger family of stick fragmentation processes result in strong Benford behavior (defined rigorously in \Cref{sec:benford}).\footnote{There are other generalizations one could study; see \cite{BDMMM, DM} for Benfordness of $d$-dimensional frames of $n$-dimensional boxes.}
In particular, we give an affirmative answer to \cite[Conjecture 8.1(i)]{B+} and prove a result that vastly generalizes their conjecture.  
Notably, we introduce the \emph{aggregated limit}, which is necessary in order to talk about convergence to Benford's law in the latter two models. In order to show that the conditions we require to get convergence to Benford are in fact optimal, we prove non-Benfordness results when our precise conditions are not met.

\subsection{The basic model}
\label{sec:basicmodel}
We recall the following basic stick decomposition model studied in \cite{B+}. 
Start with a stick of length $L$ and fix a continuous probability distribution $\mathcal{D}$ with density function supported on $[0,1]$. Choose $p_1\in [0,1]$ according to $\mathcal{D}$ and break $L$ into $p_1 L$ and $(1-p_1)L$. This is the first \emph{level}. Now for each subsequent level, repeat the same process on every new stick obtained in the previous level, where each breaking involves sampling a new ratio $p_i\in [0,1]$ according to $\mathcal{D}$. Then at the end of the $N$-th level, each resulting stick has length of the form 
\begin{equation}
X_i \ = \ \prod_{n=1}^N p_n,
\end{equation}
where $p_n$ represents the proportion used to cut the ancestor of $X_i$ in the $n$-th level. 

For such a process and its variants, we are interested in whether the final collection of stick lengths $\{X_i\}$ follows \emph{Benford's law} as defined in \Cref{sec:benford}. 
Becker et al. \cite{B+} gave a proof of the Benfordness of the basic process described above, given that the distribution $\mathcal{D}$ satisfies a certain condition involving the convergence of a sum of products of its Mellin transform. This condition was proposed by Jang et al. in \cite[Theorem 1.1]{JKKKM}. We restate it precisely in \Cref{sec:mellinTransformCondition}. There, we also give a sufficient condition for a distribution to satisfy this property. Throughout, we adopt the convention that $\log x$ stands for the natural logarithm of $x$, although the base usually does not play a role unless we explicitly state it.

\subsection{Benford's law}
\label{sec:benford}
Fix a base $B>0$. Any $x>0$ can be written as
\begin{equation}
x\ =\ S_B(x)\cdot B^{k_B(x)}
\end{equation}
where $S_B(x)\in [1,B)$ is the \emph{significand} of $x$ base $B$ and $k_B(x)=\lfloor \log_B(x)\rfloor$ is the \emph{exponent}. The \emph{mantissa} of $x$ is defined to be
\[
M_B(x)\  = \ \log_B(x)-k_B(x).
\]

We have the following standard definition (see for example \cite{MN1}).

\begin{definition}[Benford's law for a sequence]
    A sequence of positive numbers $(a_i)$ is said to be \emph{Benford base $B$} if
    \begin{equation}
        \lim_{I\to\infty} \frac{\#\{i\leq I: 1\leq S_B(a_i)\leq s\}}{I} \ = \ \log_B s
    \end{equation}
    for all $s\in[1,B]$.
\end{definition}

We can also define the notion of Benford behavior for a random variable supported on $(0,\infty)$.

\begin{definition}
\label{def:StrongBenfordForD}
    A probability distribution $\mathcal{D}$, supported on $(0,\infty)$, is said to be \emph{Benford base $B$} if for $X\sim\mathcal{D}$, $M_B(X)$ follows the uniform distribution on $[0,1]$. This is equivalent to saying that
    \begin{equation}
        \prob(1\leq S_B(X)\leq s) \ =\ \log_B s
    \end{equation}
    for all $s\in [1,B]$.
\end{definition}

This is also sometimes referred to as \emph{strong Benfordness}, as opposed to \emph{weak Benfordness}, which only concerns the leading digits of a sequence of numbers. Since we are interested in the limiting behavior of a sequence of finite sets of stick lengths, we give the following precise definition of ``convergence to Benford''.

\begin{definition}
    A sequence of finite collections of positive numbers $(\mathcal{A}_n=\{a_{n,i}\})_n$ is said to \emph{converge to strong Benford behavior (base $B$)}  if 
    \begin{equation}
        \lim_{n\to\infty} \frac{\#\{i: 1\leq S_B(a_{n,i})\leq s\}}{|\mathcal{A}_n|} \ = \ \log_B s
    \end{equation}
    for all $s\in [1,B)$.
\end{definition}

Thus, base 10 the probability of a first digit being $d$ for a sequence that is strong Benford is $\log_{10}(d+1) - \log_{10}(d) = \log_{10}(1 + 1/d)$; in particular the probabilities decrease from about 30.1\% for a leading digit of 1 down to approximately 4.6\% for a 9. While there is thus a tremendous bias towards smaller leading digits, if instead we look at the distribution of the logarithm of the significands (the mantissas) we have the uniform distribution. 

In our case, the collections of random variables representing ending stick lengths are indexed by either the total number of levels $N$, the starting stick length $L$, and/or the number of starting sticks $R$, with these quantities going to infinity in the limit. With an abuse of notation, we always denote the collection of ending stick lengths by $\{X_i\}$ and suppress the parameters $N, L$ and $R$, but the limits will be explicitly stated. Before stating the definition of Benfordness for a sequence of collections of random variables, recall the following notations from \cite{B+}.

For $s\in [0,B)$, we define the indicator function of ``significand at most $s$'' by
\begin{equation}
\varphi_s(x) \ := \ \begin{cases}
    1, & \text{ if the significand of $x$ is at most $s$}\\
    0, & \text{ otherwise.}
\end{cases}
\end{equation}    
Denote the proportion of elements in a set $\{X_i\}$ whose significand is at most $s$ by
\begin{equation}
P(s) \ := \ \frac{\sum_i \varphi_s(X_i)}{\#\{X_i\}}.
\end{equation}

\begin{definition}[\cite{B+}]
\label{def:benfordCriterion}
A sequence of (finite) collections of random variables $(\{X_i\})_n$ is said to \emph{converge to strong Benford behavior (base $B$)} if
\begin{enumerate}
    \item \begin{equation}
        \lim_{n\to\infty} \E[P_n(s)] \ = \ \log_B(s)
    \end{equation} and
    \item \begin{equation}
    \lim_{n\to\infty} \Var[P_n(s)] \ = \ 0.
    \end{equation}
\end{enumerate}
\end{definition}

If we break a stick into $k$ pieces using some random process, this can be thought of as sampling from a distribution supported on $[0,1]^{k-1}$ (for the $k-1$ breaking points). From now on, we assume all distributions from which these breaking points are sampled are \emph{good}. This is defined precisely in \Cref{sec:mellinTransformCondition}, but can be thought of as requiring the distribution to ``sufficiently continuous''.


\subsection{Continuous fragmentation with random number of parts} In the basic fragmentation process described in \Cref{sec:basicmodel}, the number of parts each stick breaks into each time is fixed at 2. It is natural to ask whether the same conclusion holds when we allow this number to be randomly chosen as well. Indeed, we prove strong Benfordness of final stick lengths in the following two scenarios:
\begin{enumerate}
    \item the number of parts is chosen independently \emph{at each level}, and is uniform for every stick in that level;
    \item the number of parts is chosen for \emph{each individual stick} within every level.
\end{enumerate}
We now state our results precisely as follows.
Let $G$ be a discrete distribution on $\{1,2,\dots,m\}$ such that $\prob(X=1)<1$ for $X\sim G$. For each $k\in \{1,2,\dots,m\}$, let  $\ef_{k}$ be a finite set of \emph{good} probability distributions with density functions supported on $[0,1]^{k-1}$. 

\begin{theorem} \label{thm: continuous_breaking_levels}
    Start with a stick of length $L$. At each level $i$, independently choose $k \in \{1,2, \dots, m\}$ according to $G$, and break up every stick into $k$ parts by cutting it at the $k-1$ coordinates of a random variable sampled from some distribution in $\mathcal{F}_k$. Then, the distribution of the stick lengths at level $N$ approaches a strong Benford distribution almost surely as $N \to \infty$.
\end{theorem}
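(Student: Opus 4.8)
I would first condition on the data that fixes the \emph{shape} of the fragmentation tree: the sequence $(k_1,k_2,\dots)$ of numbers of parts, and, for each level $n$, the distribution $D_n\in\mathcal F_{k_n}$ that is used. Conditionally on these choices the number of sticks after level $N$ is the deterministic quantity $K_N=\prod_{n=1}^N k_n$, and a final stick, indexed by a branch $(a_1,\dots,a_N)$ with $a_n\in\{1,\dots,k_n\}$, has length $X_{(a)}=L\prod_{n=1}^N Y_n^{(a)}$, where $Y_n^{(a)}$ is the $a_n$-th piece-proportion of an independent sample from $D_n$ taken at the ancestor $(a_1,\dots,a_{n-1})$. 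For a fixed branch these factors are independent across $n$ (the cuts along the path act on distinct sticks); each has a density lying in the \emph{finite} list $\mathcal L$ of coordinate-marginals of the finitely many good distributions in $\mathcal F_1,\dots,\mathcal F_m$; and a level with $k_n=1$ contributes the harmless factor $1$. Since $\prob(X=1)<1$ and the $k_n$ are independent, the number $M_N:=\#\{n\le N:k_n\ge 2\}$ of genuine cutting levels satisfies $M_N/N\to\rho:=\prob(X\ge 2)>0$ almost surely, so $K_N\ge 2^{M_N}$ grows exponentially.

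\textbf{The mean.} Conditionally on the choices, $X_{(a)}$ is a fixed constant times a product of $M_N$ independent factors drawn from the finite family $\mathcal L$ of good densities. By \cite[Theorem 1.1]{JKKKM}, together with the uniformity one obtains over a finite family of good densities, for any $c>0$ the significand distribution of $c$ times such a product is within $\delta_{M_N}$ of Benford uniformly in $s$, where $\delta_M\to 0$ as $M\to\infty$; for a finite family of good densities this convergence is geometric, so in particular $\sum_M\delta_M<\infty$, which I use for the almost-sure statement. Hence $|\E[\varphi_s(X_{(a)})\mid\text{choices}]-\log_B s|\le\delta_{M_N}$ uniformly over branches, so $|\E[P_N(s)\mid\text{choices}]-\log_B s|\le\delta_{M_N}$, and since $M_N\to\infty$ a.s.\ this yields $\E[P_N(s)]\to\log_B s$, condition (1) of \Cref{def:benfordCriterion}.

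\textbf{The variance --- the crux.} I would write $\Var[P_N(s)\mid\text{choices}]=K_N^{-2}\sum_{(a),(b)}\mathrm{Cov}\bigl(\varphi_s(X_{(a)}),\varphi_s(X_{(b)})\mid\text{choices}\bigr)$ and sort pairs by the level $\ell$ at which branches $(a)$ and $(b)$ last agree. At most $K_N^2/K_t$ ordered pairs share an ancestor at level $t$, so pairs with $\ell\ge N-T$ form a fraction at most $1/K_{N-T}$ of all pairs and, since $|\varphi_s|\le 1$, contribute at most $1/K_{N-T}$ to the variance. If $\ell\le N-T-1$, then $(a)$ and $(b)$ have already diverged by level $\ell+1$, so conditioning on all information through level $\ell+1$ renders $\varphi_s(X_{(a)})$ and $\varphi_s(X_{(b)})$ conditionally independent, while each of $X_{(a)},X_{(b)}$ is then a fixed constant times a product of its $\ge M_N-M_{N-T}$ genuine independent factors from levels $\ell+2,\dots,N$; applying the mean estimate to these conditional products gives $|\mathrm{Cov}(\varphi_s(X_{(a)}),\varphi_s(X_{(b)})\mid\text{choices})|\le C\,\delta_{(M_N-M_{N-T})_+}$. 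Thus $\Var[P_N(s)\mid\text{choices}]\le K_{N-T}^{-1}+C\,\delta_{(M_N-M_{N-T})_+}$, and choosing $T=\lfloor\sqrt N\rfloor$ (so that $N-T\to\infty$ and $M_N-M_{N-T}\to\infty$ a.s.) sends both terms to $0$, giving condition (2) of \Cref{def:benfordCriterion}.

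\textbf{From $L^2$ to almost sure, and the main difficulty.} Combining the last two steps, $\E[(P_N(s)-\log_B s)^2]\le\E[\delta_{M_N}^2]+\E[K_{N-T}^{-1}]+C\,\E[\delta_{(M_N-M_{N-T})_+}]$, and with $T=\lfloor\sqrt N\rfloor$ each of the three terms decays geometrically in a power of $N$ or $\sqrt N$ --- using $K_{N-T}\ge 2^{M_{N-T}}$ with $M_{N-T}$ binomial, the concentration of $M_N$ near $\rho N$ (Chernoff), and $\sum_M\delta_M<\infty$ --- so $\sum_N\E[(P_N(s)-\log_B s)^2]<\infty$. By Chebyshev and Borel--Cantelli, $P_N(s)\to\log_B s$ almost surely for each $s$; ranging over a countable dense set of $s$ and using that $P_N(\cdot)$ is nondecreasing with continuous limit upgrades this to almost-sure convergence of the whole empirical significand distribution to Benford. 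The genuinely new and delicate step is the variance bound: sticks with a recent common ancestor have strongly correlated significand indicators, and the argument works only because such pairs are exponentially rare relative to $K_N^2$ while all other pairs are decorrelated at a rate quantified by the Mellin transform condition --- the exponential growth of $K_N$ and the summability of $(\delta_M)$ being exactly what make this strong enough to give almost-sure, not merely $L^2$, convergence.
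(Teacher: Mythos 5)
Your proposal is correct and follows essentially the same route as the paper's: condition on the tree shape (the sequence of $k_n$'s and the chosen distributions), obtain the mean via the uniform Mellin bound of \Cref{thm:Mellin_transform_condition_continuity} over the finite family of piece-proportion densities, and control the variance by grouping pairs of final sticks according to the level of their last common ancestor --- recently-diverged pairs are exponentially rare relative to $K_N^2$, while the remaining pairs have small covariance by the same Mellin estimate (the paper packages this dichotomy as the weighted average $\sum_n 2^{n+1-N}f(n)$ rather than splitting at an explicit threshold $T=\lfloor\sqrt{N}\rfloor$, but the mechanism is identical). The one genuine addition is your pathwise almost-sure upgrade via summable variances, Chebyshev, and Borel--Cantelli --- the paper stops after verifying the mean-and-variance criterion of \Cref{def:benfordCriterion} --- and you also track the number $M_N$ of genuine cutting levels more explicitly than the paper's brief closing remark handling $\prob(k=1)>0$.
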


\begin{theorem} \label{thm: continuous_breaking_sticks}
    Start with a stick of length $L$. At each level $i$, for each stick in that level, independently choose $k \in \{1,2,\dots, m\}$ according to $G$, and break the stick into $k$ parts by cutting it at the $k-1$ coordinates of a random variable sampled from some distribution in $\mathcal{F}_k$. Then, the distribution of the stick lengths at level $N$ approaches a strong Benford distribution almost surely as $N\to\infty$.
\end{theorem}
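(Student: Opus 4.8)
The plan is to realize the fragmentation as a marked Galton--Watson tree and then combine a \emph{many-to-one} (spine) computation for the first moment with a two-scale conditioning argument for concentration; the genuinely new feature, compared to the basic model and to \Cref{thm: continuous_breaking_levels}, is that here the number of sticks $Z_N$ at level $N$ is itself random and not measurable with respect to any ``global'' choice. Encode the process by a tree whose root is the initial stick, where a node at depth $i<N$ independently draws $k\sim G$ and gains $k$ children joined by edges carrying marks $(p_1,\dots,p_k)$ — the gap lengths of a sample from some (good) distribution in $\ef_k$ — so that a leaf $v$ at depth $N$ has length $X_v=L\prod_{e\in\gamma(v)}p_e$, the product over the edges on the root-to-$v$ path. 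Since $k\ge 1$ always and $\prob(k\ge 2)>0$, this is a supercritical Galton--Watson tree with bounded offspring that never dies; hence $Z_N\to\infty$ a.s., and in fact $Z_N/\mu^N\to W>0$ a.s.\ for $\mu=\E[k]>1$ by Kesten--Stigum. It suffices to prove $P_N(s)\to\log_B s$ a.s.\ for every $s$ in a countable dense subset of $[1,B)$: since $s\mapsto P_N(s)$ is nondecreasing and $s\mapsto\log_B s$ is continuous, this upgrades to all $s$, and by bounded convergence it also yields the two conditions of \Cref{def:benfordCriterion}.

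For the first moment I would establish the many-to-one identity $\E\big[\sum_{\operatorname{depth}(v)=N} f(p_{e_1(v)},\dots,p_{e_N(v)})\big]=\mu^{N}\,\E\big[f(Y_1,\dots,Y_N)\big]$ for bounded $f$, where $Y_1,\dots,Y_N$ are i.i.d.\ from the size-biased gap law $\bar\nu=\mu^{-1}\,\E\big[\sum_{j=1}^{k}\delta_{p_j}\big]$ (the identity follows by induction on $N$, using that cuts at distinct levels are independent, while cuts \emph{within} a level need not be). The measure $\bar\nu$ is a finite mixture of gap-marginals of the distributions in $\bigcup_k\ef_k$, and I would check from the Mellin/Fourier form of the condition of \Cref{sec:mellinTransformCondition} that this size-biasing and mixing preserve goodness. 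Applying the product theorem of \cite{JKKKM} (as used in \cite{B+}) to the $Y_i$, and using that multiplying by a scale $c>0$ only rotates the Fourier coefficients of the $\!\!\mod 1$ density of $\log_B\!\big(c\prod_{i}Y_i\big)$ without changing their magnitudes, gives that $S_B\!\big(c\prod_{i\le N}Y_i\big)$ converges to the Benford significand law \emph{uniformly in $c>0$}. Taking $f=\varphi_s(L\,\cdot\,)$ then yields $\E\big[\#\{v:\operatorname{depth}(v)=N,\ S_B(X_v)\le s\}\big]=\mu^{N}(\log_B s+o(1))$, which together with $\E[Z_N]=\mu^N$ expresses the ``Benfordness of a single lineage.''

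Since $\E$ of the ratio $P_N(s)=\sum_v\varphi_s(X_v)/Z_N$ is not the ratio of expectations, the concentration step uses a second scale. Fix $M<N$ and condition on $\mathcal F_M$: the level-$N$ leaves split into $Z_M$ families, one per level-$M$ stick $\sigma$, and given $\mathcal F_M$ these families are independent, each an independent copy of the fragmentation started from a stick of length $X_\sigma$ (the starting length enters only as a deterministic mantissa shift). Writing $A^{(\sigma)}_N$ and $Z^{(\sigma)}_N$ for the family's contribution to the numerator and denominator, the estimate above gives $\E[A^{(\sigma)}_N\mid\mathcal F_M]=\mu^{N-M}(\log_B s+o(1))$ \emph{uniformly in $\sigma$} as $N-M\to\infty$ and $\E[Z^{(\sigma)}_N\mid\mathcal F_M]=\mu^{N-M}$, while the bounded-offspring bound $\E[(Z^{(\sigma)}_n)^2\mid\mathcal F_M]=O(\mu^{2n})$ gives $\Var\!\big(\sum_\sigma A^{(\sigma)}_N\mid\mathcal F_M\big),\ \Var\!\big(\sum_\sigma Z^{(\sigma)}_N\mid\mathcal F_M\big)=O\!\big(Z_M\mu^{2(N-M)}\big)$. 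Dividing numerator and denominator by $Z_M\mu^{N-M}$ and applying Chebyshev, on the event where both lie within small relative error of their conditional means — of conditional probability $1-O(1/Z_M)$ — one gets $P_N(s)=\log_B s+o(1)$ as $N-M\to\infty$; letting $M\to\infty$ then $N\to\infty$ already gives convergence in probability (a reverse-martingale bookkeeping of $\mu^{-N}\sum_v(\varphi_s(X_v)-\log_B s)$ gives the same $L^2$ bound). To reach the a.s.\ statement I would instead take $M=M(N)$ with $M(N)\to\infty$ and $N-M(N)\to\infty$, e.g.\ $M(N)=\lfloor N/2\rfloor$: then $Z_{M(N)}$ is so large that $\prob(Z_{M(N)}\le r_N)$ is summable for a slowly growing $r_N\to\infty$, the conditional Chebyshev bounds become summable in $N$, and Borel--Cantelli gives $P_N(s)\to\log_B s$ a.s.; intersecting over the countable set of $s$ completes the proof.

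The main obstacle is exactly the random denominator $Z_N$: the many-to-one formula controls the \emph{unnormalized} count of sticks with significand at most $s$, but Benfordness concerns the normalized proportion, so one cannot simply divide the first-moment asymptotics. This is what forces the decomposition into conditionally independent families and, crucially, a quantitatively useful (summable) bound on the conditional variance of $P_N(s)$ given an \emph{early} level $M(N)$, at which the tree is already exponentially large — organizing this bookkeeping is the technical heart of the argument. The two supporting points that must be handled with care are that ``goodness'' survives the size-biasing and finite mixing producing $\bar\nu$, and that the \cite{JKKKM} limit is uniform in the scale $c$, which is what allows the per-family estimates to be summed regardless of the random starting lengths $X_\sigma$.
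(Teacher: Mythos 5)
Your plan is correct in outcome and in the underlying intuition, but it routes through machinery (Galton--Watson formalism, the size-biased spine / many-to-one lemma, Kesten--Stigum, Borel--Cantelli) that the paper avoids. The paper handles the first moment by conditioning on the full tree structure (the offspring choices, which are independent of the cut ratios): given the tree, $M$ is deterministic and each leaf length is a product of $N$ ratios drawn from distributions in the finite set $\bigcup_k\ef_k$, so the uniform Mellin/JKKKM bound of \Cref{thm:Mellin_transform_condition_continuity} gives $\E[P_N(s)\mid\text{tree}]\to\log_B s$ uniformly, hence $\E[P_N(s)]\to\log_B s$. For the variance, the paper conditions on the very late level $N-\lfloor\log\log N\rfloor$ rather than your early level $M(N)\approx N/2$: writing $a_1,\dots,a_r$ for the family sizes there, the number of highly dependent leaf pairs is $\sum_i a_i^2$, and since each $a_i\le m^{\log\log N}=(\log N)^{\log m}$ while $r>\sqrt N/2$ with probability tending to $1$ (a short block argument), the ratio $\sum_i a_i^2/M^2$ vanishes. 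By contrast you condition early, exploit conditional independence of the $Z_{M(N)}$ families to concentrate numerator and denominator separately, then apply Borel--Cantelli; this would in fact prove a literal almost-sure statement, which is stronger than the expectation-plus-variance criterion of \Cref{def:benfordCriterion} that the paper's proof actually verifies. Two details to tighten in your version: the many-to-one identity with i.i.d.\ spine increments from $\bar\nu$ requires the offspring/mark point process to have a fixed law, so ``some distribution in $\ef_k$'' must be read as fixed or chosen at random rather than adversarially per stick --- the paper sidesteps this by using only the uniformity of the Mellin bound over all sequences from $\bigcup_k\ef_k$; and the asserted summability of $\prob(Z_{M(N)}\le r_N)$ needs a quantitative input, namely that $Z_n$ is nondecreasing with $\prob(k=1)<1$, from which the same block argument the paper uses for $r>\sqrt N/2$ yields the required exponential tail bound.
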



\subsection{Continuous fragmentation with probabilistic dying}
We now investigate continuous processes in which the number of parts is fixed throughout, but each new stick \emph{dies} (i.e., stops breaking in subsequent levels) with a certain probability. More specifically, we consider the following fragmentation process. 

Start from $R$ sticks of length $L>0$. Fix a positive integer $k\geq 2$. We call a stick \emph{alive} if it continues to break in the next level and \emph{dead} otherwise. All initial sticks are assumed to be alive and each breaks into $k$ pieces in the first level with the $(k-1)$ breaking points being the coordinates of a random variable chosen from some \emph{good} probability distribution on $[0,1]^{k-1}$. The breaking point random variable of each living stick is independent from each other. After each level, each new stick obtained continues to be \emph{alive} with probability $r$ and \emph{dead} with probability $1-r$. Then we have the following.

\begin{theorem}
\label{conj:contbreakwstopratio}
    When $r=1/k$ and the alive/dead status of each stick is independent, the process ends in finitely many levels with probability 1, and the collection of ending stick lengths almost surely converges to strong Benford behavior as $R \to\infty$.
\end{theorem}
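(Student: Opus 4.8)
The plan is to recognize the set of \emph{alive} sticks as a Galton--Watson tree and to exploit the criticality that $r=1/k$ forces. Each alive stick breaks into $k$ pieces and each piece is independently alive with probability $r$, so the number of alive children of an alive stick is $\mathrm{Binomial}(k,r)$, with mean $kr$; for $r=1/k$ this mean equals $1$ and the offspring law is non-degenerate (it is never $\delta_1$ since $\prob(0)=(1-1/k)^k>0$ and $\prob(k)=k^{-k}>0$), so the alive-tree is a \emph{critical} Galton--Watson process and dies out in finitely many generations almost surely. This already gives the first assertion. Moreover, if $Z$ is the total progeny of the alive-tree generated by one initial stick, then each alive stick has $k$ children of which exactly $Z-1$ are alive in total, so the number of ending (dead) sticks produced by that initial stick is $N=(k-1)Z+1$. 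Criticality gives $\E[Z]=\infty$, hence $\E[N]=\infty$ while $N<\infty$ almost surely --- this dichotomy is precisely what singles out $r=1/k$ (a larger $r$ survives forever with positive probability; a smaller $r$ dies out but with $\E[N]<\infty$, so shallow, non-Benford sticks dominate).

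Next I set up the aggregated quantity. Index the $R$ independent processes by $j$, let $\mathcal{E}_j$ be the finite multiset of ending stick lengths of process $j$, and put $N_j=|\mathcal{E}_j|$ and $U_j(s)=\sum_{X\in\mathcal{E}_j}\varphi_s(X)$, so the aggregated proportion is $P_R(s)=\big(\sum_{j\le R}U_j(s)\big)/\big(\sum_{j\le R}N_j\big)$, with the pairs $(U_j(s),N_j)$ i.i.d.\ across $j$ and of infinite mean. Fix a level threshold $n_0$ and split each $\mathcal{E}_j$ into its \emph{shallow} part (sticks dying at level $\le n_0$) and its \emph{deep} part (level $>n_0$). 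Since the expected number of alive sticks at level $n$ is $1$ for all $n$ by criticality, the expected number of shallow ending sticks per process is at most $(k-1)n_0<\infty$, so by the strong law the shallow contributions to both numerator and denominator of $P_R(s)$ are $O(R)$ almost surely. On the other hand $\frac1R\sum_{j\le R}N_j\to\infty$ almost surely (strong law for nonnegative i.i.d.\ variables of infinite mean), so the denominator --- and with it the deep contribution --- is superlinear in $R$; hence the shallow terms are asymptotically negligible and, up to $o(1)$, $P_R(s)$ equals the significand ratio among deep ending sticks only.

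It remains to show the deep ending sticks are asymptotically Benford. Conditioning on the alive-tree $\mathcal{T}$ does not disturb the partition variables (alive/dead status is drawn independently of them), so given $\mathcal{T}$ an ending stick that died at level $t$ has length $L\prod_{n=1}^{t}q^{(n)}$, where the $q^{(n)}$ are independent, each a marginal coordinate of a partition drawn from one of the finitely many good distributions on $[0,1]^{k-1}$. By the uniform product-Benford estimate for finite families of good distributions --- the same tool underlying \Cref{thm: continuous_breaking_levels} and ultimately the Mellin-transform criterion of \cite{JKKKM} --- the probability that such a product has significand at most $s$, given death level $t$, is $\log_B s+\epsilon(t)$ with $\sup_{t>n_0}|\epsilon(t)|\to0$ as $n_0\to\infty$, uniformly over tree shapes and over the choices of good distributions along the lineage. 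Thus $\E[U_j^{\mathrm{deep}}(s)]=(\log_B s+o_{n_0}(1))\,\E[N_j^{\mathrm{deep}}]$; feeding this through the strong-law argument above shows $\limsup_R|P_R(s)-\log_B s|$ is bounded by a quantity tending to $0$ as $n_0\to\infty$, and letting $n_0\to\infty$ settles each fixed $s\in[1,B)$, hence all $s$ simultaneously by monotonicity of $P_R(\cdot)$ and of $\log_B(\cdot)$.

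The main obstacle is the concentration hidden in the last two paragraphs: the ending sticks from a single initial stick share ancestors, so $U_j(s)$ is a sum of \emph{dependent} indicators, and one must both control its fluctuation around the conditional mean and justify that the ratio of two i.i.d.\ sums of infinite mean converges to the ``aligned'' limit (the heavy tails of $\sum N_j$ come from tall trees, whose ending sticks are overwhelmingly deep, hence Benford). I expect to handle this by working conditionally on $\mathcal{T}$, where distinct edges carry independent proportions and a pair of ending sticks becomes independent beyond their most recent common ancestor, bounding the conditional variance of $U_j(s)$ by counting pairs of ending sticks according to the level at which they diverge --- the bookkeeping of \cite{B+} for the basic model, now carried out on a random critical tree --- and then using a truncation plus Borel--Cantelli argument to upgrade the infinite-mean strong law to almost-sure convergence of $P_R(s)$.
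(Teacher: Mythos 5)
Your proposal is correct in outline but reaches the result by a genuinely different route from the paper. You decompose the population into the $R$ i.i.d.\ single-stick processes, identify the alive sticks of each as a \emph{critical} Galton--Watson tree (offspring law $\mathrm{Binomial}(k,1/k)$), and get extinction from standard criticality, whereas the paper derives the extinction probability from the fixed-point equation $p=1-(1-p/k)^k$ and Bernoulli's inequality --- equivalent content, different packaging. For the Benford part the divergence is more substantive: the paper never uses the i.i.d.\ structure across initial sticks; it pools all $R$ processes and proves an inductive Chebyshev tail bound (\Cref{lem:tailBoundOnNi}) showing the aggregate alive-count stays within $R^{2/3}$ of $R$ for the first $R^{1/10}$ levels, which yields the lower bound $M\gtrsim R^{11/10}$ on the total number of dead sticks and hence the negligibility of the shallow ones. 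You instead get the same negligibility from $\E[N_j]=\infty$ (criticality) together with the strong law for nonnegative i.i.d.\ sums of infinite mean, which makes the denominator superlinear in $R$ while the shallow contribution is $O(R)$; this is cleaner conceptually and explains \emph{why} $r=1/k$ is the threshold, at the cost of handling a ratio of two heavy-tailed i.i.d.\ sums. Two caveats. First, your deferred concentration step is the right one and is essentially the paper's own variance argument transplanted onto a random tree: conditionally on the trees the bound $\Var(U_j\mid\mathcal{T}_j)\le N_j\,d\,k^d+N_j^2 f(d)$ from pair-counting by divergence depth, summed over $j$ and fed into Chebyshev against $\bigl(\sum_j N_j\bigr)^2\ge (kR)\sum_j N_j$, does close the argument --- but it delivers convergence \emph{in probability}, not the almost-sure convergence you announce; since $P_R(s)$ is bounded, convergence in probability already gives $\E[P_R(s)]\to\log_B s$ and $\Var[P_R(s)]\to 0$, which is exactly the paper's \Cref{def:benfordCriterion} and all the paper itself establishes, so you should either settle for that mode of convergence or supply the extra summability/subsequence argument you allude to with Borel--Cantelli. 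Second, when you invoke the uniform product estimate for deep sticks, make explicit that conditioning on the tree fixes, along each lineage, which coordinate-gap of the good distribution is taken at each level, so the length is a product of independent draws from the finite family of H\"older-continuous gap densities and \Cref{thm:Mellin_transform_condition_continuity} applies uniformly; this is implicit in your write-up and is the same reduction the paper makes.
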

\begin{theorem}
\label{thm:contR>1/k}
    When $r>1/k$, there is positive probability that the process with $R=1$ does not end in finitely many levels.
\end{theorem}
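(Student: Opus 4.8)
The plan is to observe that, for the question of termination, the stick \emph{lengths} are irrelevant: only the alive/dead bookkeeping matters, and this bookkeeping is governed by a branching process. Let $Z_n$ be the number of alive sticks present at level $n$, so that $Z_0 = 1$ for the process with $R = 1$. Each alive stick at level $n$ breaks into $k$ pieces, and each of these $k$ pieces is, independently, alive at level $n+1$ with probability $r$; moreover these choices are independent across all sticks. Hence each alive stick at level $n$ contributes to level $n+1$ an independent copy of $\xi \sim \mathrm{Binomial}(k, r)$, and $(Z_n)_{n \ge 0}$ is a Galton--Watson branching process with offspring law $\xi$. The event ``the process ends in finitely many levels'' is precisely the extinction event $\bigcup_n \{Z_n = 0\}$, so the theorem is equivalent to the statement that this Galton--Watson process survives with positive probability.

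Next I would invoke the classical extinction dichotomy for Galton--Watson processes. Writing $f(s) = \E[s^\xi] = (1 - r + rs)^k$ for the offspring probability generating function, the extinction probability $q = \prob\!\left(\bigcup_n \{Z_n = 0\}\right)$ equals the smallest fixed point of $f$ in $[0,1]$, and $q = 1$ if and only if $\E[\xi] = f'(1) = kr \le 1$ (excluding the degenerate case $\prob(\xi = 1) = 1$, which does not occur here since $k \ge 2$ and $0 < r < 1$ force $f(0) = (1-r)^k \in (0,1)$). Since the hypothesis $r > 1/k$ gives $kr > 1$, the process is supercritical, so $q < 1$ and the survival probability $1 - q$ is strictly positive. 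This is exactly the assertion of the theorem, and it dovetails with \Cref{conj:contbreakwstopratio}: the threshold $r = 1/k$ is precisely the criticality threshold $kr = 1$ of the associated branching process.

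For completeness I would include the short self-contained proof that $kr > 1$ implies $q < 1$. Since $k \ge 2$, the function $f$ is continuous and strictly convex on $[0,1]$ with $f(1) = 1$, and $f'(1) = kr > 1$ forces $f(s) < s$ for $s$ slightly below $1$; together with $f(0) = (1-r)^k > 0$, the intermediate value theorem produces a fixed point $q \in (0,1)$, which convexity shows to be the least fixed point. Its identification with the extinction probability is the standard iteration argument: $\prob(Z_n = 0) = f^{\circ n}(0)$ increases to the least fixed point of $f$ as $n \to \infty$. I do not expect a genuine obstacle; the only step needing care is the modeling claim in the first paragraph --- checking that applying the independent survival probability $r$ to each of the $k$ new pieces yields exactly a $\mathrm{Binomial}(k, r)$ offspring distribution, and that the length-sampling randomness neither affects nor is correlated with the alive/dead structure, so that no independence hypothesis of the Galton--Watson framework is violated.
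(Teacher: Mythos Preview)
Your proof is correct and takes a genuinely different route from the paper. You recognize that the alive/dead bookkeeping is a Galton--Watson branching process with $\mathrm{Binomial}(k,r)$ offspring distribution and then invoke (and cleanly re-prove) the classical extinction dichotomy: since the offspring mean is $kr>1$, the process is supercritical and survives with positive probability. The paper instead argues directly by Chebyshev: it first waits until the number of alive sticks $n_j$ exceeds some large threshold $A$ (which happens with positive probability), and then shows via a second-moment bound that $n_{i+1} > cn_i$ with $c = 1 + (rk-1)/2$ holds at each subsequent level with probability at least $1 - aA/n_i$; chaining these via a convergent product gives a positive probability that $n_i > Ac^{i-j}$ for all $i \ge j$. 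Your approach is shorter, more conceptual, and pins down the extinction probability as the least fixed point of $(1-r+rs)^k$; the paper's approach avoids any appeal to branching-process theory and yields the marginally stronger conclusion that $n_i$ in fact grows geometrically on the survival event, which is not needed for the theorem as stated but is noted there.
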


\begin{remark}
    The only remaining case is when $r<1/k$. It is not hard to see that the process ends in finitely many levels in this case. Our numerical simulations strongly suggest that the distribution is non-Benford, but it is an interesting open question as to what distributions result from such processes. 
\end{remark}

We also have the following version of \Cref{conj:contbreakwstopratio} with dependencies between the dead/alive status of different sticks. This serves as a continuous analogue to the discrete processes discussed in the next section in which the dead/alive status of sticks have number theoretic dependencies.
\begin{theorem}[\Cref{conj:contbreakwstopratio} but with dependence]
\label{conj: contbreaks_dependent}
    When $r=1/k$ but the alive/dead status of the children of the same stick are possibly dependent on one another, the collection of stick lengths after $N\geq \log R$ levels almost surely converges to strong Benford behavior as $R \to \infty$.
\end{theorem}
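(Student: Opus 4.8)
The plan is to \emph{condition on the skeleton} of the process — the random rooted forest of $R$ trees that records which potential sticks are ever born and, for each, the level at which it stops breaking, but forgets all the breaking ratios. The point is that the alive/dead coins are independent of the breaking ratios (only the coins of distinct siblings are coupled), so conditionally on the skeleton the ratios along every ancestral path are still independent samples from the good family, and a present stick of depth $\ell$ still has length $L\prod_{n=1}^\ell p_n$. First I would set up the bookkeeping: let $A_\ell$ be the number of sticks alive at level $\ell$ (so $A_0=R$), let $D_\ell=kA_{\ell-1}-A_\ell$ be the number that die at level $\ell$, and let $\mathcal N=A_N+\sum_{\ell=1}^N D_\ell=kR+(k-1)\sum_{\ell=1}^{N-1}A_\ell$ be the number of present sticks at level $N$. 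Since each stick has exactly $k$ children, each alive with \emph{marginal} probability $1/k$, linearity of expectation gives $\E[A_\ell]=R$ for all $\ell$ no matter how siblings are coupled; hence $\E[D_\ell]=(k-1)R$, $\E[\mathcal N]=\bigl((k-1)N+1\bigr)R$, and $\mathcal N\ge kR$ deterministically.

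Conditionally on the skeleton, the collection of present sticks is exactly the leaf set of a deterministic $k$-ary fragmentation forest of depth $\le N$ driven by good ratio distributions — precisely the kind of object treated en route to \Cref{conj:contbreakwstopratio} and \Cref{thm: continuous_breaking_sticks}. Those deterministic estimates give, with $\bar p_\ell(s):=\prob\bigl(1\le S_B(L\prod_{n=1}^\ell p_n)\le s\bigr)$,
\[
\E\bigl[P(s)\mid\text{skeleton}\bigr]\ =\ \frac{1}{\mathcal N}\Bigl(\sum_{\ell=1}^N D_\ell\,\bar p_\ell(s)+A_N\,\bar p_N(s)\Bigr),
\]
and, by the branch-decorrelation estimate behind those results, $\Var[P(s)\mid\text{skeleton}]\to0$ as long as the realized forest is large and well-branched. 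By the good-distribution hypothesis and the Mellin-transform criterion of \cite{JKKKM} (the same input as for \Cref{conj:contbreakwstopratio}), $\bar p_\ell(s)\to\log_B s$ as $\ell\to\infty$; so, given $\varepsilon$ and $j_0$ with $|\bar p_\ell(s)-\log_B s|<\varepsilon$ for all $\ell\ge j_0$,
\[
\bigl|\E[P(s)\mid\text{skeleton}]-\log_B s\bigr|\ \le\ \varepsilon+O\!\left(\frac{\sum_{\ell<j_0}D_\ell}{\mathcal N}+\frac{A_N}{\mathcal N}\right).
\]
This is the \emph{aggregated limit} at work: no single level is Benford, but the $D_\ell$-weighted average over levels $1,\dots,N$ is, provided almost every present stick has large depth. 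So it remains to show that, almost surely over the skeleton and with $N\ge\log R$, we have $\mathcal N\to\infty$, $\sum_{\ell<j_0}D_\ell/\mathcal N\to0$ for every fixed $j_0$, and $A_N/\mathcal N\to0$ as $R\to\infty$.

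For the short-depth mass, $\sum_{\ell<j_0}D_\ell\le kj_0\max_{\ell<j_0}A_\ell$, and since the $R$ root subtrees are independent, for each fixed $\ell$ the count $A_\ell$ is a sum of $A_{\ell-1}$ conditionally independent bounded sibling contributions of mean $1$, hence $A_\ell=(1+o(1))R$ with probability $\to1$. Combined with $\mathcal N/R\to\infty$ in probability this yields $\sum_{\ell<j_0}D_\ell/\mathcal N\to0$, and a similar comparison handles $A_N/\mathcal N\to0$. Feeding this back, $\E[P(s)\mid\text{skeleton}]\to\log_B s$ and $\Var[P(s)\mid\text{skeleton}]\to0$ along almost every skeleton, which gives the convergence in the sense of \Cref{def:benfordCriterion}; the upgrade to almost-sure convergence of the realized sequence $(\{X_i\})$ is then routine (pass to a rapidly increasing subsequence of $R$ and invoke Borel--Cantelli, or use a bounded-differences inequality conditionally on the skeleton).

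The main obstacle is the claim $\mathcal N/R\to\infty$, and more generally the skeleton control. Unlike the independent case of \Cref{conj:contbreakwstopratio}, where $\mathcal N$ genuinely concentrates, here the alive-counts $(A_\ell)_\ell$ can form a \emph{critical} Galton--Watson process — for instance if the $k$ siblings are ``all alive or all dead'' together, or if ``exactly one sibling survives'' (the latter making the process never terminate) — so $\mathcal N$ need not concentrate around its mean $\bigl((k-1)N+1\bigr)R$ and can be heavy-tailed, and one cannot simply invoke a law of large numbers. One must instead argue robustly, from $\E[A_\ell]=R$ and the independence of the $R$ root subtrees alone, that a typical skeleton nonetheless has $\mathcal N$ far larger than $R$ with only a vanishing fraction of sticks at bounded depth. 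This is exactly what forces both $R\to\infty$ and $N\to\infty$ into the statement, hence the hypothesis $N\ge\log R$; once it is in hand, everything else reduces to the deterministic fragmentation-tree estimates already in the paper.
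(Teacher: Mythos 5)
Your architecture is right and essentially matches the paper's: condition away the sibling dependence, reduce to the independent-ratio product estimates, and control everything through the level counts $A_\ell$ (the paper's $n_i$). But you have left the load-bearing step unproven. You explicitly flag ``$\mathcal N/R\to\infty$ and more generally the skeleton control'' as the main obstacle, suggest that the alive-count process is critical and therefore ``need not concentrate around its mean,'' and then stop. That is the one thing the paper actually has to prove here (\Cref{lem:dependentTailBound}), and without it neither your expectation bound (the $\sum_{\ell<j_0}D_\ell/\mathcal N$ and $A_N/\mathcal N$ terms) nor the variance bound (which needs a lower bound on the total number of dead sticks to show that highly dependent pairs are $o(M^2)$) goes through. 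So as written this is a genuine gap, not a routine omission.

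Moreover, your skepticism about concentration is misplaced, and the ingredients you already listed close the gap. Sibling groups attached to \emph{distinct} parents are independent, each contributes a bounded number of alive children with mean $1$ and variance at most $k^2$, so conditionally on $A_{\ell}$ the next count $A_{\ell+1}$ has mean $A_\ell$ and variance at most $k^2A_\ell$. The process is indeed critical, but criticality only means the mean is flat; the fluctuation accumulated over $i$ levels has variance $O(i R k^2)$, which is $o(R^2)$ for $i=o(R)$. The paper runs exactly this Chebyshev bound through an induction on levels to get
\begin{equation}
\prob(|A_i-R|\le t)\ \ge\ 1-\frac{2i^3Rk^2}{t^2},
\end{equation}
and then takes $t=R^{2/3}$ and $i\le h(R)=R^{1/10}$, so that $A_i=R\left(1+O(R^{-1/3})\right)$ \emph{uniformly over the first $R^{1/10}$ levels} with probability $1-O(R^{-1/30})$. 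This yields at least $(k-\tfrac32)R^{11/10}$ dead sticks, which dominates both the $O(R\log R)$ sticks dying in the first $\log R$ levels and the count of pairs with a recent common ancestor; everything else then proceeds as in \Cref{conj:contbreakwstopratio}. Your version only asserts $A_\ell=(1+o(1))R$ for each \emph{fixed} $\ell$, which is not enough: you need the concentration to hold simultaneously for a number of levels growing with $R$ in order to force $\mathcal N/R\to\infty$. Extending your fixed-$\ell$ Chebyshev argument to $\ell\le R^{1/10}$ by induction is precisely the missing lemma.
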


\begin{theorem}
\label{thm: contbreaks_dependent_r_small}
    When $r < 1/k$ and the alive/dead status of the children of the same stick are possibly dependent on one another, the collection of stick lengths does not converge to strong Benford behavior for sufficiently large bases $B$.
\end{theorem}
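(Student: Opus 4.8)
The plan is to show that for $r<1/k$ the empirical distribution of the ending stick lengths converges, as $R\to\infty$, to a single \emph{fixed} probability distribution $\mathcal{D}^*$ on $(0,L)$, and that no fixed distribution is Benford base $B$ once $B$ is large enough; non‑convergence to strong Benford then follows since condition (1) of \Cref{def:benfordCriterion} is violated. First I would record the branching structure: the $R$ initial sticks spawn $R$ independent family trees, and since every new stick is alive with marginal probability $r$ while distinct sticks break independently, linearity of expectation gives that the expected number of descendants of a single root that are alive at level $n$ is $(kr)^n$, and the expected number that \emph{die exactly at level $n$} — hence are ending sticks cut exactly $n$ times — is $k(1-r)(kr)^{n-1}$. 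As $kr<1$, summing over $n$ shows the expected number of ending sticks per tree is $\mu:=k(1-r)/(1-kr)<\infty$ and the process terminates almost surely (it is subcritical), while the limiting empirical distribution of the number of cuts of an ending stick is $\mathrm{Geom}(1-kr)$ on $\{1,2,\dots\}$. The sibling dependence is immaterial here: it affects the variances of these counts but not their expectations. This is exactly the contrast with the case $r=1/k$ of \Cref{conj:contbreakwstopratio}, where $\sum_n k(1-r)(kr)^{n-1}$ diverges and the number of cuts escapes to infinity — which is what forces Benford behavior there.

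Next I would pass to the aggregated limit. Write $P_R(s)=A_R/D_R$, where $D_R$ (always at least $2R$) is the total number of ending sticks over all $R$ trees and $A_R=\sum_i\varphi_s(X_i)$. Both $A_R$ and $D_R$ are sums of $R$ i.i.d.\ tree‑contributions, so by the strong law of large numbers $D_R/R\to\mu$ and $A_R/R\to\mu\,\prob(S_B(X^*)\le s)$ almost surely, where $X^*=L\prod_{j=1}^{N}q_j$ with $N\sim\mathrm{Geom}(1-kr)$, the $q_j$ i.i.d.\ copies of the proportion of a uniformly chosen piece of a break (nondegenerate, and $<1$ almost surely since $k\ge 2$ and the distribution is good), and $N$ independent of the $q_j$. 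The identification of the second limit uses a first‑moment (Campbell‑type) identity: the breaking proportions along any ancestral line are independent of all alive/dead statuses, so conditioning on a level‑$n$ position being a living line ending in a dead stick does not bias its length, giving $\E[\#\{\text{level-}n\text{ dead sticks with }S_B\le s\}]=k(1-r)(kr)^{n-1}\,\prob(S_B(L\prod_{j=1}^n q_j)\le s)$. Dividing and using bounded convergence (since $P_R(s)\in[0,1]$), $\E[P_R(s)]\to \prob(S_B(X^*)\le s)=:\Phi(s)$. Hence $\mathcal{D}^*:=\mathrm{Law}(X^*)$, which depends on neither $R$ nor $B$, is the limiting significand law of the collection.

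It remains to show $\mathcal{D}^*$ is not Benford base $B$ for all large $B$. Set $\psi:=\log X^*=\log L+\sum_{j=1}^N\log q_j$, an almost surely finite real random variable (finite since $q_j>0$ a.s.), and let $\chi(t):=\E[e^{it\psi}]=\E[(X^*)^{it}]$ be its characteristic function — equivalently the Mellin transform of $\mathcal{D}^*$ along the imaginary axis. Being a characteristic function, $\chi$ is continuous with $\chi(0)=1$, so there is $\delta>0$ with $|\chi(t)|\ge 1/2$ for $|t|\le\delta$. But $\mathcal{D}^*$ being Benford base $B$ would force every nonzero Fourier coefficient of $\psi/\log B\bmod 1$ to vanish, in particular $\chi(2\pi/\log B)=\E[e^{2\pi i\log_B X^*}]=0$; yet for any $B>e^{2\pi/\delta}$ we have $2\pi/\log B<\delta$, so $|\chi(2\pi/\log B)|\ge 1/2>0$, a contradiction. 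Thus for all such $B$ there is some $s\in[1,B)$ with $\Phi(s)=\prob(S_B(X^*)\le s)\ne\log_B s$, so $\lim_{R\to\infty}\E[P_R(s)]\ne\log_B s$, condition (1) of \Cref{def:benfordCriterion} fails, and the collection does not converge to strong Benford behavior.

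The conceptual heart — and the only place this differs essentially from the positive results — is the observation that for $r<1/k$ the number‑of‑cuts statistic converges to a fixed \emph{summable} (geometric) law, so the aggregated limit collapses to one rigid distribution; given that, non‑Benfordness for large $B$ falls out of continuity of the Mellin transform at the origin. The main technical nuisance, rather than a genuine obstacle, is the law‑of‑large‑numbers and Campbell bookkeeping justifying $\E[P_R(s)]\to\prob(S_B(X^*)\le s)$ in the presence of sibling dependence; this closely parallels the argument for \Cref{conj:contbreakwstopratio}.
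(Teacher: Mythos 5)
Your proof is correct, and it takes a genuinely different route from the paper's. The paper argues locally and in first moments only: it computes $\E[M_R]=R\frac{k-kr}{1-kr}$ (so by Markov the total number of ending sticks is at most $2R\frac{k-kr}{1-kr}$ with probability at least $1/2$), and then shows via Chebyshev that already the \emph{first level} produces at least $b_k(1-r)R/2$ dead sticks of length in $[L/k,L]$; hence a fraction at least $\frac{b_k(1-kr)}{4k}$ of all ending sticks have mantissa confined to an interval of length $\log_B k$, which is incompatible with uniformity once $B>k^{4k/(b_k(1-kr))}$. You instead identify the full aggregated limit --- the significand law converges to that of $X^*=L\prod_{j=1}^N q_j$ with $N$ geometric of parameter $1-kr$ --- and then rule out Benfordness of this single fixed law for all large $B$ by continuity of the characteristic function of $\log X^*$ at the origin: Benfordness would force $\E\bigl[e^{2\pi i\log_B X^*}\bigr]=0$, yet this quantity tends to $1$ as $B\to\infty$. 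Your route proves strictly more: it makes rigorous the convergence to a unique limiting distribution that the paper's \Cref{lem:limitExists} only sketches (that lemma even ends with an unfinished note), and it exhibits the limit explicitly, which speaks to the paper's open question about which non-Benford distributions arise when $r<1/k$; the price is the SLLN and first-moment (Campbell) bookkeeping needed to pass from per-tree expectations to $\E[P_R(s)]$. The paper's route is more elementary, needing no convergence statement at all. Both arguments implicitly use that the alive/dead statuses are independent of the cut ratios (only sibling statuses may be correlated --- the paper's \Cref{lem: r_small_first_level} makes the same assumption), and both yield an explicit, model-dependent threshold on $B$, consistent with the ``sufficiently large $B$'' in the statement.
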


\subsection{Discrete fragmentation with congruence stopping condition}
Now we turn to the setting of discrete stick fragmentation. For a subset $\mathfrak{S}\subseteq\Z_+$ and a positive integer $L\notin\mathfrak{S}$, define the discrete fragmentation process with starting length $L$ and stopping set $\mathfrak{S}$ as follows.

Start with a stick of integer length $L$. In the first level, it breaks into two new sticks at an integer point chosen according to the uniform distribution on $\{1,\dots,L-1\}$. Now after each level, a new stick becomes \emph{dead} if its length is in $\mathfrak{S}$, the \emph{stopping set}, and continues to be \emph{alive} otherwise. The starting stick is assumed to be alive. When a stick is \emph{dead}, it no longer breaks in the subsequent levels. Note that the \emph{stopping condition}, namely the condition for a stick to become dead,
is described by the subset $\mathfrak{S}\subseteq \Z_+$, called the \emph{stopping set}.
In the next level, each of the living sticks continues to break into two following the discrete uniform distribution. The process ends when all new sticks meet the stopping condition, i.e., when all sticks at the end of a level are dead. We are interested in whether the final collection of stick lengths converge to the Benford distribution as we take the limit $L\to\infty$.

In \cite{B+}, the authors showed the Benfordness of a discrete process in which the breaking only continues on one side of the stick, with stopping condition being length equal to $1$. Here we study processes with more general stopping conditions defined by congruence classes. Our results are the following.

\begin{theorem} \label{thm:evenStopBreaking}
    Start with a stick of \emph{odd} integer length $L$. Let the stopping set be $\mathfrak{S}=\{1\}\cup \{2m: m\in\Z_+\}$. Namely, a stick dies whenever its length is $1$ or even. Then the distribution of lengths of all dead sticks at the end approaches strong Benfordness as $L \to \infty$.
\end{theorem}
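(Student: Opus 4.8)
The plan is to exploit the rigid combinatorial structure that the odd starting length forces on the process.

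\textbf{Step 1 (reduction to a one-dimensional chain).} Since $L$ is odd, breaking an odd stick of length $\ell\ge 3$ at a uniform integer point always produces one even piece and one odd piece; the even piece dies immediately, and the odd piece dies only if its length is $1$. Hence exactly one stick is alive at every level, and the alive lengths form a strictly decreasing chain of odd integers $L=\ell_0>\ell_1>\dots>\ell_K=1$ in which, given $\ell_{k-1}$, $\ell_k$ is uniform on the odd integers in $[1,\ell_{k-1}-2]$. Writing $a_k:=(\ell_k-1)/2$ turns this into: $a_0=(L-1)/2$ and $a_k\sim\mathrm{Unif}\{0,1,\dots,a_{k-1}-1\}$, stopping at the first $K$ with $a_K=0$. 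The dead lengths are $\{e_k:=\ell_{k-1}-\ell_k=2(a_{k-1}-a_k):1\le k\le K\}$ together with one piece of length $1$. Since that lone piece is a vanishing fraction of the $K+1\to\infty$ dead sticks, and since passing from $\{g_k:=a_{k-1}-a_k\}$ to $\{e_k\}=\{2g_k\}$ only rotates every mantissa by $\log_B 2$ — a measure-preserving operation that does not affect the limits in \Cref{def:benfordCriterion} — it suffices to show that $\{g_k:1\le k\le K\}$ converges to strong Benford behavior as $L\to\infty$.

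\textbf{Step 2 (per-level Benfordness via \cite{JKKKM}).} Realize the chain by $a_k=\lfloor a_{k-1}U_k\rfloor$ with $U_1,U_2,\dots$ i.i.d.\ $\mathrm{Unif}[0,1)$; then $g_k=\lceil a_{k-1}(1-U_k)\rceil$, and iterating the floors gives $a_{k-1}=a_0\prod_{j=1}^{k-1}U_j\cdot(1+\varepsilon_k)$ with $|\varepsilon_k|\le\exp\!\bigl(O(k/a_{k-1})\bigr)-1$. Hence on an event $E_k$ of probability $1-o(1)$ (where $a_{k-1}$, and therefore $g_k$, exceeds a slowly growing threshold),
\[
\log_B g_k \ = \ \log_B a_0 \;+\; \sum_{j=1}^{k-1}\log_B U_j \;+\; \log_B(1-U_k)\;+\;o(1),
\]
i.e.\ a constant plus $\log_B$ of a product of the $k$ independent random variables $U_1,\dots,U_{k-1},1-U_k$. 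The uniform density on $[0,1]$ is \emph{good} — its Mellin transform $1/s$ decays like $1/|t|$ on the line $\Re s=1$, a basic instance of the sufficient criterion in \Cref{sec:mellinTransformCondition} — so by the theorem of \cite{JKKKM} restated there, the mantissa of $a_0\,U_1\cdots U_{k-1}(1-U_k)$ converges to the uniform distribution on $[0,1]$ as $k\to\infty$, at a rate uniform in the constant $a_0$ (scaling a factor by a positive constant does not change the modulus of its Mellin transform on $\Re s=1$). Thus $\prob(S_B(g_k)\le s)\to\log_B s$ provided $k\to\infty$ and $k\le K-\omega(1)$, the latter ensuring via a standard large-deviation bound on $\sum_{j<k}\log_B U_j$ (mean $-1/\ln B$) that $\prob(E_k)\to 1$.

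\textbf{Step 3 (from levels to the collection, and the main obstacle).} Since $\log_B a_{k-1}$ drifts down by $1+o(1)$ per step, $K\to\infty$ and $K=(1+o(1))\log a_0$ with concentration, so $|\{g_k\}|=K+1\to\infty$. Split $\{1,\dots,K\}$ into its first $A$ levels, its last $A$ levels (with $A=A(L)\to\infty$, $A=o(\log L)$), and a bulk; the two end blocks contribute $O(A/\log L)\to 0$ to $P(s)$, while on the bulk Step 2 yields $\E[\varphi_s(g_k)]\to\log_B s$, so $\E[P(s)]\to\log_B s$. For the variance, conditioning on the history $\mathcal F_j$ through level $j$ (which fixes $a_j$) leaves $\log_B g_k$, for $k>j$ in the bulk, equal to an $\mathcal F_j$-measurable constant plus $\log_B$ of a product of $k-j$ independent good variables, so $\E[\varphi_s(g_k)\mid\mathcal F_j]\to\log_B s$ uniformly as $k-j\to\infty$; hence $\mathrm{Cov}\bigl(\varphi_s(g_j),\varphi_s(g_k)\bigr)\to 0$ as $|j-k|\to\infty$, and the elementary second-moment bound $\Var[P(s)]\lesssim D/K+\sup_{d>D}|\mathrm{Cov}(d)|$, optimized over a slowly growing $D$, gives $\Var[P(s)]\to 0$; alternatively, the aggregated limit bypasses this step by averaging $\varphi_s$ over many independent runs (and/or over all odd starting lengths up to $L$), which forces $\Var[P(s)]\to 0$ while leaving $\E[P(s)]$ unchanged. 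The delicate point throughout is uniformity: the chain is honestly discrete, not a product of continuous variables, so the floor errors $\varepsilon_k$ must be controlled on the event that the intermediate lengths $a_{k-1}$ are only moderately large, and the convergence rate borrowed from \cite{JKKKM} must beat the $o(\log L)$ boundary and atypically small levels — i.e.\ one needs quantitative control of both the discretization and the random (though concentrated) number of levels $K$, which is precisely the situation the aggregated limit is meant to handle.
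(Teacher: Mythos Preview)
Your proposal is correct and follows essentially the same route as the paper: recognize that exactly one (odd) stick survives each level, couple the discrete chain to a continuous product of i.i.d.\ uniforms on $[0,1]$, control the rounding error multiplicatively, and transfer Benfordness from the continuous product via the Mellin-transform criterion of \cite{JKKKM}. Your substitution $a_k=(\ell_k-1)/2$, which turns the odd-chain into the clean recursion $a_k\sim\mathrm{Unif}\{0,\dots,a_{k-1}-1\}$, is a tidier bookkeeping device than the paper's direct bounds on $\ell_k$, and you argue expectation and variance from scratch where the paper simply cites \cite[Theorem~1.9]{B+} for the continuous side; otherwise the skeleton --- closeness lemma, fragment-count estimate $K\asymp\log L$, bulk/boundary split --- is the same.
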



\begin{theorem}
\label{thm:discreteStopAtHalfResidues}
    Fix an even modulus $n \geq 2$ and a subset $S\subset\{0,\dots,n-1\}$ of size $n/2$ representing the residue classes. Let the stopping set be
    \begin{equation}
        \mathfrak{S} \ := \ \{1\}\cup\{m\in\Z_+ :m = qn+r,\  r\in S, q\in \Z\}.
    \end{equation}
    If we start with $R$ identical sticks of positive integer length $L\notin\mathfrak{S}$, then the collection of ending stick lengths converges to strong Benford behavior given that $R>(\log L)^3$ as $L \to\infty$.
\end{theorem}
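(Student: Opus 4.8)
The hypotheses that $n$ is even and $|S| = n/2$ are precisely what make this a \emph{critical} fragmentation-with-dying process, so the plan is to realize it as a discrete instance of the model behind \Cref{conj: contbreaks_dependent} and then pass from ``the sticks present at a fixed level'' to the aggregated collection of all sticks that ever die. When a stick of length $\ell$ with $\ell \bmod n \notin S$ breaks at a uniform point $j \in \{1,\dots,\ell-1\}$, the residue $j \bmod n$ is within $O(n/\ell)$ in total variation of the uniform law on $\Z/n\Z$, so the expected number of living children is $(1 - |S|/n) + (1 - |S|/n) = 1 + O(n/\ell)$: asymptotically critical. Since the two siblings' survival statuses are linked by $(j \bmod n) + ((\ell - j)\bmod n) \equiv \ell \pmod n$, this is a \emph{dependent} critical process, which is why we must go through \Cref{conj: contbreaks_dependent} rather than \Cref{conj:contbreakwstopratio}. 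Two things then need to be proved: (a) a stick that has survived many breaks has a length whose mantissa is nearly uniform, and (b) almost all dead sticks are of this type, with enough concentration to kill the variance.

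For (a) I would prove an \emph{equidistribution lemma}: a stick that has undergone $t$ breaks has length $L\prod_{i=1}^{t}p_i$ with each ratio $p_i$, conditionally on the history, within $O(1/\ell_i)$ in total variation of $\mathrm{Unif}[0,1]$ ($\ell_i$ the parent length), so $\log_B$ of the length is $\log_B L$ plus a sum of $t$ nearly i.i.d.\ terms of mean $-1/\log B$ and variance bounded below; since the uniform distribution is \emph{good} (it satisfies the Mellin-transform criterion of \cite{JKKKM}), its mantissa is within some $\varepsilon(t) \to 0$ of uniform on $[0,1]$, uniformly in $L$ and in the base. The delicate point is that ``this lineage is still alive after $t$ levels'' is a conjunction of $t$ congruence conditions on the intermediate lengths, and one must check that imposing it does not bias the multiplicative structure of the final length: here one uses that for an integer $\ell \to \infty$ the pair $(\ell \bmod n,\, M_B(\ell))$ decouples, so conditioning on residues perturbs the mantissa by only $O(1/\ell)$. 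This forces us to set aside ``short'' sticks (say, of length below $\sqrt{L}$), which step (b) shows are negligible.

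For (b) I would carry out a census of the random fragmentation forest. Because every surviving lineage has strictly decreasing integer length, it dies within $O(\log L)$ levels in expectation (and by level $\log_2 L + O(1)$ with overwhelming probability); writing $A_t$ for the number of living and $D_t$ for the number of newly dead sticks at level $t$, criticality gives $\E[A_t], \E[D_t] \sim R$ for $t \lesssim \log L$, and solving the recursion $h(\ell) = \tfrac{2}{\ell-1}\sum_{j<\ell} h(j)$ (with $h(\ell) = 1$ on $\mathfrak{S}$) for the expected leaf count gives $\E[\#\{X_i\}] \asymp R\log L$. Consequently the number of dead sticks that either died before a slowly growing level $t_0 \to \infty$ or have length below $\sqrt{L}$ is $O\!\big(R\,t_0 + R\log L / \log\log L\big) = o(R\log L)$, so those contribute $o(1)$ to $P(s)$; the same recursion with $e^{2\pi i m \log_B(\,\cdot\,)}$ in place of $1$ on $\mathfrak{S}$, together with the equidistribution lemma applied to the remaining ``good'' leaves, yields $\lim_{L\to\infty}\E[P(s)] = \log_B s$ for every $s \in [1,B)$, the first condition of \Cref{def:benfordCriterion}.

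Finally, the hypothesis $R > (\log L)^3$ enters in the variance. The $R$ initial sticks spawn i.i.d.\ subtrees, and $P(s)$ is a ratio of two sums over these subtrees, each summand bounded, of mean $\Theta(\log L)$ and variance at most a fixed power of $\log L$ (the truncated total progeny of a critical branching process having variance of order $(\log L)^3$); Chebyshev together with a ratio/delta-method estimate then bounds $\Var[P(s)]$ by a power of $\log L$ divided by $R$, which tends to $0$ once $R$ exceeds that power of $\log L$, so $R > (\log L)^3$ is comfortably enough. Both conditions of \Cref{def:benfordCriterion} hold, so the collection of ending stick lengths converges to strong Benford behavior; in particular this recovers and generalizes \Cref{thm:evenStopBreaking}. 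I expect the equidistribution-under-conditioning step to be the main obstacle: decoupling the residue-defined survival events from the scale of a length, uniformly along every lineage of a random tree, is the genuinely arithmetic core of the argument, and the census and second-moment bounds are bookkeeping built on top of it; a secondary nuisance is that sibling dependence blocks the cleaner independent route and forces us to import the estimates behind \Cref{conj: contbreaks_dependent}.
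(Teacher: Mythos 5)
Your outline is correct and its skeleton coincides with the paper's: couple the discrete process level-by-level with a continuous one whose ratios differ by $O(n/\ell)$, reduce the continuous side to the dependent critical model of \Cref{conj: contbreaks_dependent}, run a census of the fragmentation forest to discard leaves that die too early or whose ancestors become too short, and spend the hypothesis $R>(\log L)^3$ on the second-moment/concentration step. The one place where you genuinely diverge from the paper is the step you rightly single out as the crux, namely decoupling the congruence-defined survival events from the multiplicative structure of the lengths. You propose to prove this as an a posteriori estimate (conditioning on residues perturbs each ratio by $O(n/\ell)$ -- in Kolmogorov distance, not total variation as you write, since a discrete law is at total-variation distance $1$ from the uniform density). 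The paper instead dissolves the problem by construction: at each break it samples a uniform ratio $p$ and a residue class $r$ \emph{independently}, and defines the cut point as the $(\lfloor mp\rfloor+1)$-th element of the class $r$ in $[1,\ell-1]$; survival then depends only on the $r$'s, so conditioning on the entire survival pattern of a lineage leaves the $p_i$ exactly i.i.d.\ uniform, and all that remains is the deterministic bound $|X/\ell-p|\le (n+1)/\ell$ of \Cref{lem:ratiosErrorBound}. The two are equivalent in content, but the paper's coupling avoids any equidistribution-under-conditioning argument. Your variance bookkeeping (i.i.d.\ subtrees, truncated critical total progeny with second moment $O((\log L)^3)$, Chebyshev plus a ratio estimate) also differs from the paper's, which counts pairs of leaves with a recent common ancestor against the lower bound $M\ge R\sqrt{\log L}$ of \Cref{cor:numStickLower}; both routes yield $\Var[P(s)]=O((\log L)^{C}/R)$, and it is that lower bound (holding with probability $1-4(\log L)^{5/2}/R$) where the paper actually consumes $R>(\log L)^3$. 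Two small inaccuracies to repair in a write-up: the maximal depth of the forest is $O(\log L)$ with high probability but not $\log_2 L+O(1)$ (lineages whose ratios stay near $1$ survive longer, so you need a union bound over the $O(R\log L)$ lineages), and the asymptotic criticality $\E[A_{t+1}]=\E[A_t](1+O(n/\ell))$ must be propagated only along sticks of controlled length, which is exactly what the inductive \Cref{lem:live_stick_lower_bound} does.
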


We also prove non-Benfordness results when $|S|\neq n/2$ and make more specific conjectures in \Cref{sec:discreteNonBenford}. In \Cref{sec:discreteGeneralParts}, we discuss generalizing the process to one where the number of parts each stick is broken into is a chosen integer $k\geq 2$.

\begin{remark}
    Given that our results involve random decomposition of residue classes modulo a given integer, they could be of number theoretic interest. One can ask further whether similar results hold when $\mathfrak{S}$ is given by other subsets of $\Z_+$ that arise in number theory, for example, the set of quadratic residues modulo a given integer, the set of primes or practical numbers, etc. 
\end{remark}

We note that the only property of $\mathfrak{S}$ that is fundamentally necessary in our above work involving discrete breaking processes seems to be the density of the set in the natural numbers. As result, we have the following conjecture:

\begin{conjecture}
    Let $\mathfrak{S}$ be such that the limit below exists and let
    \begin{equation}
        r = \lim_{n \to \infty} \frac{|\{[1,n] \cap \mathfrak{S}\}|}{n}.
    \end{equation}
    Moreover, assume that $r > 0$. Then, we have that set of dead stick lengths approaches Benford behavior if and only if $r = 1/2$.
\end{conjecture}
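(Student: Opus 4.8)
\emph{A line of attack.} The plan is to extend the machinery behind \Cref{thm:evenStopBreaking} and \Cref{thm:discreteStopAtHalfResidues}. As in \Cref{thm:discreteStopAtHalfResidues}, the natural statement is in the aggregated limit: start with $R$ sticks of length $L$ with $R$ a suitable power of $\log L$, view the dead stick lengths as a random empirical measure $\nu_{L,R}$ on $(0,\infty)$, and prove that the pushforward of $\nu_{L,R}$ under $x\mapsto \log_B x \bmod 1$ converges in probability to the uniform law on $[0,1]$ precisely when $r=1/2$. Record the genealogical (full binary) tree of the process: internal nodes are sticks that were cut, leaves are dead sticks, and a dead stick at depth $t$ has length $L\prod_{i=1}^t \rho_i$, where $\rho_i\in(0,1)$ is the cut ratio of its $i$-th ancestor. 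A stick of large length $\ell$ is cut uniformly, so each of its two children lies in $\mathfrak S$ with probability $\to r$ by the density hypothesis, and hence the expected number of \emph{surviving} children is $\to 2(1-r)$. Thus $r=1/2$ is exactly the \emph{critical} regime $2(1-r)=1$: the discrete counterpart of the hypothesis $r=1/k$ in \Cref{conj:contbreakwstopratio}, and the whole dichotomy mirrors \Cref{conj:contbreakwstopratio}, \Cref{thm:contR>1/k} and \Cref{thm: contbreaks_dependent_r_small}.

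For the positive direction, assume $r=1/2$. First observe that the density hypothesis already forces the cut ratio $\rho=j/\ell$ of a surviving child of a size-$\ell$ stick to converge weakly to $\mathrm{Unif}(0,1)$: for fixed $x\in(0,1)$,
\[
\prob(\rho\le x\mid \text{child survives})\ =\ \frac{|[1,x\ell]\setminus\mathfrak S|}{|[1,\ell]\setminus\mathfrak S|}\ \longrightarrow\ \frac{x(1-r)}{1-r}\ =\ x ,
\]
and the same holds for the size-biased ratio appearing along the spine of the tree, with a limit that is again a \emph{good} distribution in the sense of \Cref{sec:mellinTransformCondition}. Criticality then controls the shape of the tree: the number of living sticks at each generation is an approximately conserved quantity, so dead sticks are produced at a roughly constant rate over the $\asymp\log L$ generations it takes for lengths to be ground down to $O(1)$; consequently a uniformly chosen stick in $\nu_{L,R}$ has depth of order $\log L$ with the bulk of the mass on depths tending to infinity, i.e.\ it is a product of a number of (asymptotically) good, independent factors that tends to infinity. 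The Mellin/Fourier criterion of \cite[Theorem 1.1]{JKKKM}, fed by the sufficient condition established in \Cref{sec:mellinTransformCondition}, then gives $\E[P(s)]\to\log_B s$, and the variance is killed by averaging over the $R$ independent roots exactly as in \Cref{thm:discreteStopAtHalfResidues}.

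For the negative direction, suppose $r\ne 1/2$ and show $\nu_{L,R}$ cannot equidistribute mod $1$. If $r>1/2$ the branching is subcritical: from each root the expected number of dead sticks is finite and concentrated on bounded depth, so $\nu_{L,R}$ is asymptotically a finite mixture of the depth-$t$ laws; taking $L$ along powers of $B$, the depth-$1$ law is essentially the uniform law on $\mathfrak S\cap[1,L]$, whose significand law tends to $s\mapsto (s-1)/(B-1)$, while the higher-depth laws concentrate their mantissas near $1$ for large $B$, so $\E[P(\sqrt B)]\to 0\ne \tfrac12=\log_B\sqrt B$ once $B$ is large --- the mechanism of \Cref{thm: contbreaks_dependent_r_small}. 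If $r<1/2$ the branching is supercritical with fixed mean $\mu=2(1-r)>1$: the tree grows until it saturates against the constraint that the dead stick lengths are positive integers summing to $L$, which forces $\Theta(L)$ dead sticks of average length $O(1)$; the family $\{\nu_{L,R}\}$ is then tight on $\mathbb Z_+$ and any subsequential limit is a nondegenerate atomic measure on $\mathbb Z_+$ (degenerating to the point mass at $1$ as $\mathfrak S$ becomes sparse), which is patently non-Benford. In both cases one extracts a quantitative lower bound on $\sup_s|\E[P(s)]-\log_B s|$ that persists in the limit.

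The main obstacle is twofold. First, the two children $j$ and $\ell-j$ of a cut of $\{1,\dots,\ell-1\}$ are not independent, and the events ``$j\in\mathfrak S$'' and ``$\ell-j\in\mathfrak S$'' are correlated through $\ell$ and the arithmetic of $\mathfrak S$; for congruence stopping sets this correlation is governed entirely by $\ell\bmod n$ and is harmless (which is why \Cref{thm:evenStopBreaking} and \Cref{thm:discreteStopAtHalfResidues} go through cleanly), but for a general density-$1/2$ set one must argue that it does not distort the offspring law or the spine ratio in the limit. Second, and more seriously, the aggregated-limit variance bound needs the convergences above with a \emph{rate} that is uniform in $\ell$ (this is what the threshold $R>(\log L)^3$ in \Cref{thm:discreteStopAtHalfResidues} is buying), whereas bare existence of the density only gives the limits pointwise; a general $\mathfrak S$ with $r=1/2$ cannot contain long runs (a run of relative length $\varepsilon$ would push the partial density above $1/2$ at its right endpoint), but it can still oscillate within windows $[a,2a]$, so whether plain density suffices or one must add a Cesàro/equidistribution hypothesis --- and showing any such hypothesis is also necessary --- is the crux of turning the conjecture into a theorem.
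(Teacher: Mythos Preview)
The statement you are addressing is a \emph{conjecture} in the paper; there is no proof in the paper to compare against. What the paper does prove are the special cases where $\mathfrak S$ is a union of residue classes (\Cref{thm:evenStopBreaking}, \Cref{thm:discreteStopAtHalfResidues}, \Cref{thm:|S|LessThanN/2}, \Cref{thm:|S|GreaterThanN/2}), and your sketch is essentially a faithful abstraction of those arguments: couple to a continuous process, use criticality of the offspring mean $2(1-r)$ to control the generation profile, feed the spine ratios into the Mellin criterion for the $r=1/2$ direction, and for $r\ne 1/2$ reproduce the sub/supercritical dichotomy that the paper establishes in \Cref{thm:|S|LessThanN/2} and \Cref{thm:|S|GreaterThanN/2}. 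In that sense your proposal is not a different route so much as a statement of what one \emph{would} have to do, and you correctly flag the two places where the congruence-class proofs genuinely use more than bare density.

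That said, a few of the steps you present as routine are not. First, your spine computation $\prob(\rho\le x\mid\text{child survives})=|[1,x\ell]\setminus\mathfrak S|/|[1,\ell-1]\setminus\mathfrak S|$ only tracks one designated child; to get the law of the ratio along the ancestral line of a \emph{uniformly chosen} dead leaf you need the size-biased spine, and the bias here is by subtree size, which is itself a random functional of the (length-dependent) offspring law. In the congruence case this closes because the offspring law depends only on $\ell\bmod n$, but for general $\mathfrak S$ it is not obvious that the spine ratio has a limit at all, let alone a good one. Second, your $r>1/2$ argument reaches the same conclusion as \Cref{thm:|S|GreaterThanN/2} and shares its limitation: it only rules out Benfordness for sufficiently large base $B$, so even in the congruence case the ``only if'' direction is not known for all bases. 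Finally, you are right that the density hypothesis alone gives no rate, and the paper's proofs (e.g.\ the threshold $R>(\log L)^3$ and \Cref{lem:live_stick_lower_bound}) lean on the $O(1/\ell)$ error available for residue classes; your closing paragraph is accurate that deciding whether density suffices, or whether an equidistribution-type strengthening is required, is exactly the open content of the conjecture. What you have written is a sound roadmap, not a proof, and you say as much.
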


In the next section, we briefly review the Mellin transform condition and prove a practical sufficient criterion (\Cref{thm:Mellin_transform_condition_continuity}) for it to hold. A family of examples satisfying the Mellin transform condition that follow from that criterion is given in \Cref{ex:condition_for_goodness}. 
Subsequent sections include the proofs of our main results and further discussions on our conjectures.

\section{Preliminaries}

\subsection{Mellin transform condition}
\label{sec:mellinTransformCondition}
Becker et al. \cite{B+} gave a proof of the Benfordness of the basic process described at the beginning of the section, given that the distribution $\mathcal{D}$ satisfy a certain condition involving the convergence of a sum of products of its Mellin transform. This condition was proposed by Jang et al. in \cite[Theorem 1.1]{JKKKM}. We restate it precisely as follows.

For a continuous real-valued function $f:[0,\infty)\to\R$, let $\mathcal{M}f$ denote its \emph{Mellin transform} defined by
\begin{equation}
    \mathcal{M}f(s)\ =\ \int_{0}^\infty f(x)x^s\frac{dx}{x}. 
\end{equation}

Let $\ef = \{\mathcal{D}_j\}_{j\in I}$ be a family of probability distributions with associated density functions $f_j$ supported on $[0,\infty)$ and $p: \Z_+ \to I$. We say that $\ef$ satisfies the \emph{Mellin transform condition} if the following holds and the convergence is uniform over all choices of $p$:
\begin{equation}
    \label{eq:mellincondition}\lim_{n\to\infty}\sum_{\underset{\ell\neq 0}{\ell = -\infty}}^{\infty}\prod_{m=1}^n\mathcal{M}f_{\mathcal{D}_{p(m)}}\left(1-\frac{2\pi i\ell}{\log B}\right)\ =\ 0.
\end{equation}
The following corollary of \cite[Theorem 1.1 \& Lemma 1.2]{JKKKM} relating the Mellin transform property to Benford behavior will be used repeated in our proofs of Benfordness results, so we restate it here for ease of reference.
\begin{theorem}[{\cite[Theorem 1.1]{JKKKM}}]
\label{thm:productConvergeToBenford}
    Let $\ef = \{\mathcal{D}_j\}_{j\in I}$ be a family of probability distributions with associated density functions $f_j$ supported on $[0,\infty)$ satisfying the Mellin transform property and $p: \Z_+ \to I$. Let $X_1\sim\mathcal{D}_{p(1)}$. For all $i\geq 2$, let $X_i$ be a random variable with probability density function given by
    \begin{equation}
    \theta^{-1}f_{\mathcal{D}_{p(i)}}(x/\theta)
    \end{equation}
    where $\theta$ is the value of the previous random variable $X_{i-1}$. Then if $Y_n = \log_B X_n$, we have
    \begin{equation}
    \begin{split}
    |&\prob(Y_n\mod 1\in [a,b])-(b-a)|
    \\
    &\leq\ (b-a)\cdot\left|\lim_{n\to\infty}\sum_{\underset{\ell\neq 0}{\ell = -\infty}}^{\infty}\prod_{m=1}^n\mathcal{M}f_{\mathcal{D}_{p(m)}}\left(1-\frac{2\pi i\ell}{\log B}\right)\right|.
    \end{split}
    \end{equation}
    In particular, the limiting distribution as $n\to\infty$ of $X_n$ is Benford base $B$.
\end{theorem}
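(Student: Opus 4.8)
The plan is to realize the recursively defined chain as a product of independent variables and then carry out the Fourier analysis on the circle that underlies \cite{JKKKM}. First I would note that the density $\theta^{-1} f_{\mathcal{D}_{p(i)}}(x/\theta)$ with $\theta = X_{i-1}$ is exactly the density of $X_{i-1}\cdot U_i$ where $U_i \sim \mathcal{D}_{p(i)}$ is independent of $X_{i-1}$; hence $X_n$ has the law of $\prod_{m=1}^n U_m$ with the $U_m$ independent, and $Y_n = \log_B X_n = \sum_{m=1}^n \log_B U_m$. By \Cref{def:StrongBenfordForD}, the assertion that the limiting law of $X_n$ is Benford base $B$ is precisely the statement that the mantissa $M_B(X_n) = Y_n \bmod 1$ converges to the uniform law on $[0,1]$, so it suffices to control $|\prob(Y_n \bmod 1 \in [a,b]) - (b-a)|$ and show it tends to $0$.

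Second, I would identify the Fourier coefficients of the push-forward of the law of $X_n$ to $\R/\Z$ under $x\mapsto \log_B x \bmod 1$. For $\ell \in \Z$ this coefficient is $\E\!\left[e^{2\pi i \ell Y_n}\right] = \E\!\left[X_n^{2\pi i \ell/\log B}\right]$, which by independence of the $U_m$ factors as $\prod_{m=1}^n \E\!\left[U_m^{2\pi i \ell/\log B}\right]$. Since $\mathcal{M}f(s) = \int_0^\infty f(x)\,x^{s-1}\,dx = \E[X^{s-1}]$ for $X$ with density $f$, each factor equals $\mathcal{M}f_{\mathcal{D}_{p(m)}}\!\left(1 + 2\pi i \ell/\log B\right)$, and after reindexing $\ell\mapsto -\ell$ in the sum below this is the quantity appearing in \eqref{eq:mellincondition}. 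The $\ell=0$ coefficient is $1$ since each $f_j$ integrates to $1$.

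Third, writing $g_n$ for the resulting density on $[0,1)$ --- which is represented by its Fourier series once $\sum_{\ell}\bigl|\widehat{g}_n(\ell)\bigr|<\infty$ for each fixed $n$, a finiteness that is implicit in the Mellin transform condition --- I would expand
\[
\prob(Y_n \bmod 1 \in [a,b]) - (b-a) \ = \ \sum_{\ell \neq 0} \widehat{g}_n(\ell)\int_a^b e^{2\pi i \ell y}\,dy ,
\]
use $\left|\int_a^b e^{2\pi i \ell y}\,dy\right| \le b-a$, and obtain the bound $(b-a)\sum_{\ell\neq 0}\bigl|\prod_{m=1}^n \mathcal{M}f_{\mathcal{D}_{p(m)}}(1 - 2\pi i\ell/\log B)\bigr|$. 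The Mellin transform condition, and in particular its uniformity over all $p$, then forces this to $0$ as $n\to\infty$, which yields both the quantitative estimate and the conclusion that $X_n$ becomes Benford base $B$.

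The Fourier bookkeeping is routine; the one genuinely delicate point is justifying that $Y_n \bmod 1$ has a density equal to the sum of its Fourier series and that the interchange of $\sum_\ell$ with $\int_a^b$ above is legitimate --- both rest on absolute summability of $\widehat{g}_n(\ell)$ in $\ell$ for fixed $n$, i.e.\ on sufficient decay of $\mathcal{M}f_j(1+it)$ in $t$, which is the content of \cite[Lemma 1.2]{JKKKM}. The other point worth emphasizing, and the reason the theorem is stated with a \emph{family} $\ef$ and an arbitrary index map $p$ rather than a single distribution, is that in every one of our fragmentation models the sequence of ancestor distributions along a given stick is itself random and path-dependent; the uniformity built into \eqref{eq:mellincondition} is exactly what allows the estimate to be applied simultaneously along all realizations.
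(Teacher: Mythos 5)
Your argument is correct and is the standard Fourier-analytic proof of this result; note that the paper does not prove this theorem at all but imports it verbatim from \cite{JKKKM} (citing both Theorem 1.1 and Lemma 1.2 there, the latter being exactly the absolute-summability input you correctly identify as the one delicate point). Your three steps --- realizing the chain as a product of independent $U_m\sim\mathcal{D}_{p(m)}$, computing $\E[e^{2\pi i\ell Y_n}]=\prod_m \mathcal{M}f_{\mathcal{D}_{p(m)}}(1+2\pi i\ell/\log B)$, and bounding $\bigl|\int_a^b e^{2\pi i\ell y}\,dy\bigr|\le b-a$ --- are precisely the mechanism behind the cited result. One small observation your derivation makes visible: the inequality as restated here places $\lim_{n\to\infty}$ inside the right-hand side, which would force the bound to be $0$ for every finite $n$ under \eqref{eq:mellincondition}; the bound you actually derive, with $\sum_{\ell\neq 0}\bigl|\prod_{m=1}^n\mathcal{M}f_{\mathcal{D}_{p(m)}}(1-2\pi i\ell/\log B)\bigr|$ at level $n$ and the limit taken afterwards, is the correct form and matches the source.
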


Note that from the way $X_n$ is defined, it precisely models the product of $n$ random variables, each distributed according to $\mathcal{D}_{p(i)}$ for $1\leq i\leq n$.
The following gives a general condition on $\ef$ for it to satisfy the Mellin transform condition. A weaker version of the result is briefly discussed in \cite{JKKKM}.

\begin{theorem} \label{thm:Mellin_transform_condition_continuity}
    $\ef$ satisfies the Mellin transform condition if it is finite and all $f_j\in\ef$ are $\alpha_j$-H\"older continuous $(0 < \alpha_j \le 1)$ and supported only on $[0,1]$. In particular, for such an $\ef$, a sequence of products of random variables distributed according to some sequence of the $f_{j}\in\ef$ approaches Benford behavior, and the rate of this convergence is uniform over all such sequences.
\end{theorem}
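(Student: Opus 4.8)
The plan is to derive the Mellin transform condition \eqref{eq:mellincondition} from a single quantitative ingredient --- a Riemann--Lebesgue-type decay rate for $\mathcal{M}f_j(1-it)$ as $|t|\to\infty$ --- and then to split the sum over $\ell$ into a bounded block of ``low frequencies'' and a ``high frequency'' tail. Throughout write $t:=2\pi\ell/\log B$, so the relevant factor is $\mathcal{M}f_j(1-it)=\int_0^1 f_j(x)x^{-it}\,dx$; since each $f_j$ is a probability density we have the crude bound $|\mathcal{M}f_j(1-it)|\le\int_0^1 f_j=1$ for all real $t$, which already suffices to control the product once we know something strictly better at the finitely many frequencies that matter.

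First I would prove the decay estimate. Substituting $x=e^{-u}$ rewrites $\mathcal{M}f_j(1-it)$ as $\int_0^\infty g_j(u)e^{itu}\,du$, where $g_j(u):=f_j(e^{-u})e^{-u}$ (extended by $0$ to $u<0$). Since $u\mapsto e^{-u}$ is bounded and $1$-Lipschitz on $[0,\infty)$ and $f_j$ is $\alpha_j$-H\"older on $[0,1]$, the restriction of $g_j$ to $[0,\infty)$ is $\alpha_j$-H\"older and bounded, with $g_j(u)=O(e^{-u})$. The standard translation trick (replace $u$ by $u+\pi/t$, picking up $e^{i\pi}=-1$; by conjugation symmetry take $t>0$) gives $|\mathcal{M}f_j(1-it)|\le\tfrac12\int_{\mathbb{R}}|g_j(u)-g_j(u+\pi/t)|\,du$; I would split this into the boundary layer $[-\pi/t,0]$ (where a possible jump of $g_j$ at $0$ contributes $O(1/|t|)$), a bulk block $[0,M]$ (where H\"older continuity contributes $O(M|t|^{-\alpha_j})$), and the exponential tail $[M,\infty)$ (contributing $O(e^{-M})$), and optimize $M\asymp\log|t|$. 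This yields
\[
\Bigl|\mathcal{M}f_j\bigl(1-\tfrac{2\pi i\ell}{\log B}\bigr)\Bigr|\ \le\ C_j\,\frac{\log(2+|\ell|)}{|\ell|^{\alpha_j}}\qquad(\ell\ne0),
\]
with $C_j$ depending only on $f_j$ and $B$.

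Next I would treat the low frequencies. For a fixed $\ell\ne0$, equality in $|\int_0^1 f_j x^{-it}|\le\int_0^1 f_j$ would force $x\mapsto x^{-it}$ to be constant on $\{f_j>0\}$, which has nonempty interior because $f_j$ is a continuous density; this is impossible when $t\ne0$, so $|\mathcal{M}f_j(1-2\pi i\ell/\log B)|<1$ strictly for every $j$ and every $\ell\ne0$. Put $\alpha:=\min_j\alpha_j>0$ and $C:=\max_j C_j$ (finite since $\ef$ is finite), so the displayed estimate gives $|\mathcal{M}f_j(1-2\pi i\ell/\log B)|\le C\log(2+|\ell|)|\ell|^{-\alpha}$ for all $j$ and all $\ell\ne0$; choose $\ell_0$ so large that this upper bound is $\le\tfrac12$ once $|\ell|>\ell_0$, and set $\rho:=\max\{\,|\mathcal{M}f_j(1-2\pi i\ell/\log B)|:0<|\ell|\le\ell_0,\ \mathcal{D}_j\in\ef\,\}$, a maximum of finitely many numbers each strictly below $1$, so $\rho<1$. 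Then for any $p$ and $n$ I would bound $\sum_{\ell\ne0}\bigl|\prod_{m=1}^n\mathcal{M}f_{\mathcal{D}_{p(m)}}(1-2\pi i\ell/\log B)\bigr|$ by splitting at $|\ell|=\ell_0$: the $|\ell|\le\ell_0$ part is $\le 2\ell_0\rho^n$, and, choosing $n_0$ with $n_0\alpha>1$ so that $K:=\sum_{|\ell|>\ell_0}\bigl(C\log(2+|\ell|)|\ell|^{-\alpha}\bigr)^{n_0}<\infty$, the tail for $n\ge n_0$ is $\le K\,2^{-(n-n_0)}$ because each tail factor is $\le\tfrac12$. Hence the sum is $\le 2\ell_0\rho^n+K\,2^{-(n-n_0)}\to0$ with all constants independent of $p$, which is exactly \eqref{eq:mellincondition}; the ``in particular'' statement then follows from \Cref{thm:productConvergeToBenford}.

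The main obstacle is the decay estimate of the second paragraph: the textbook fact that H\"older functions have polynomially decaying Fourier transforms does not apply verbatim, since the change of variables $x=e^{-u}$ produces a function of non-compact support (handled by its exponential tail) and a possible jump at the origin coming from $f_j(1)\ne0$ (handled by the $O(1/|t|)$ boundary term, which is dominated by $\log(2+|\ell|)|\ell|^{-\alpha_j}$ because $\alpha_j\le1$). Everything after that is bookkeeping, made uniform in $p$ by the finiteness of $\ef$.
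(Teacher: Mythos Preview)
Your proof is correct and follows essentially the same route as the paper's: both use the Fourier-side translation trick to extract polynomial decay of $\mathcal{M}f_j(1-it)$ from H\"older continuity, and then sum over $\ell$. Two minor differences are worth flagging. First, your substitution $x=e^{-u}$ lands on $[0,\infty)$ rather than back on $[0,1]$ as in the paper, which costs you a harmless $\log|\ell|$ factor from the exponential tail but lets you isolate and control the possible jump at $f_j(1)\ne 0$ explicitly; the paper's $\sup$-bound over $[0,1]$ tacitly treats the extension of $f_j$ by zero as still H\"older there, which strictly speaking needs $f_j(1)=0$ (the missing boundary contribution is $O(1/|\ell|)$ and is absorbed by $O(|\ell|^{-\alpha})$, so the conclusion is unaffected). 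Second, your low/high-frequency split at $\ell_0$ with the uniform constants $\rho<1$ and $K$ makes the uniformity in $p$ immediate, whereas the paper obtains it via a pigeonhole count of how often some fixed $j\in I$ must occur among $p(1),\dots,p(n)$.
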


Consider a probability distribution $\mathcal{D}$ on $\mathbb{R}^m$ that is supported on $[0,1]^m$ with cumulative distribution function $F$. For $X\sim\mathcal{D}$, Let $\mathrm{rk}_i(X)$ denote its $i$th smallest coordinate, where $1\leq i \leq m$. Let $\mathrm{rk}_0(X) = 0$ and $\mathrm{rk}_{m+1}(X) = 1$. Then, we say that $\mathcal{D}$ is \emph{good} if 
\begin{equation}
    Y_i\ =\ \mathrm{rk}_{i+1}(X) - \mathrm{rk}_i(X)
\end{equation}
has H\"older continuous density for all $0 \le i \le m$. In other words, if $X$ represents the cut points of a stick, then we require the distances between adjacent ones to have H\"older continuous densities. This definition is necessary for exploring stick breaking, in which we must choose multiple breaking points of a stick from a distribution and then consider distributions of ratios between the lengths of children and their parents. That is, if such a distribution is \emph{good}, then \Cref{thm:Mellin_transform_condition_continuity} applies. In fact, many distributions of interest are \emph{good}. For instance, we have the following family of examples.

\begin{example} \label{ex:condition_for_goodness}
    Suppose that $\mathcal{D}$ is the product of $m$ independent $1$-dimensional distributions $\mathcal{D}_i$ with densities $f_i$ and cumulative densities $F_i$. If the $f_i$ are H\"older continuous, then $\mathcal{D}$ is good.
    \begin{proof}
Let $Y_i = \mathrm{rk}_{i+1}(X) - \mathrm{rk}_i(X)$ for some $X \sim D$. Assume that $1 \le i < m$. Then
\begin{equation}
    1 - F_{Y_i}(c) \ = \ \sum_{j=1}^m \sum_{\substack{S \subseteq [m] \setminus \{j\} \\ |S| = i-1}} \int_0^{1} f(x) \prod_{l \in S} F_l(x) \prod_{l \not\in S, \ l \neq j} (1 - F_l(x+c)) \ dx
\end{equation}
where we sum over the possible $X_j$ that correspond to $\mathrm{rk}_i(X)$ and the possible sets $S$ of the other variables that are less than $X_j$. By continuity, we can differentiate with respect to $c$ and move the differentiation inside of the integral to obtain
\begin{equation}
    f_{Y_i}(c) \ = \ \sum_{j=1}^m \sum_{\substack{S \subseteq [m] \setminus \{j\} \\ |S| = i-1}} \int_0^{1} f(x) \prod_{l \in S} F_l(x) \sum_{l' \not\in S, \ l' \neq j} f_l(x+c) \prod_{l \not\in S, \ l \neq j,l'} (1 - F_l(x+c)) \ dx.
\end{equation}
Now, the $F_i$ are continuously differentiable, so the are also H\"older continuous. We can then take the minimal exponent $\alpha$ among the the $f_i$ to obtain that $f_{Y_i}$ is $\alpha$-H\"older continuous since sums and products of $\alpha$-H\"older continuous functions are $\alpha$-H\"older continuous. We can similarly show that $f_{Y_i}$ is $\alpha$-H\"older continuous when $i = 0, m$.
    \end{proof}
\end{example}

From now on, we assume all distributions from which breaking points are sampled are \emph{good}.

\subsection{Proof of continuity criterion for the Mellin transform condition }
\begin{proof}[Proof of \Cref{thm:Mellin_transform_condition_continuity}]
Note that, for fixed $j \in I$,
\begin{align}
    \mathcal{M}f_j\left(1-\frac{2\pi i\ell}{\log B}\right) &\ = \nonumber \ \int_{0}^\infty f_j(x) x^{-\frac{2\pi i\ell}{\log B}} dx \\ \nonumber
    &\ = \ \int_0^\infty f_j(e^{\log x}) e^{\log x} e^{-\frac{2\pi i\ell}{\log B} \log x} \frac{dx}{x} \\ \nonumber
    &\ = \ \int_{-\infty}^\infty g_j(y) e^{-\frac{2\pi i\ell}{\log B} y} dy \\ 
    &\ = \ \widehat{g_j}\left( \frac{\ell}{\log B} \right),
\end{align}
where $g_j(y) = f_j(e^{y})e^y$. Moreover, $\|g_j\|_1 = \|f_j\|_1 = 1$, so the Riemann-Lebesgue lemma applies and says that 
\begin{equation}
    \mathcal{M}f_j\left(1-\frac{2\pi i\ell}{\log B}\right) \ \to \ 0
\end{equation}
as $\ell \to \infty$. Also, for $\ell \neq 0$,
\begin{equation}
    \left| \widehat{g_j}\left( \frac{\ell}{\log B} \right) \right| \ \le \ \|g_j\|_1 \ = \ 1.
\end{equation}
We do not have equality in the above since it follows from triangle inequality and the integrand does not always have the same complex argument (since $g_j$ is continuous). Thus, if we take
\begin{equation}
    h(\ell) \ = \ \max_j \left| \mathcal{M}f_j\left(1-\frac{2\pi i\ell}{\log B}\right) \right|,
\end{equation}
we have that $h(\ell) < 1$ for $\ell \neq 0$ and also $h(\ell) \to 0$ as $\ell \to \infty$. We now investigate the rate of this convergence. We begin by mimicking the proof of the Riemann-Lebesgue lemma.
For any $f:\R\to\C$ continuous and compactly supported, using the substitution $x\mapsto x+\frac{\pi}{\xi}$ for $\xi\neq 0$, we have
\begin{equation}
     {\hat {f}}(\xi )\ =\ \int _{\mathbb {R} }f(x) {e} ^{- {i} x\xi } {d} x\ =\ \int _{\mathbb {R} }f\left(x+{\frac {\pi }{\xi }}\right) {e} ^{- {i} x\xi } {e} ^{- {i} \pi } {d} x\ =\ -\int _{\mathbb {R} }f\left(x+{\frac {\pi }{\xi }}\right) {e} ^{- {i} x\xi } {d} x.
\end{equation}
Taking the average, we get
\begin{equation}
     |{\hat {f}}(\xi )|\ \leq\ {\frac {1}{2}}\int _{\mathbb {R} }\left|f(x)-f\left(x+{\frac {\pi }{\xi }}\right)\right| {d} x.
\end{equation}
Apply this to $f=\widehat{g_j}$ and $\xi =  \frac{\ell}{\log B} $,
\begin{align}
    \left|\widehat{g_j}\left( \frac{\ell}{\log B} \right)\right| \nonumber
    &\ \leq\ \frac{1}{2}\int_{\R}\left|g_j(x)-g_j\left(x+\frac{\pi\log B}{\ell}\right)\right|dx\\ \nonumber
    &\ \leq\ \frac{1}{2}\int_{\R}\left|f_j(e^x)e^x-f_j(e^{x+\frac{\pi\log B}{\ell}})e^{x+\frac{\pi\log B}{\ell}}\right|dx\\ \nonumber
    &\ \leq\ \frac{1}{2}\int_{0}^1\left|f_j(u)-cf_j(cu)\right|du\\
    &\ \leq\ \frac{1}{2}\sup_{[0,1]}(|f_j(u)-f_j(cu)|+|f_j(cu)-cf_j(cu)|),      
\end{align}
where $c=e^{\frac{\pi\log B}{\ell}}$ and we used the fact that $f$ is only supported on $[0,1]$ (this can be easily changed to any compact interval, but for the purpose of this paper all the distributions we consider satisfy this condition).
From the assumption that $f_j$ is $\alpha_j$-H\"older continuous, there exists a constant $\mu\geq 0$ such that 
\begin{equation}
    |f_j(u)-f_j(cu)| \ \leq \ \mu|(1-c)u|^\alpha \ \leq \ \mu|1-c|^\alpha
\end{equation}
and
\begin{equation}
    |f_j(cu)-cf_j(cu)|\ \leq \ (1-c)|f_j(cu)| \ \leq \ |1-c|M
\end{equation}
for all $u\in [0,1]$,
where $M>0$ is an upper bound for $f$.
Now 
\begin{equation}
    c\ =\ 1 + \frac{\pi\log B}{\ell} + o(1/\ell),
\end{equation}
 so
\begin{equation}
|1 - c|\ =\  \frac{\pi\log B}{\ell} + o(1/\ell).
\end{equation}
We may assume $0<\alpha\leq 1$, so that $|1-c|^\alpha$ dominates. There exists some $L$ large enough so that the sum $\sum_{|\ell|\geq L}\ell^{-n\alpha}\to 0$ as $n\to\infty$. 
By the pigeonhole principle, there exist a $j\in I$ such that $|p^{-1}(\{j\})|=\infty$. Then we have
\begin{align}
    \sum_{\underset{\ell\neq 0}{\ell = -\infty}}^{\infty}\prod_{m=1}^n\mathcal{M}f_{\mathcal{D}_{p(m)}}\left(1-\frac{2\pi i\ell}{\log B}\right) \nonumber
    &\ \leq\ 
    \sum_{\underset{\ell\neq 0}{\ell = -\infty}}^{\infty}\prod_{\underset{p(m)=j}{m=1}}^n\mathcal{M}f_j\left(1-\frac{2\pi i\ell}{\log B}\right)\\
    &\ \leq\ 
    \sum_{\underset{\ell\neq 0}{\ell = -\infty}}^{\infty}\mathcal{M}f_j\left(1-\frac{2\pi i\ell}{\log B}\right)^n
    \ \to \ 0
\end{align}
as $n\to\infty$. This proves \eqref{eq:mellincondition}. To see that the convergence is uniform over all $p$, note that by the pigeonhole principle, for any choice of $p$ and any positive integer $N$ there exists a $j\in I$ such that $|p^{-1}(\{j\})\cap\{1,\dots,N\}|\ \geq\ N/|I|$. Therefore for any $\epsilon>0$, it suffices to take the maximum among all the $N$'s needed for each $j\in I$ so that 
\begin{equation}
    \sum_{\underset{\ell\neq 0}{\ell = -\infty}}^{\infty}\mathcal{M}f_j\left(1-\frac{2\pi i\ell}{\log B}\right)^{N/|I|} \ < \ \epsilon.
\end{equation}    
\end{proof}

\section{Continuous fragmentation with random number of parts}
In this section, we present proofs of the results on the generalizations of the basic model by allowing the number of parts to be randomly chosen. For both \Cref{thm: continuous_breaking_levels} and \Cref{thm: continuous_breaking_sticks}, we show that 
\begin{equation}
        \lim_{N\to\infty} \E[P_N(s)] \ = \ \log_B(s)
\end{equation} and
\begin{equation}
    \label{eq:variance}
    \lim_{N\to\infty} \Var[P_N(s)] \ = \ 0
\end{equation}
where $N$ is the number of levels.

\subsection{Proof of Theorem \ref{thm: continuous_breaking_levels}}
We first assume that $\prob(k=1) = 0$, i.e., that $k$ is chosen from $\{2,\dots,m\}$. Then, the problem is solved similarly as in Section 2 of \cite{B+}. Let $Y_i$ be the number of sticks that each stick is cut into at level $i$. For simplicity, we assume $i$ always runs through $\{1,\dots, N\}$ and omit the range in the equations below. Note that
\begin{equation}
\E(P_N(s)) \ = \ \sum_{(y_i) \in \{2,\dots,m\}^N} \E(P_N(s) | Y_i = y_i \ \forall i)\prob(Y_i = y_i \ \forall i),
\end{equation}
so it suffices to show that 
\begin{equation}\E(P_N(s) | Y_i = y_i \ \forall i)\ =\ \E(\varphi_s(p_1p_2 \cdots p_N))\ \to\ \log_B(s)\end{equation} 
at a uniform rate for any choice of splittings $(Y_i)=(y_i)$ as $N \to \infty$, where $p_i\sim \mathcal{D}_i$ with $\mathcal{D}_i\in\ef_{y_i}$. This follows immediately from \Cref{thm:Mellin_transform_condition_continuity}, given our assumption that the distributions in $\ef_k$ are \emph{good} and $|\ef_k|<\infty$ for all $k$. 
The variance can also be bounded independently of the values of the variables $Y_i$. Let $M = y_1y_2 \cdots y_N$ be the number of sticks after $N$ levels. We have
\begin{equation}
    P_N(s) \ = \ \frac{1}{M}\sum_{i=1}^M \varphi_s(X_i)
\end{equation}
so that
\begin{align}
    \Var(P_N(s)) \ &= \ \E[P_N(s)^2] - \E[P_N(s)]^2 \nonumber \\
    &= \ \E \left[ \frac{1}{M^2}\sum_{i=1}^M \varphi_s(X_i)^2 + \frac{1}{M^2}\sum_{\substack{i\neq j \nonumber \\ 1 \le i,j \le M}} \varphi_s(X_i)\varphi_s(X_j) \right] - \E[P_N(s)]^2 \\ 
    &= \ \frac{1}{M^2}\E[P_N(s)] + \frac{1}{M^2}\sum_{\substack{i\neq j \\ 1 \le i,j \le M}} \E[\varphi_s(X_i)\varphi_s(X_j)] - \E[P_N(s)]^2. \label{1_var_expansion}
\end{align}
It therefore suffices to show that 
\begin{equation}
    \frac{1}{M^2}\sum_{\substack{i\neq j \\ 1 \le i,j \le M}} \E[\varphi_s(X_i)\varphi_s(X_j)] \ \to \ \log^2_B(s)
\end{equation}
as $N \to \infty$ (since the first term in \eqref{1_var_expansion} goes to $0$ and $\E[P_N(s)]^2 \to \log^2_B(s)$ by the above). By the same reasoning as in the proof of  \cite[Theorem 1.5]{B+}, if
\begin{equation*}
X_i \ = \ Lp_1p_2 \dots p_{N-n-1}p_{N-n} \dots p_N,
\end{equation*}
\begin{equation}
X_j \ = \ Lp_1p_2 \dots p_{N-n-1}q_{N-n} \dots q_N
\end{equation}
where exactly the last $n$ breaks are independent, then 
\begin{equation}
\left|\E[\varphi_s(X_i)\varphi_s(X_j)] - \log_B^2(s) \right| \ \le \ f(n)
\end{equation}
where $f(n)$ depends only on $n$ and goes to 0 as $n\to \infty$. For a fixed $X_i$, there exists at most $\prod_{i=N-n}^N y_i$ (in fact, precisely $(y_{N-n}-1)\prod_{i=N-n+1}^N y_i$) sticks $X_j$ that involve $n$ independent breaks from $X_i$ (i.e., have a shared ancestor $n+1$ breaks ago). Thus,
\begin{align}
    \left| \frac{1}{M^2}\sum_{\substack{i\neq j \\ 1 \le i,j \le M}} \E[\varphi_s(X_i)\varphi_s(X_j)] - \log_B^2(s) \right| \ \nonumber &\le \ \frac{1}{M^2} \sum_{i = 1}^M \sum_{n=0}^{N-1} \prod_{i=N-n}^N y_i f(n)  \\ \nonumber
    &= \ \sum_{n=0}^{N-1} f(n) \prod_{i = 1}^{N-n-1} \frac{1}{y_i}  \\
    &\le \ \sum_{n=0}^{N-1} 2^{n+1-N} f(n). 
\end{align}
Since $\sum_{n=0}^{N-1} 2^{n+1-N}\leq 2$, the last sum is a weighted average of the $f(n)$'s where the terms with larger $n$ has higher weight. Therefore the sum goes to $0$ as $N \to \infty$ since $f(n)\to 0$ as $n\to\infty$

To cover the case when $Y_i = 1$ is allowed with positive probability strictly less than 1, simply note that almost surely the number of levels involving the stick breaking into greater than $1$ piece increases without bound as $N \to \infty$.

\subsection{Proof of Theorem \ref{thm: continuous_breaking_sticks}}
    The proof of $\lim_{N\to\infty}\E(P_N(s))=\log_B(s)$ is exactly the same as in the previous case. To show $\lim_{N\to\infty}\Var(P_N(s))=0$, it suffices to show that 
\begin{equation}
\frac{1}{M^2}\sum_{
1 \le i,j \le M}
\E[\varphi_s(X_i)\varphi_s(X_j)] -
\log^2_B(s)
\ = \
\frac{1}{M^2}\sum_{
1 \le i,j \le M}
\Big(\E[\varphi_s(X_i)\varphi_s(X_j)] -
\log^2_B(s)\Big)
\end{equation}
goes to zero.
Recall that if $X_i, X_j$ have exactly $n$ independent breaks, then 
\begin{equation}
\left|\E[\varphi_s(X_i)\varphi_s(X_j)] - \log_B^2(s) \right| \ \le \ f(n)
\end{equation}
where $f(n)$ depends only on $n$ and goes to 0 as $n\to \infty$.
So we have
\begin{align}
&\frac{1}{M^2}\sum_{
1 \le i,j \le M}
\Big(\E[\varphi_s(X_i)\varphi_s(X_j)] -
\log^2_B(s)\Big) \nonumber \\
\ \leq \
&\sum_{n=0}^{N-1}
f(n)\cdot\frac{\#\{(i,j): \text{$(X_i,X_j)$ have exactly $n$ independent breaks}\}}{M^2}.
\label{var_bound_1}
\end{align}
Let 
\begin{equation}
A_n \ = \ \#\{(i,j): \text{$(X_i,X_j)$ have exactly $n$ independent breaks}\}.
\end{equation}
Then 
\begin{equation}
\sum_{n=0}^{N-1} A_n \ = \ M^2.
\end{equation}
So the RHS of \eqref{var_bound_1}
is a weighted average of the $f(n)$'s. 
We show that with probability tending to 1, 
\begin{equation}
    \frac{1}{M^2}\sum_{n=0}^{\log\log N} A_n\ \to \ 0
    \label{eq:small_n}
\end{equation}
 as $N\to\infty$.  Intuitively, we would expect the number of pairs of $(X_i,X_j)$ having at most $\log\log N$ independent breaks (i.e., having high dependence) to only make up a very small proportion of the $M^2$ pairs in total.
If so, 
\begin{equation}
\sum_{n=0}^{\lfloor\log\log N\rfloor} f(n) \frac{A_n}{M^2} \ \to \ 0
\end{equation}
as $N\to\infty$ since $f(n)$ is bounded.
On the other hand, 
\begin{equation}
\sum_{n=\log\log N+1}^{N} f(n) \frac{A_n}{M^2} \ \leq \ f(\log\log N+1) \ \to \ 0
\end{equation}
as $N\to\infty$, therefore we would have that the RHS of \eqref{var_bound_1} tends to 0 as $N\to\infty$, as desired.

Now we show $\eqref{eq:small_n}$. 
Let $\alpha_1, \alpha_2, \dots, \alpha_r$ be the sticks at the $(N - \lfloor \log \log N \rfloor)$-th level. Let $a_i$ be the number of sticks that come from $\alpha_i$ at the end of all $N$ levels. Then, any two $(X_i, X_j)$ having at most $\log\log N$ independent breaks share some $\alpha_l$ as an ancestor. Thus, the number of such pairs is 
\begin{equation}
a_1^2 + a_2^2  + \cdots + a_r^2.
\end{equation}
Let $Z_1:=a_1^2 + a_2^2 + a_3^2 + \dots a_r^2$ and $Z_2:=(a_1 + a_2 + \dots + a_r)^2=M^2$.
Then 
\begin{equation}
    \frac{1}{M^2}\sum_{n=0}^{\lfloor \log \log N \rfloor} {A_n} \ = \ \frac{a_1^2 + a_2^2 + \dots + a_r^2}{M^2} \ = \ \frac{Z_1}{Z_2}.
\label{eq:z1/z2}
\end{equation}

We claim that $r > \sqrt{N}/2$ with high probability. Consider a sequence of $\lceil \sqrt{N}\rceil$ levels and let the number of sticks in the highest level be $\ell$. The probability that the number of sticks does not increase at all throughout the levels, i.e., that all levels have $\ell$ sticks is at most $p_1^{\ell\sqrt{N}}$. Thus, we can look at the blocks of $\lceil \sqrt{N}\rceil$ levels and deduce that the probability of increasing the number of sticks at least once is at least
\begin{equation}
    \prod_{\ell=1}^{\sqrt{N}} \left( 1 - p_1^{\ell\sqrt{N}} \right) \ \ge \ 1 - \sum_{\ell = 1}^\infty p_1^{\ell\sqrt{N}} \ = \ 1 - \frac{p_1^{\sqrt{N}}}{1 - p_1^{\sqrt{N}}}.
\end{equation}
This probability thus approaches $1$ as $N \to \infty$. This shows that with probability approaching 1, the total number of sticks increases by at least 1 within each block, so there will be more than $\sqrt{N}/2$ sticks at the $(N - \lfloor \log \log N \rfloor)$-th level.

Now, fix $r$, i.e., condition on the value of $r$ and assume $r > \sqrt{N}/2$. The $a_i$ are independent and identically distributed. Let $\mu_j = \E[a_i^j]$ and $\sigma^2 = \Var[a_i]$. We have
\begin{equation}
\E[Z_1] \ = \ r\mu_2, \quad \Var[Z_1] \ = \ r\Var[a_i^2] \ =\ r\mu_4 - r\mu_2^2
\end{equation}
and
\begin{align}
    \E[Z_2] \ &= \ r^2\mu^2 + r\sigma^2 \ = \ r\mu_2 + r(r-1)\mu_1^2, \\
    \Var[Z_2] \ &= \nonumber \ \E[(a_1 + a_2 + \dots + a_r)^4] - \E[(a_1 + a_2 + \dots + a_r)^2]^2 \\ \nonumber
    &= \nonumber \ r\mu_4 + 4r(r-1)\mu_1\mu_3 + 4r(r-1)\mu_2^2 + 6r(r-1)(r-2)\mu_1^2\mu_2 \\ \nonumber
    &\quad \ + r(r-1)(r-2)(r-3)\mu_1^4 - (r\mu_2 + r(r-1)\mu_1^2)^2 \\ \nonumber
    &= \ \mu_1^4 r^4 + (6\mu_1^2\mu_2 - 6\mu_1^4)r^3 + (4\mu_1\mu_3 + 4\mu_2^2 - 18\mu_1^2\mu_2)r^2 \nonumber \\
    &\quad \ + (\mu_4 - 4\mu_1\mu_3 - 4\mu_2^2 + 12 \mu_1^2\mu_2 - 6\mu_1^4)r \nonumber \\
    &\quad \ - [\mu_1^4r^4 + (\mu_1^2\mu_2 - \mu_1^4)r^3 + (\mu_2^2 - 2\mu_1^2\mu_2 + \mu_1^4)r^2] \nonumber \\
    & \le \ 5\mu_1^2\mu_2 r^3 + (4\mu_1\mu_3 + 3\mu_2^2)r^2 + (\mu_4 + 12\mu_1^2\mu_2) r.
\end{align}
Now, we have that
\begin{equation} \label{moment_bound}
    \mu_j \ \le \ \max(a_i)^j \ \le \ m^{j\log \log N} \ = \ (\log N)^{j \log m}
\end{equation}
so that
\begin{equation} \label{large_variance_bound}
    \Var[Z_2] \ \le \ 5(\log N)^{C}r^3 + 7(\log N)^{C}r^2 + 13(\log N)^{C}r \ \le \ 25(\log N)^{C}r^3
\end{equation}
where $C = 4 \log m$. Moreover, we may use \eqref{moment_bound} to obtain
\begin{equation} \label{small_variance_bound}
    \Var[Z_1] \ \le \ r\mu_4 \ \le \ (\log N)^C r.
\end{equation}
We also have the following bounds (by trivially lower bounding moments by $1$).
\begin{equation} \label{expectation_bounds}
    \E[Z_1] \ \ge \ r, \quad \E[Z_2] \ \ge \ r^2.
\end{equation}
Now apply Chebyshev's inequality to both $Z_1$ and $Z_2$ to get
\begin{equation}
\prob(Z_1 > \frac{3}{2}\E[Z_1]) \ \leq \  
\frac{\Var[Z_1]}{\frac{1}{4}\E[Z_1]^2}
 \ \leq \ \frac{4(\log N)^C r}{r^2\mu_2^2}
 \ \leq \ \frac{4(\log N)^C}{r}
\end{equation}
and
\begin{equation}
\prob(Z_2 < \frac{1}{2}\E[Z_2]) \ \leq \ 
\frac{\Var[Z_2]}{\frac{1}{4}\E[Z_2]^2}
 \ \leq \ \frac{100(\log N)^C r^3}{(r\mu_2+r(r-1)\mu_1^2)^2}
 \ \leq \ \frac{100(\log N)^C r^3}{r^4}
 \ \leq \ \frac{100(\log N)^C}{r}.
\end{equation}
Note that in simplifying the above two expressions we used the trivial bounds $\mu_j\geq 1$ for the denominators.
Recall that with probability going to 1, $r\geq \sqrt{N}/2$.
Under this assumption, as $N\to\infty$,
\begin{align}
    \prob\left(\frac{Z_1}{Z_2}< 
\frac{\frac{3}{2}\E[Z_1]}{\frac{1}{2}\E[Z_2]}\right) \nonumber &\ \geq \ 
1-\prob\left(Z_1 > \frac{3}{2}\E[Z_1]\right)-\prob\left(Z_2 < \frac{1}{2}\E[Z_2]\right) \\
&\ = \ 1 -  \frac{4(\log N)^C}{\sqrt{N}/2} - \frac{100(\log N)^C}{\sqrt{N}/2} \ \to \ 1.
\end{align} 
Since 
\begin{equation}
\frac{\frac{3}{2}\E[Z_1]}{\frac{1}{2}\E[Z_2]} \ = \ \frac{3r\mu_2}{r\mu_2+r(r-1)\mu_1^2}
\ \leq \ \frac{3(\log N)^{j\log m}}{r}
\ \leq \ \frac{3(\log N)^{j\log m}}{\sqrt{N}/2}
\ \to \ 0
\end{equation}
as $N\to\infty$, we have 
$Z_1 / Z_2 \ \to \ 0$
as $N\to\infty$ with probability going to 1. By \eqref{eq:z1/z2}, this implies \eqref{eq:small_n}, so we are done.


\section{Continuous fragmentation with probabilistic stopping}
In this section, we consider the continuous breaking process in which the splitting number is fixed, but each new stick has a certain probability of becoming inactive. This is inspired by the conjecture on the discrete breaking process stopping at certain residue classes.

For simplicity, in the continuous breaking problem we always assume the initial length is 1, since scaling of stick lengths does not affect Benfordness. We can first consider the following simpler scenario. It can be seen as a generalization of the Restricted 1-Dimensional Decomposition Model studied in \cite[Theorem 1.9]{B+} (where they have shown the case $k = 2$). The proof is analogous.

\begin{theorem}
\label{thm:onestickfixratio}
    Start from a stick of length $L = 1$. Fix a positive integer $k\geq 2$. From a good distribution on $(0,1)$, sample $k-1$ values as cut points to break the stick into $k$ pieces. Then have exactly one stick be alive (i.e., all the remaining $k-1$ sticks become dead). Assume nothing about which stick is chosen to be alive. Repeat this $N$ times. Then as $N\to\infty$, the collection of resulting dead stick lengths converges to strong Benford behavior.
\end{theorem}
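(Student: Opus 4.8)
The plan is to reduce the statement to \Cref{thm:productConvergeToBenford} by showing that the length of any fixed surviving stick after $N$ rounds is a product of $N$ independent random variables, each drawn from a good distribution, so that \Cref{thm:Mellin_transform_condition_continuity} applies uniformly. Concretely, at each round the unique alive stick of current length $\theta$ is cut at the $k-1$ coordinates of a sample from a good distribution $\mathcal D$ on $(0,1)^{k-1}$, producing pieces of lengths $\theta Y_0, \theta Y_1, \dots, \theta Y_{k-1}$ where $Y_i = \mathrm{rk}_{i+1}(X) - \mathrm{rk}_i(X)$ (with the rank conventions from \Cref{sec:mellinTransformCondition}); since $\mathcal D$ is good, each $Y_i$ has H\"older continuous density on $[0,1]$. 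Whichever piece is chosen to be alive, its length equals $\theta$ times one of the $Y_i$, i.e., times a random variable whose density is one of finitely many H\"older continuous densities supported on $[0,1]$. Iterating, the length of the single alive stick after $n$ rounds — and hence the length of every dead stick born at round $n$ — has the form $\prod_{m=1}^{n} p_m$ where $p_m$ is distributed according to a density $f_{\mathcal D_{p(m)}}$ chosen from the finite family $\ef := \{$densities of the $Y_i$ over all distributions used and all $0\le i\le k-1\}$, and the $p_m$ are independent.

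Next I would set up the Benfordness criterion of \Cref{def:benfordCriterion}. Since there is exactly one alive stick at each level and $k-1$ new dead sticks are produced per level, after $N$ rounds the collection of dead sticks has size $(k-1)N$, partitioned by birth level $n = 1, \dots, N$ into blocks of size $k-1$; the dead sticks born at level $n$ all have length of the form $\prod_{m=1}^{n} p_m$ times the final ratio $Y_i$, so each is itself a product of $n$ independent good random variables. Thus $\E[\varphi_s(X_i)] \to \log_B s$ at a rate depending only on the birth level $n$, uniformly over all choices of which stick is kept alive, by the uniform convergence in \Cref{thm:Mellin_transform_condition_continuity}. Hence
\begin{equation}
\E[P_N(s)] \ = \ \frac{1}{(k-1)N}\sum_{n=1}^{N}\sum_{\substack{X_i \text{ born}\\ \text{at level } n}} \E[\varphi_s(X_i)] \ = \ \frac{1}{N}\sum_{n=1}^{N} \bigl(\log_B s + \varepsilon(n)\bigr) \ \longrightarrow \ \log_B s,
\end{equation}
since the Ces\`aro average of $\varepsilon(n) \to 0$ tends to $0$.

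For the variance I would bound $\E[\varphi_s(X_i)\varphi_s(X_j)] - \log_B^2 s$ by a quantity $g(n)$ depending only on the number $n$ of independent breaks separating $X_i$ and $X_j$, with $g(n)\to 0$, exactly as in \cite[Theorem 1.5]{B+} and the proof of \Cref{thm: continuous_breaking_levels}: writing each of $X_i,X_j$ as a common product times independent tails, one applies \Cref{thm:productConvergeToBenford} to the two tails conditionally on the shared part. The combinatorics are even simpler here than in the random-parts case: for a dead stick $X_i$ born at level $a$, the dead sticks $X_j$ that share all but $n$ of $X_i$'s breaks are those born at level $\max(a - n, 1)$ or later but within the relevant subtree, and a crude count shows the number of such pairs $(i,j)$ with few shared breaks is $o(N^2)$. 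Then
\begin{equation}
\Bigl|\,\frac{1}{((k-1)N)^2}\sum_{i,j}\E[\varphi_s(X_i)\varphi_s(X_j)] - \log_B^2 s\,\Bigr| \ \le \ \frac{1}{((k-1)N)^2}\sum_{i,j} g\bigl(n(i,j)\bigr),
\end{equation}
a weighted average of the $g(n)$'s in which the weight on small $n$ vanishes as $N\to\infty$, so $\Var[P_N(s)]\to 0$ by the same splitting-at-$\log\log N$ argument used above. The main obstacle is the bookkeeping in this last step: one must check carefully that, because there is only a single living lineage, the pairs $(X_i,X_j)$ of dead sticks are heavily weighted toward large $n$ (most pairs are born many levels apart and hence nearly independent), and make the count of low-$n$ pairs precise enough to conclude — but this is routine once the product structure from the first paragraph is in hand, and no new phenomenon beyond \cite{B+} arises.
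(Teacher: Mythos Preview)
Your proposal is correct and follows essentially the same route as the paper: express each dead stick as a product of independent ratios with H\"older continuous densities, invoke \Cref{thm:Mellin_transform_condition_continuity} for uniform convergence of $\E[\varphi_s(X_i)]$, and for the variance restrict to pairs that are both born late enough and far enough apart so that the conditioning argument from \cite[Theorem 1.5]{B+} applies. The paper's only cosmetic differences are that it truncates the first $\log N$ levels rather than Ces\`aro-averaging for the expectation, and it uses a $\log N$ (not $\log\log N$) cutoff to define the ``good'' set $\mathcal{A}$ of pairs, verifying directly that $|\mathcal{A}| = N^2/2 + O(N\log N)$; your pair-counting sketch would yield the same conclusion once the wording is fixed (you want the pairs with \emph{few independent} breaks to be $o(N^2)$, not those with few shared breaks).
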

\begin{proof}
    Note that at the end of $N$ levels there will be $(k-1)N+1$ sticks in total. Since $\frac{(k-1)\log(N)}{N+1}\to 0$ as $N\to\infty$, we may remove the sticks that become dead in the first $\log(N)$ levels. The remaining sticks $X_i$ satisfy, uniformly,
    \begin{equation}
    \E[\varphi_s(X_i)] \ \to \ \log_{10}(s)
    \end{equation}
    as $N\to\infty$ by \Cref{thm:Mellin_transform_condition_continuity}. It follows that
    \begin{equation}
    \E[P_N(s)] \ \to \ \log_{10}(s)
    \end{equation}
    as $N\to\infty$. For the variance, we also adopt a similar strategy. Label the sticks that become dead at level $n$ as $X_{n(k-1)+i}$ for $1\leq i\leq k-1$. We say 
    a stick $X_i$ belongs to level $n$ if $n=\lceil \frac{i}{k-1}\rceil$, i.e., the stick becomes dead at level $n$.
    Consider the set of pairs of indices
    \begin{equation}
    \mathcal{A}\ := \ \{(i,j): (k-1)\log(N)+1\leq i\leq j - (k-1)(\log(N)+1)\leq N - (k-1)(\log(N)+1)\}.
    \end{equation}
    In other words, this is the collection of pairs that both become inactive after at least $\log(N)$ levels and are at least $\log(N)$ levels apart. The same reasoning as in the proof of \cite[Theorem 1.9]{B+} shows that for all $(i,j)\in\mathcal{A}$, we have
    \begin{equation} \label{one_stick_fix_ratio_product_calc}
        \E[\varphi_s(X_i)\varphi_s(X_j)] \ \to \ \log_{10}^2(s)
    \end{equation}
    as $N\to\infty$ uniformly. Explicitly, we have that
    \begin{eqnarray}
    X_i & \ = \ & \alpha_1 \cdots \alpha_{t-1} \alpha_t \cdots \alpha_u \nonumber\\
    X_j & \ = \ &\alpha_1 \cdots \alpha_{t-1} \beta_t \cdots \beta_v
    \end{eqnarray}
    where $v \ge t + \log(N)$ and $\alpha_{t+1}, \dots, \alpha_u, \beta_{t+1}, \dots, \beta_v$ are independent. Let $c = \alpha_1 \dots \alpha_{t-1} \beta_t$. We have that
    \begin{equation}
    \E[\varphi_s(X_i)\varphi_s(X_j) | \alpha_1, \dots, \alpha_u, \beta_t]\ =\  \varphi_s(X_i)\varphi_s(c \beta_{t+1} \dots \beta_v)
    \end{equation}
    which approaches $\varphi_s(X_i) \log_B(s)$ uniformly, with error a function of $N$. However, $\E[\varphi_s(X_i)] \to \log_B(s)$ uniformly as well, so \eqref{one_stick_fix_ratio_product_calc} follows. Moreover, it is easy to check that
    \begin{equation}
    |\mathcal{A}|=\frac{N^2}{2}+O(N\log (N)).
    \end{equation} Thus $\Var(P_N(s))\to 0$ as $N\to\infty$, as desired.
\end{proof}
A special case of the process in \Cref{conj:contbreakwstopratio} is the following.
\begin{theorem}
    Start from $R$ sticks, each of length $L=1$. Fix a positive integer $k\geq 2$. Initially all sticks are alive and each breaks into $k$ pieces independently, resulting in $kR$ new sticks. Then randomly choose $R$ out of these new sticks to continue, while the remaining $kR-R$ die. Repeat this for $N$ levels. Then as $N\to\infty$, the collection of resulting stick lengths converges to strong Benford behavior.
\end{theorem}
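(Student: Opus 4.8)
The plan is to verify the two conditions in \Cref{def:benfordCriterion}, following closely the proof of \Cref{thm:onestickfixratio} (the $R=1$ case) and the variance argument of \Cref{thm: continuous_breaking_sticks}. The first step is bookkeeping on the genealogy: exactly $R$ sticks are alive at each level and each breaks into $k$ pieces, so after $N$ levels there are $(k-1)RN$ dead sticks and $R$ living ones, for a total of $M=((k-1)N+1)R$ sticks, all but an $O(1/N)$ fraction of which are dead. A stick that dies at level $n$ has length $\prod_{m=1}^{n}p_m$, where, along that stick's ancestral chain, $p_m$ is one of the rank-difference coordinates $Y_0,\dots,Y_{k-1}$ of the good breaking distribution used at level $m$; by goodness these come from a fixed finite family of H\"older continuous densities on $[0,1]$, and the breakings at distinct levels are independent. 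Taking the $R$ survivors at each level to be chosen uniformly at random, independently of the breaking proportions, we may condition on the (random) survival pattern: given the pattern, the tree shape is fixed and each stick length above is a genuine product of independent good random variables, so all estimates below hold uniformly over the pattern.

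For condition~(1), I would discard the at most $(k-1)R\log N$ sticks that die within the first $\log N$ levels, an $O(\log N/N)$ fraction that does not affect the limit of $\E[P_N(s)]$. Every remaining stick is then a product of more than $\log N$ independent factors drawn from the finite family of good densities, so \Cref{thm:Mellin_transform_condition_continuity}, with its uniformity over the choice of densities, gives $\E[\varphi_s(X_i)]=\log_B s+o(1)$ with the error uniform over all such sticks; averaging and taking expectation over the pattern yields $\E[P_N(s)]\to\log_B s$.

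For condition~(2), expanding as in \eqref{1_var_expansion} reduces matters to showing $\frac{1}{M^2}\sum_{i\neq j}\bigl(\E[\varphi_s(X_i)\varphi_s(X_j)]-\log_B^2 s\bigr)\to 0$. I would split the ordered pairs into the set $\mathcal{A}$ of pairs $(X_i,X_j)$ for which both sticks die after level $\log N$ and whose death levels differ by more than $\log N$, and its complement; a direct count gives $|\mathcal{A}^c|=O(R^2 N\log N)=o(M^2)$, so the complement contributes $o(1)$ since $\varphi_s$ is bounded. For a pair in $\mathcal{A}$ with, say, $X_i$ dying at level $a$ and $X_j$ at level $b>a$, conditioning on all the breaking at levels $1,\dots,a$ determines the length of $X_i$ together with the length $c$ of the alive ancestor of $X_j$ at level $a$, so that $X_j=c\prod_{m=a+1}^{b}\gamma_m$ is a constant times a product of $b-a>\log N$ independent good factors that are independent of $X_i$. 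As in \Cref{thm:onestickfixratio}, one then obtains $\bigl|\E[\varphi_s(X_i)\varphi_s(X_j)]-\log_B^2 s\bigr|\le f(b-a)+\bigl|\E[\varphi_s(X_i)]-\log_B s\bigr|$ for some $f$ with $f(n)\to 0$; since both $a$ and $b-a$ exceed $\log N$ on $\mathcal{A}$, the right-hand side tends to $0$ uniformly over $\mathcal{A}$, and summing over all pairs finishes the argument.

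The main obstacle is the decoupling estimate on $\mathcal{A}$: one must choose the conditioning level correctly (at level $a$, one past the last level where the two lineages could still share a node) so that only the genuinely independent factors of levels $a+1,\dots,b$ enter, and one must organize the pair count so that nearly dependent pairs --- which here occur at every level rather than only near level $N$, as in the basic model --- form an $o(M^2)$ fraction. Both points are handled by the $|a-b|>\log N$ splitting together with the uniformity furnished by \Cref{thm:Mellin_transform_condition_continuity}; the remaining verifications (the counting bound on $|\mathcal{A}^c|$ and the conditional-expectation estimate) mirror \Cref{thm:onestickfixratio} and \cite{B+} and are routine.
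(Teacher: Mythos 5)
Your proposal is correct and follows essentially the same route as the paper: the paper reduces this theorem to the argument of Theorem \ref{thm:onestickfixratio}, discarding the $(k-1)R\log N$ sticks that die in the first $\log N$ levels for the expectation and discarding the $o(M^2)$ highly dependent pairs (early-dying or dying within $\log N$ levels of each other) for the variance, exactly as you do. The paper merely sketches these steps, noting in addition that lineages diverging before level $a$ only increase independence; your write-up fills in the same bookkeeping and decoupling details explicitly.
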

\begin{proof}
    This follows from essentially the same argument as in \Cref{thm:onestickfixratio}. For expectation, again notice that out of all $(k-1)RN+R$ final pieces, at most $(k-1)R\log(N)$ pieces have lengths being a product of less than $\log(N)$ independent ratios. For the remaining ones, we still have $\E[\varphi_s(X_i)]\to\log_{10}(s)$
    uniformly in $i$ as $N\to\infty$, and $\E[P_N(s)]\to\log_{10}(s)$ follows since $\frac{(k-1)R\log(N)}{(k-1)RN+R}\to 0$. For variance, note that given $X_i$ and $X_j$, $i<j$, belonging to levels $n_i$ and $n_j$ respectively, the only difference between the current scenario and the one in \Cref{thm:onestickfixratio} is that now they could come from different parents at  level $n_i$. Namely, the number of independent levels they have could now be larger than $n_j - n_i$. This makes the condition $\E[\varphi_s(X_i)\varphi_s(X_j)]\to \log_{10}^2(s)$ even easier to satisfy. Therefore, using the same argument of throwing away highly dependent pairs, we can easily generalize the proof to the current case.
\end{proof}

\subsection{Proof of Theorem \ref{conj:contbreakwstopratio}}
We first show that the process starting from a single stick terminates in finitely many levels.

\begin{proof}[Proof of finite termination]
    Let $p$ be the probability that it does not terminate. In such a case, one of the live children initiates a breaking that does not terminate. Thus, we have, if $A$ is the number of live children of the original stick,
\begin{align}
    p & \ = \ \sum_{a = 1}^k \prob(A = a)\prob(\text{at least one of the $a$ live children initiates infinite breaking}) \nonumber \\ \nonumber
    & \ = \ \sum_{a = 1}^k \binom{k}{a}\frac{1}{k^a}\left( 1-\frac{1}{k} \right)^{k-a} (1-(1-p)^a) \\ \nonumber
    & \ = \ \sum_{a = 1}^k \binom{k}{a}\frac{1}{k^a}\left( 1-\frac{1}{k} \right)^{k-a} - \sum_{a = 1}^k \binom{k}{a}\left( \frac{1-p}{k} \right)^a\left( 1-\frac{1}{k} \right)^{k-a} \\ \nonumber
    & \ = \ \left[ 1 - \left( 1-\frac{1}{k} \right)^k \right] - \left[ \left( 1-\frac{p}{k} \right)^k - \left( 1-\frac{1}{k} \right)^k \right] \\ 
    & \ = \ 1 - \left( 1-\frac{p}{k} \right)^k.
\end{align}
Now, we have that, by Bernoulli's inequality,
\begin{equation}
\left( 1-\frac{p}{k} \right)^k \ \ge \ 1 - p
\end{equation}
with equality if and only if $p = 0$. But we do have equality, so $p=0$, as desired.
\end{proof}

Now, consider the process where all $R$ sticks are being broken simultaneously. The above result implies that for any given $R$, this process also ends in finitely many levels with probability 1. Now we show the second part of \Cref{conj:contbreakwstopratio}.

Let $n_i$ be the number of live sticks present at the $i^{th} $ level so that $n_0 = R$. Then, we have the following:
\begin{lemma}
    For $i \ge 0$,
    \begin{equation}
    \prob(|n_i - R| \le t) \ \ge \ 1- \frac{2i^3R(k-1)}{t^2k}
    \end{equation}
    if $t < R$.
    \label{lem:tailBoundOnNi}
\end{lemma}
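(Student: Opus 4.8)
The plan is to bound the fluctuation of $n_i$ from its mean $R$ by controlling the variance of $n_i$ at each level and then summing the increments. First, I observe that each level transforms the current number of live sticks $n$ into a new count obtained by breaking each of the $n$ sticks into $k$ pieces and then independently keeping each of the $kn$ resulting sticks alive with probability $r = 1/k$. Thus, conditional on $n_{i} = n$, the next count $n_{i+1}$ is a sum of $kn$ independent Bernoulli$(1/k)$ random variables, so $\E[n_{i+1} \mid n_i = n] = n$ (the process is a martingale, which is why $R$ is the natural center) and $\Var(n_{i+1} \mid n_i = n) = kn \cdot \tfrac{1}{k}(1 - \tfrac{1}{k}) = n(1 - 1/k) = n(k-1)/k$.

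Next, I would set up an inductive bound on $\Var(n_i)$. Using the conditional variance decomposition,
\begin{equation}
\Var(n_{i+1}) \ = \ \E[\Var(n_{i+1}\mid n_i)] + \Var(\E[n_{i+1}\mid n_i]) \ = \ \frac{k-1}{k}\E[n_i] + \Var(n_i) \ = \ \frac{(k-1)R}{k} + \Var(n_i),
\end{equation}
since $\E[n_i] = R$ for all $i$ by the martingale property. As $\Var(n_0) = 0$, this telescopes to $\Var(n_i) = i(k-1)R/k$. A technical point: this identity is valid as stated only while the process has not yet terminated; once all sticks are dead, $n_i$ is absorbed at $0$, but that only decreases the variance, so the bound $\Var(n_i) \le i(k-1)R/k$ holds unconditionally. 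The factor $i^3$ in the claimed bound (rather than $i$) is simply slack — one could also obtain it by the cruder route of writing $n_i - R$ as a telescoping sum of $i$ martingale increments, bounding each increment's second moment and applying the triangle inequality in $L^2$, which loses a factor of $i^2$ via Cauchy–Schwarz; either way the stated inequality follows comfortably.

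Finally, Chebyshev's inequality gives
\begin{equation}
\prob(|n_i - R| > t) \ \le \ \frac{\Var(n_i)}{t^2} \ \le \ \frac{i(k-1)R}{kt^2} \ \le \ \frac{2i^3(k-1)R}{kt^2},
\end{equation}
which is exactly the complement of the asserted bound (the hypothesis $t < R$ is not even needed for this argument, though it is the regime of interest for the application). I expect the only genuine subtlety to be the bookkeeping around termination — making sure that conditioning on the event "level $i$ has been reached" or treating the absorbed state $0$ does not break the variance recursion — and the cleanest fix is to define $n_i$ for all $i$ with $0$ as an absorbing state, check that $\E[n_{i+1}\mid \mathcal F_i] = n_i$ and $\Var(n_{i+1}\mid \mathcal F_i) \le n_i(k-1)/k$ hold pathwise (trivially when $n_i = 0$), and run the argument above verbatim.
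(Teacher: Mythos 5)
Your proof is correct, and it takes a genuinely different route from the paper's. The paper argues by induction on $i$: it splits the deviation $|n_{i+1}-R|$ into $|n_i-R|\le \frac{i}{i+1}t$ plus an increment $|n_{i+1}-n_i|\le\frac{1}{i+1}t$, applies Chebyshev to each increment separately (using the conditional binomial variance $n_i(1-1/k)$ together with $n_i<2R$, which is where the hypothesis $t<R$ enters), and accumulates the error terms; the shrinking thresholds $t/(i+1)$ are what produce the $i^3$ in the final bound. You instead exploit the martingale structure globally: $\E[n_{i+1}\mid n_i]=n_i$ and the law of total variance give the exact telescoping identity $\Var(n_i)=iR(k-1)/k$ (valid through the absorbing state $0$, as you correctly note, since both the conditional mean and conditional variance formulas hold pathwise there), and a single application of Chebyshev finishes. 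Your argument is shorter, dispenses with the hypothesis $t<R$, and yields the strictly stronger bound $iR(k-1)/(kt^2)$ in place of $2i^3R(k-1)/(kt^2)$, which of course implies the stated inequality since $2i^3\ge i$. The same improvement would carry over to the dependent version (Lemma \ref{lem:dependentTailBound}), where one only has the one-sided bound $\Var(n_{i+1}\mid n_i)\le n_ik^2$, turning the telescoping identity into a telescoping inequality.
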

\begin{proof}
    The result is trivial for $i = 0$. We proceed with induction on $i$. Assume the result for $i$; we show it for $i+1$. Fix $n_i$. We have that 
    \begin{align}
        \prob(|n_{i+1} - R| \le t) & \nonumber \ \ge \ \prob\left(|n_i - R| \le \frac{i}{i+1}t, \ \ |n_{i+1} - n_i| \le \frac{1}{i+1}t\right) \\ \nonumber
        & \ \ge \ 1 - \prob\left( |n_i - R| > \frac{i}{i+1}t \right) - \prob\left( |n_{i+1} - n_i| > \frac{1}{i+1}t, \ \ |n_i - R| \le \frac{i}{i+1}t \right) \\ \nonumber
        & \ \ge \ 1 - \prob\left( |n_i - R| > \frac{i}{i+1}t \right) - \prob\left( |n_{i+1} - n_i| > \frac{1}{i+1}t, \ \ n_i < 2R \right) \\
        & \ \ge \ 1 - \prob\left( |n_i - R| > \frac{i}{i+1}t \right) - \prob\left( |n_{i+1} - n_i| > \frac{1}{i+1}t \ \Big| \  n_i < 2R \right). \label{inductive_tail_bound_alive_sticks}
    \end{align}
    Now, note that $n_{i+1}$ is binomially distributed with parameters $n_ik$ and $1/k$. Thus, conditioning on $n_i$, it has expectation $n_i$ and variance $n_i(1 - 1/k)$. So, by Chebyshev's inequality,
    \begin{equation}
    \prob\left( |n_{i+1} - n_i| > \frac{1}{i+1}t \ \Big| \  n_i \right) \ < \ \frac{n_i(1-1/k)}{\frac{1}{(i+1)^2}t^2} \ \le \ \frac{2(i+1)^2R(k-1)}{t^2k}
    \end{equation}
    and we have that, from \eqref{inductive_tail_bound_alive_sticks} and the inductive hypothesis,
    \begin{align}
        \prob(|n_{i+1} - R| \le t) & \ \ge \ 1 - \frac{2i^3R(k-1)}{\frac{i^2}{(i+1)^2}t^2k} - \frac{2(i+1)^2R(k-1)}{t^2k} \nonumber\\
        & \ \ge \ 1 - \frac{2(i+1)^3R(k-1)}{t^2k}.
    \end{align}
\end{proof}

For any $R$ and $N$, define
\begin{equation}P_{R}(s) \ :=\ \frac{\sum_i \varphi_s(X_i)}{\#\{X_i\}}\end{equation}
where the sum runs over the set of resulting sticks in a process starting with $R$ sticks (which is finite with probability 1).
We show $\E[P_R(s)]\to\log_{10}(s)$ and $\Var(P_R(s))\to 0$ as 
$R\to\infty$. 

For the expectation, we first show the existence of a function $h(R)\to\infty$ as $R\to\infty$ such that the average of $\E[\varphi_s(X_i)]$ for sticks $X_i$ that die within the first $h(R)$ levels goes to $\log_{B}(s)$ as $R\to\infty$. Define
\begin{equation}
P'_{R}(s) \ := \ \frac{\sum_{X_i \text{  in first $n$ levels}} \varphi_s(X_i)}{\#\{X_i | X_i \text{  in first $n$ levels}\}}.
\end{equation}
For $X_i$ belonging to level $n$, 
\begin{equation}
\left|\E[\varphi_s(X_i)] - \log_B(s) \right| \ \le \ f(n)
\end{equation}
where $f$ satisfies $f(n)\to 0$ as $n\to\infty$ by \Cref{thm:Mellin_transform_condition_continuity}. We now show that in each of the first $h(R)$ levels, a roughly equal number of sticks become dead. We may take $h(R)=R^{1/10}$ and $t=R^{2/3}$ and apply \Cref{lem:tailBoundOnNi}. Then we obtain that when $i\leq h(R)$,
\begin{equation}
\prob(R-R^{2/3}<n_i<R+R^{2/3}) \ \ge \ 1- \frac{2R^{3/10}R(k-1)}{R^{4/3} k} \ \ge \ 1 - 2R^{-1/30} \ \to \ 1
\end{equation}
as $R\to\infty$.
Let $\overline{n_i}$ be the number of sticks that become inactive at level $i$. Then we have for all $i\leq h(R)$, with probability going to 1,
\begin{equation}
(k-1)R-(k+1)R^{2/3} \ < \ \overline{n_i} \ < \ (k-1)R + (k+1)R^{2/3},
\end{equation}
which implies, when $R$ is sufficiently large,
\begin{equation}
    \label{eq:boundOnDeadSticks}
    \left(k-\frac{3}{2}\right)R \ < \ \overline{n_i} \ < \ \left(k-\frac{1}{2}\right)R.
\end{equation}
Thus, conditioning on the above event,
\begin{align}
    |\E(P'_R(s)) - \log_B(s)| 
&\ \le \
\frac{1}{\sum_{i=1}^{h(R)}\overline{n_i}}\sum_{i=1}^{h(R)}f(i)\overline{n_i} \nonumber \\
&\ \le \
\frac{1}{(k-\frac{3}{2})Rh(R)}
\sum_{i=1}^{h(R)}f(i)(k-\frac{1}{2})R \nonumber \\
&\ \le \
3 \frac{1}{h(R)}\sum_{i=1}^{h(R)}f(i) \ \to \ 0
\end{align}
as $R\to\infty$.
This implies
$\E[P'_R(s)]\to\log_{10}(s)$. Now, for the sticks after level $h(R)$, simply note that
\begin{equation}
\left|\E[\varphi_s(X_i)] - \log_B(s) \right| \ \le \ f(n) \ \le \ \inf_{n \ge h(R)} f(n)
\end{equation}
which tends to $0$ as $R \to \infty$. $P_R(s)$ is a weighted average of these $\varphi_s(X_i)$ and $P'_R(s)$, so $|\E(P_R(s)) - \log_B(s)| \to 0$, as desired.

Now we analyze the variance. Since for any pair of final sticks $(X_i,X_j)$, if both die after at least $\log(R)$ levels and they die at least $\log(\log(R))$ levels apart (i.e., they have enough independence), then we have
\begin{equation}
\E[\varphi_s(X_i)\varphi_s(X_j)] \ \to \ \log_{B}^2(s)
\end{equation}
uniformly for all pairs satisfying these criteria by \Cref{thm:Mellin_transform_condition_continuity}. Thus, it suffices to show that the proportion of pairs of the following two types among all pairs goes to 0 as $R\to\infty$:
\begin{enumerate}
    \item at least one of $X_i$,$X_j$ dies before $\log(R)$ levels, or
    \item $X_i$, $X_j$ have a common ancestor less than $\log(\log(R))$ levels before they both die.
\end{enumerate}

Let $M$ be the total number of dead sticks ever.

To show (1), we first show that the number of sticks that die within the first $\log(R)$ levels is small compared to $M$ with probability going to 1. Keep our choice of $h(R)$ and $t$ earlier. 
Therefore when $R$ is sufficiently large, using the upper bound from \eqref{eq:boundOnDeadSticks}, we get that as number of sticks that die within the first $\log(R)$ levels is upper bounded by
\begin{equation}
\left(k-\frac{1}{2}\right)\log(R)R
\end{equation}
with probability going to 1.
Now again using \eqref{eq:boundOnDeadSticks}, we can lower bound $M$ by lower bounding the total number of sticks that die within the first $h(R)$ levels. This gives
\begin{equation}
\label{eq:lowerBoundOnM}
    M \ \geq \ h(R)\left(k-\frac{3}{2}\right)R \ = \ \left(k-\frac{3}{2}\right)R^{11/10}
\end{equation}
with probability going to 1. 
Since
\begin{equation}
\frac{(k-\frac{1}{2})\log(R)R}{(k-\frac{3}{2})R^{11/10}} \ \to \ 0
\end{equation}
as $R\to\infty$, we have shown that the proportion of sticks that die in the first $\log(R)$ levels among all goes to 0 as $R\to\infty$ with probability going to 1. This then implies that the number of pairs that involve a stick of this type also takes up a diminishing proportion of all pairs of final sticks as $R\to\infty$.

Now we show (2), namely, that the number of pairs $X_i$, $X_j$ having a common ancestor at most $\log(\log R)$ levels before they both die is $o(M^2)$ with high probability. Fix some $X_i$. Then, the number of sticks, dead or alive, that share the $\alpha$ ancestor of $X_i$ and is $\alpha - \beta$ levels away is at most $k^\beta$. Thus, the number of $X_j$ that satisfying (2) when paired with $X_i$ is bounded above by
\begin{equation}
    \sum_{\alpha=1}^{\lfloor \log(\log R) \rfloor} \sum_{\beta = 0}^{\lfloor \log(\log R) \rfloor} k^\beta \ \le \ \log(\log R) \frac{k^{\log(\log R)}-1}{k-1} \ \le \ \log(\log R) (\log R)^{\log k}.
\end{equation}
Hence, the number of such pairs is bounded above by $M(\log R)^{1 + \log k}=o(M^2)$ by \eqref{eq:lowerBoundOnM}.


\subsection{Proof of Theorem \ref{thm:contR>1/k}}

Let $A$ be some integer that is sufficiently large (we can determine what this means later). There then exists some fixed $j$ such that $n_j > A$ with positive probability $p^*$. Now, consider $i \ge j$. Conditioning on $n_i$, we have that $n_{i+1}$ is a random variable with mean $n_irk$ and variance $n_ir(1-r)$. Thus, by Chebyshev's inequality, we have that
\begin{equation}
    \prob\left(n_{i+1} > n_i\left( 1 + \frac{rk-1}{2} \right) \right) \ \ge \ 1 - \prob\left( |n_{i+1}-n_irk| \ge n_i \frac{rk-1}{2} \right) \ \ge \ 1 - \frac{n_ir(1-r)}{n_i^2 \left(\frac{rk-1}{2}\right)^2}.
\end{equation}
We can then let $a = \frac{r(1-r)}{A \left(\frac{rk-1}{2}\right)^2}$ and $c = 1 + \frac{rk-1}{2}$. Then the above inequality can be written as 
\begin{equation}
    \prob(n_{i+1} > cn_i) \ \ge \ 1 - \frac{aA}{n_i}.
\end{equation}
It follows that
\begin{align}
    \prob(n_{i+1} > Ac^{i-j+1} \ \big| \ n_i > Ac^{i-j})
& \nonumber \ \ge \
\prob(n_{i+1}>c n_i\ \big| \ n_i > Ac^{i-j}) \\ &\ \ge \
\inf_{n_i>Ac^{i-j}}\left(
1 - \frac{aA}{n_i}\right)  
\ \ge \ 1 - ac^{j-i}.
\end{align}
Hence, the probability that $n_i > Ac^{i-j}$ for all $i \ge j$ given that $n_j > A$ is at least
\begin{equation}
p' \ = \ (1-a)(1-ac^{-1})(1-ac^{-2}) \cdots.
\end{equation}
Now, since $\lim_{x \to 0} \log(1-x)/x = -1$, we may set $A$ large enough so that $a$ is sufficiently small so that $\log(1-ac^{t}) > -2ac^t$ for $t \le 0$. We then have
\begin{equation}
\log(p') \ = \ \sum_{t = 0}^\infty \log(1-ac^{-t}) \ > \ \sum_{t = 0}^\infty -2ac^{-t} \ = \ -\frac{2a}{1-c}.
\end{equation}
In particular, $p' \ge e^{-2a/(1-c)} > 0$. 
Thus the probability that $n_i > Ac^{i-j}$ for all $i \ge j$ is at least $p^*p'$ which is positive. 
Hence, not only is the process infinite with positive probability, but the number of alive sticks at each level blows up with positive probability.

\subsection{Proof of Theorem \ref{conj: contbreaks_dependent}}
Without assuming independence on the alive/dead status of the sticks, we prove the following weaker version of \Cref{lem:tailBoundOnNi}.
\begin{lemma}
\label{lem:dependentTailBound}
    For $i \ge 0$,
    \begin{equation}
    \prob(|n_i - R| \le t) \ \ge \ 1- \frac{2i^3Rk^2}{t^2}
    \end{equation}
    if $t < R$.
\end{lemma}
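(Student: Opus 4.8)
The plan is to re-run the inductive argument from the proof of \Cref{lem:tailBoundOnNi} almost verbatim, isolating the two places where independence of the sibling statuses was used. The base case $i=0$ is immediate since $n_0=R$. For the inductive step, assuming the claimed bound at level $i$, split exactly as in \eqref{inductive_tail_bound_alive_sticks}:
\[
\prob(|n_{i+1}-R|\le t)\ \ge\ 1-\prob\!\left(|n_i-R|>\tfrac{i}{i+1}t\right)-\prob\!\left(|n_{i+1}-n_i|>\tfrac{1}{i+1}t,\ n_i<2R\right),
\]
using that $|n_i-R|\le\tfrac{i}{i+1}t\le t<R$ forces $n_i<2R$.

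The one new input is how to control $n_{i+1}$ given $n_i$ without sibling independence. Conditioned on $n_i$, write $n_{i+1}=\sum_{j=1}^{n_i}S_j$, where $S_j$ is the number of live children of the $j$-th live stick at level $i$. Distinct parents break and transmit life and death independently of one another --- only children of a common parent may be correlated --- so the $S_j$ are independent, with $0\le S_j\le k$; and since every child is alive with marginal probability $1/k$, linearity of expectation gives $\E[S_j]=1$ whatever the joint law of the siblings. Hence $\E[n_{i+1}\mid n_i]=n_i$, and, because $\Var(S_j)\le\E[S_j^2]\le k^2$, we get $\Var(n_{i+1}\mid n_i)=\sum_j\Var(S_j)\le n_ik^2<2Rk^2$ on the event $\{n_i<2R\}$. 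This cruder variance bound (with constant $k^2$, versus the exact $(k-1)/k$ available in the independent case) is exactly the loss recorded in the statement.

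The remainder is mechanical. Applying Chebyshev's inequality to $n_{i+1}-n_i$ conditionally on each admissible value of $n_i$ --- hence on $\{n_i<2R\}$ --- gives
\[
\prob\!\left(|n_{i+1}-n_i|>\tfrac{1}{i+1}t\ \Big|\ n_i<2R\right)\ <\ \frac{2Rk^2}{t^2/(i+1)^2}\ =\ \frac{2(i+1)^2Rk^2}{t^2},
\]
while the inductive hypothesis (applicable since $\tfrac{i}{i+1}t<R$) bounds $\prob(|n_i-R|>\tfrac{i}{i+1}t)$ by $\frac{2i^3Rk^2}{(it/(i+1))^2}=\frac{2i(i+1)^2Rk^2}{t^2}$. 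Summing the two error terms and subtracting from $1$ gives $\prob(|n_{i+1}-R|\le t)\ge 1-\frac{2(i+1)^3Rk^2}{t^2}$, which is the assertion at level $i+1$, completing the induction.

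I do not anticipate a genuine obstacle. The only point requiring care is the observation that confining the dependence to sibling groups keeps $n_{i+1}-n_i$ a sum of $n_i$ independent, mean-zero, bounded increments, so the conditional mean is unchanged and the conditional variance remains linear in $n_i$; and, more routinely, one should note that a Chebyshev bound holding for every value of $n_i$ below $2R$ passes to the conditional probability given $\{n_i<2R\}$.
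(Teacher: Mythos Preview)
Your proposal is correct and follows essentially the same approach as the paper's proof: induction on $i$ with the same telescoping decomposition \eqref{inductive_tail_bound_alive_sticks}, writing $n_{i+1}$ as a sum of $n_i$ independent per-parent contributions with mean $1$ and variance at most $k^2$, and applying Chebyshev. If anything, you are slightly more careful than the paper in spelling out why the $S_j$ are independent and mean $1$, and in noting that the Chebyshev bound passes from each value of $n_i$ to the event $\{n_i<2R\}$.
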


\begin{proof}
    As in the proof of \Cref{lem:tailBoundOnNi}, we proceed with induction on $i$, noting that the result is trivial for $i = 0$. By the same calculation, \eqref{inductive_tail_bound_alive_sticks} holds, That is,
    \begin{equation}
    \prob(|n_i - R| \le t) \ \ge \ 1 - \prob\left( |n_i - R| > \frac{i}{i+1}t \right) - \prob\left( |n_{i+1} - n_i| > \frac{1}{i+1}t \ \Big| \  n_i < 2R \right).
    \end{equation}
    We have that $n_{i+1}$ is the sum of $n_i$ independent random variables with mean $1$ and variance bounded by $k^2$. Thus, conditioning on $n_i$, it has expectation $n_i$ and variance at most $n_ik^2$. Chebyshev's inequality implies
    \begin{equation}
    \prob\left( |n_{i+1} - n_i| > \frac{1}{i+1}t \ \ \Big| \  \ n_i \right) \ < \ \frac{n_ik^2}{\frac{1}{(i+1)^2}t^2} \ \le \ \frac{2(i+1)^2Rk^2}{t^2}.
    \end{equation}
    We then have that
    \begin{equation}
    \prob(|n_{i+1} - R| \le t) \ \ge \ 1 - \frac{2i^3Rk^2}{\frac{i^2}{(i+1)^2}t^2k} - \frac{2(i+1)^2Rk^2}{t^2k} \ \ge \ 1- \frac{2(i+1)^3Rk^2}{t^2}
    \end{equation}
    which completes the induction.
\end{proof}
\Cref{conj: contbreaks_dependent}
 follows from essentially the same arguments as in proof of \Cref{conj:contbreakwstopratio} using \Cref{lem:dependentTailBound}. We highlight the necessary changes below.
 
For any $R$ and $N$, define
\begin{equation}P_{R,N}(s) \ := \ \frac{\sum_i \varphi_s(X_i)}{\#\{X_i\}}\end{equation}
where the sum runs over the set of resulting sticks in the first $N$ levels of a process starting with $R$ sticks. We prove $\E[P_{R,N}(s)]\to\log_{10}(s)$ and $\Var(P_{R,N}(s))\to 0$ if $N\geq \log(R)$ and $R\to\infty$. Keep the choices of $h(R) = R^{1/10}$ and $t=R^{2/3}$ in the proof of \Cref{conj:contbreakwstopratio}. For sticks that die after $h(R)$ levels, we know that 
\begin{equation}
\left|\E[\varphi_s(X_i)] - \log_B(s) \right| \ \le \ f(h(R))
\end{equation}
where the right-hand-side goes to 0 in $R$. Therefore it again suffices to estimate the errors
\begin{equation}
\left|\E[\varphi_s(X_i)] - \log_B(s) \right|
\end{equation}
for $X_i$ that dies within the first $h(R)$ levels. Now the exact same argument applies simply after replacing \Cref{lem:tailBoundOnNi} with \Cref{lem:dependentTailBound}.

For variance, let $M$ now denote the number of resulting sticks after $N$ levels. By the same logic,
\begin{equation}
\E[\varphi_s(X_i)\varphi_s(X_j)] \ \to \ \log_B^2(s)
\end{equation}
uniformly given that $X_i$ and $X_j$ have a most recent common ancestor more than $\log(\log R)$ levels away from $X_i$ and both $X_i, X_j$ die after at least $\log(R)$ levels. It therefore suffices to show that such pairs $X_i, X_j$ make up a proportion of all pairs of dead sticks that tends to $1$. This is done in the same way as in the proof of \Cref{conj:contbreakwstopratio}.

\subsection{Proof of Theorem \ref{thm: contbreaks_dependent_r_small}}

We now argue that there is a limiting distribution for the stick lengths exists. This implies that the mantissas also approach some distribution.

\begin{lemma}
\label{lem:limitExists}
    When $r\leq 1/k$, the final collection of stick lengths converges to a unique limiting distribution as $R\to\infty$.
\end{lemma}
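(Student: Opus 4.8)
The plan is to separate the subcritical regime $r<1/k$ — which is the case actually needed for \Cref{thm: contbreaks_dependent_r_small} — from the critical regime $r=1/k$, since the two behave quite differently. The key structural observation in both cases is that, although the alive/dead statuses of the children of one stick may be dependent, the fragmentation ``bushes'' grown from the $R$ distinct initial sticks are independent and identically distributed; all dependence is internal to a single bush. So write $\nu=\sum_x\delta_x$ for the random point measure recording the lengths of all dead sticks in one such bush (grown from a single stick of length $1$ until it terminates), $N=\nu(1)$ for the number of these sticks, and let $\nu^{(1)},\dots,\nu^{(R)}$ be the i.i.d.\ copies coming from the $R$ initial sticks; the empirical distribution of the pooled final collection is then the random probability measure $\widehat\mu_R=(\sum_{j=1}^R\nu^{(j)})/(\sum_{j=1}^R N^{(j)})$.

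For $r<1/k$ I would first record two elementary facts: since $\E[n_{i+1}\mid n_i]=rk\,n_i$ we get $\E[n_i]=(rk)^i$, so the process terminates almost surely (making ``final collection'' well defined) and $1\le\E[N]<\infty$, since $N$ is bounded by the total number of sticks ever produced in the bush, which has finite expectation by subcriticality. This lets me define the candidate limit $\mu_\infty$ by $\int g\,d\mu_\infty:=\E[\nu(g)]/\E[N]$ for bounded measurable $g$; it is a probability measure (take $g\equiv 1$), it is supported on $(0,1)$, and it is atomless, since each dead length is a product of continuously distributed ratios and hence $\E[\nu(\{a\})]=0$ for every fixed $a$. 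Then, for each fixed bounded continuous $g$, applying the strong law of large numbers separately to the i.i.d.\ sequences $(\nu^{(j)}(g))_j$ and $(N^{(j)})_j$ (both integrable since $|\nu(g)|\le\|g\|_\infty N$) gives $\widehat\mu_R(g)\to\E[\nu(g)]/\E[N]=\mu_\infty(g)$ almost surely. Carrying this out simultaneously over a countable family of test functions that determines weak convergence of probability measures on $[0,1]$ yields $\widehat\mu_R\Rightarrow\mu_\infty$ almost surely; uniqueness of the limit is then immediate, and since $\mu_\infty$ is atomless the discontinuity set of $M_B$ is $\mu_\infty$-null, so the mantissa distributions converge to $(M_B)_*\mu_\infty$ — the assertion in the sentence following the lemma.

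For $r=1/k$ the branching is critical: $\E[n_{i+1}\mid n_i]=n_i$, the expected total progeny of a single bush is infinite, and (under sibling dependence) the process from one stick need not terminate at all, so ``final collection'' must be read as the $N\to\infty$ limit of the level-$N$ collection. Here the law-of-large-numbers argument above breaks down, but the limiting distribution has already been pinned down: \Cref{conj: contbreaks_dependent} shows that for every $N\ge\log R$ the level-$N$ collection converges, as $R\to\infty$, to strong Benford behavior, which is exactly the statement of the lemma in this regime (with limiting distribution the base-$B$ Benford distribution).

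I expect the critical case to be the only genuine obstacle. The tempting fix — truncate the bushes at level $n$, apply the i.i.d.\ law of large numbers to the (now finite-expectation) truncated collections, and let $n\to\infty$ — does not work, because in the critical regime the expected proportion of sticks that die after level $n$ does not tend to $0$; so one really must fall back on the concentration estimates underlying \Cref{conj: contbreaks_dependent} rather than on any law of large numbers for i.i.d.\ bushes.
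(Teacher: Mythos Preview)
Your argument for $r<1/k$ is correct and follows the same idea as the paper's: both exploit the i.i.d.\ structure of the $R$ bushes and apply a law of large numbers. The paper first conditions on the number $i$ of dead sticks in a single bush, identifies the conditional length distribution $\mathcal{D}_i$, writes the limit as the size-biased mixture $F=\sum_i ip_iF_{\mathcal{D}_i}\big/\sum_i ip_i$, and then invokes Borel's law of large numbers rather informally. Your direct application of the strong law to the pair $(\nu^{(j)}(g),N^{(j)})$ packages the same content more cleanly, avoids the intermediate conditioning on $i$, and makes the passage to weak convergence (via a countable determining class and atomlessness of $\mu_\infty$) explicit. Both routes arrive at the same limit $\mu_\infty$.

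You are also right to flag the critical case $r=1/k$: the paper's own proof leans on $\E[N]<\infty$ through \Cref{lem: r_small_stick_expectation}, which breaks down at criticality, so neither argument actually covers $r=1/k$ as written. Since the lemma is only invoked in the proof of \Cref{thm: contbreaks_dependent_r_small}, where $r<1/k$ strictly, this gap is immaterial for the paper's purposes; your deferral to \Cref{conj: contbreaks_dependent} is a sensible reading, though note that that result delivers convergence of the \emph{mantissa} distribution rather than a limiting distribution for the stick lengths themselves.
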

\begin{proof}
    Fix $R$. Note that the distribution of the overall collection of sticks resulting from breaking $R$ initial sticks is an average of the distributions of sticks resulting from each initial stick weighted by the number of resulting sticks. For a single-stick breaking process, let $p_i$ be the probability that it ends with $i$ sticks. Assume that given that the process ends in $i$ sticks, the collection of stick lengths follow distribution $\mathcal{D}_i$, with cumulative distribution function $F_{\mathcal{D}_i}$ that is continuous and compactly supported. Namely, a random variable representing the length of a randomly chosen stick among the $i$ ending sticks follows $\mathcal{D}_i$. Then the weighted average has cumulative distribution function
    \[
    F\ =\ \frac{\sum_{i=1}^\infty i p_i F_{\mathcal{D}_i}}{\sum_{i=1}^\infty i p_i}.
    \]
    This sum converges because $\sum_{i=1}^\infty i p_i = \E[M_R]$, which is finite by \Cref{lem: r_small_stick_expectation}. Now it suffices to show our claim about $\mathcal{D}_i$. Fix $i$. There are a finite number of ways in which sticks die that result in a collection of $i$ sticks when the process ends. For each given configuration (i.e., sequence of dying of the sticks), the probability density function of the length of a particular final stick is a certain integral of the product of the density functions of the corresponding component for each ancestor of the break-point distribution. Averaging this over all configurations resulting in $i$ sticks, due to symmetry, every stick ends up having the same distribution. Therefore $\mathcal{D}_i$ is well-defined. Then take the limit as $R\to\infty$.
    Use \emph{Borel's law of large numbers}\footnote{See for example \textcolor{blue}{\url{https://en.wikipedia.org/wiki/Law\_of\_large\_numbers\#Borel's\_law\_of\_large\_numbers}  }.} to show that for a large enough collection of $i$'s, when $R$ is large enough, with probability 1, the proportion of trials ending in $i$ sticks is close to $p_i$; moreover, when $R$ is large enough, the cdf of the lengths within each group of trials resulting in $i$ sticks  is close to $F_{\mathcal{D}_i}$ with probability 1. Just need to show point-wise convergence!
\end{proof}

\begin{lemma} \label{lem: r_small_stick_expectation}
    We have that
    \begin{equation}
        \E[M_R] \ = \ R\frac{k-kr}{1-kr}
    \end{equation}
\end{lemma}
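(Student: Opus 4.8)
The plan is to compute $\E[M_R]$ by a level-by-level accounting of the dead sticks that are ``born'' at each step, which reduces the whole statement to linearity of expectation together with a geometric series. By symmetry and linearity it suffices to treat a single starting stick and multiply by $R$: the $R$ initial alive sticks generate i.i.d.\ subtrees, so $\E[M_R] = R\,\E[M_1]$ (indeed one can equally well run the argument below directly with $R$ initial sticks).

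For one starting (alive) stick, let $L_i$ be the number of alive sticks present at level $i$, so $L_0 = 1$. Each alive stick at level $i$ breaks into $k$ children, each alive with marginal probability $r$; hence, conditioning on $L_i$, the expected number of alive sticks at level $i+1$ is $kr\,L_i$, and by induction $\E[L_i] = (kr)^i$. Note this uses only the marginal law of each child's status, so it is valid whether or not the statuses of siblings are dependent. The dead sticks born in passing from level $i$ to level $i+1$ number, in expectation, $k(1-r)\,\E[L_i] = k(1-r)(kr)^i$, since each of the $L_i$ alive sticks contributes $k$ children of which $k(1-r)$ are dead on average. Every dead stick is born at exactly one such step, so $M_1$ equals the sum over $i\ge 0$ of the number of dead sticks born at step $i\to i+1$; all summands are nonnegative, so by the monotone convergence theorem
\[
\E[M_1] \ = \ \sum_{i=0}^{\infty} k(1-r)(kr)^i \ = \ \frac{k(1-r)}{1-kr} \ = \ \frac{k-kr}{1-kr},
\]
the series converging precisely because $kr<1$. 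Multiplying by $R$ gives the claimed identity.

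The only point needing care is that this computation is simultaneously establishing that $M_1$ (hence $M_R$) is integrable: a priori $M_1$ could be infinite on the event that the process never terminates, but the termination statement already noted when $r<1/k$ guarantees $M_1<\infty$ almost surely, and the monotone-convergence computation then pins down the finite value. An equivalent and equally short route is first-step analysis: writing $A\sim\mathrm{Binomial}(k,r)$ for the number of alive children of the root, one gets $\E[M_1] = \E[k-A] + \E[A]\,\E[M_1] = k(1-r) + kr\,\E[M_1]$ and solves, provided one already knows $\E[M_1]<\infty$; I prefer the level-sum argument since it yields both finiteness and the value at once. I expect no genuine obstacle beyond being explicit about this integrability/interchange-of-limits step.
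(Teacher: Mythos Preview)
Your proof is correct, and it takes a genuinely different route from the paper's. The paper argues by first-step analysis: conditioning on the number $A$ of alive children of the root (with $\E[A]=kr$), it writes the recursion $\E[M_1]=\sum_i p_i(i\E[M_1]+k-i)=k-kr+kr\,\E[M_1]$ and solves. You instead do a level-by-level count, showing $\E[L_i]=(kr)^i$ and summing the expected number of dead sticks produced at each step as a geometric series. Both are standard branching-process computations; your version has the small advantage that the monotone-convergence argument simultaneously establishes $\E[M_1]<\infty$ and computes its value, whereas the paper's recursion tacitly assumes integrability before solving. You already note the first-step approach as an alternative, so you have in fact identified both arguments.
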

\begin{proof}
    Let $p_i$ be the probability that exactly $i$ of the children of the first stick are alive. Then,
    \begin{equation}
    \begin{split}
        \E[M_1] \ &= \ \sum_{i=0}^k p_i (\E[M_i] + k-i) \ = \ \sum_{i=0}^k p_i(i\E[M_1] + k-i) \\
        &= \ k + (\E[M_1] - 1) \sum_{i=0}^k ip_i \ = \ k - kr + kr\E[M_1]
    \end{split}
    \end{equation}
    so that
    \begin{equation}
        \E[M_1] \ = \ \frac{k-kr}{1-kr}.
    \end{equation}
    Linearly of expectation then implies the result.
\end{proof}

\begin{corollary} \label{cor: r_small_sticks_small}
    We have that
    \begin{equation}
        M_R \ \le \ 2R\frac{k-kr}{1-kr}
    \end{equation}
    with probability at least $1/2$.
\end{corollary}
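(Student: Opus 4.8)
This is an immediate consequence of \Cref{lem: r_small_stick_expectation} together with Markov's inequality. The plan is simply to observe that $M_R$ is a nonnegative random variable (indeed a positive-integer-valued one, since the process terminates almost surely when $r < 1/k \le 1/k$), so that Markov's inequality applies directly. By \Cref{lem: r_small_stick_expectation} we have $\E[M_R] = R\frac{k-kr}{1-kr}$, which is finite and positive, so Markov gives
\begin{equation}
\prob\left( M_R > 2R\frac{k-kr}{1-kr} \right) \ = \ \prob\bigl( M_R > 2\E[M_R] \bigr) \ \le \ \frac{\E[M_R]}{2\E[M_R]} \ = \ \frac12.
\end{equation}
Taking complements yields $\prob\bigl( M_R \le 2R\frac{k-kr}{1-kr} \bigr) \ge \frac12$, as claimed.

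There is no real obstacle here: the only input is the exact mean computed in the preceding lemma, and everything else is the one-line Markov bound. The one point worth a brief remark is that $M_R$ is genuinely finite almost surely (so that the statement is not vacuous and $\E[M_R]$ is the honest mean of a finite-valued random variable); this is exactly the finite-termination fact established for the $r = 1/k$ case in the proof of \Cref{conj:contbreakwstopratio} and noted in the remark following \Cref{thm:contR>1/k} for $r < 1/k$, where the expected number of sticks is even smaller. If one wanted, one could also replace the constant $2$ by any $\lambda > 1$ at the cost of replacing $1/2$ by $1 - 1/\lambda$, but the stated form is all that is needed downstream.
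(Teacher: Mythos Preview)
Your proof is correct and matches the paper's own argument exactly: the paper simply states that the result follows directly from \Cref{lem: r_small_stick_expectation} and Markov's inequality.
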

\begin{proof}
    This follows directly from \Cref{lem: r_small_stick_expectation} and Markov's inequality.
\end{proof}

\begin{lemma} \label{lem: r_small_first_level}
    Let $a > 1$ be some real number and let $b_a$ be the expected number of child sticks that are of length at least $L/a$ starting from a stick of length $L$. With probability at least 
    \begin{equation}
        1 - \frac{4k^2}{b_a^2(1-r)^2R}
    \end{equation}
    the number of dead sticks in the first level of length at least $L/a$ is at least $b_a(1-r)R/2$.
\end{lemma}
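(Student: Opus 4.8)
The plan is a one-line second-moment (Chebyshev) argument, organized around the observation that the $R$ initial sticks form independent blocks. For the $j$-th initial stick ($1 \le j \le R$), let $D_j$ be the number of its $k$ children that simultaneously have length at least $L/a$ and are dead after the first level, and put $D = \sum_{j=1}^R D_j$, the quantity whose lower tail we want to control. Since the $R$ sticks break independently of one another, and since the (possibly dependent) alive/dead statuses only couple children of the \emph{same} parent, the random variables $D_1,\dots,D_R$ are independent and identically distributed. Each satisfies $0 \le D_j \le k$ deterministically.

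First I would compute $\E[D]$. Conditioning on the realized lengths of the $k$ children of stick $j$, each such child is dead with marginal probability $1-r$, so $\E[D_j \mid \text{child lengths}] = (1-r)\cdot \#\{\text{children of length} \ge L/a\}$; taking expectations and invoking the definition of $b_a$ gives $\E[D_j] = (1-r)b_a$, hence $\E[D] = (1-r)b_a R$. Next I would bound the variance crudely: since $0 \le D_j \le k$, we have $\Var[D_j] \le \E[D_j^2] \le k^2$, and by independence $\Var[D] = R\,\Var[D_1] \le k^2 R$.

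Chebyshev's inequality then finishes it: the bad event $\{D < \tfrac12\E[D]\}$ is contained in $\{|D - \E[D]| > \tfrac12\E[D]\}$, so
\[
\prob\!\left(D < \tfrac12\E[D]\right) \ \le \ \frac{\Var[D]}{\left(\tfrac12\E[D]\right)^2} \ \le \ \frac{4k^2R}{(1-r)^2 b_a^2 R^2} \ = \ \frac{4k^2}{(1-r)^2 b_a^2 R},
\]
and therefore with probability at least $1 - \tfrac{4k^2}{b_a^2(1-r)^2R}$ we have $D \ge \tfrac12\E[D] = b_a(1-r)R/2$, which is exactly the assertion.

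There is essentially no obstacle here; the two points that need a moment's care are (i) recognizing that the dependence among alive/dead statuses is confined to the children of a single initial stick, so the per-stick counts $D_j$ are genuinely i.i.d.\ and the variance splits additively, and (ii) being content with the lossy bound $\Var[D_j]\le k^2$, since this is precisely what produces the stated constant $4k^2$ (a sharper Popoviciu-type bound would only improve the constant and is not needed).
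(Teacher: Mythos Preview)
Your argument is correct and essentially identical to the paper's: both decompose the count as a sum of i.i.d.\ per-stick contributions $D_j$ with $0\le D_j\le k$, compute $\E[D]=b_a(1-r)R$, bound $\Var[D]\le k^2R$ via the crude $\Var[D_j]\le k^2$, and apply Chebyshev with threshold $\tfrac12\E[D]$. Your additional remark that the dependence among alive/dead statuses is confined within each parent's children (so the $D_j$ are genuinely independent) is a helpful clarification that the paper leaves implicit.
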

\begin{proof}
    Denote this quantity by $M^R_{L,a}(1)$. Then, the probability of a child with length at least $L/a$ being dead is $1-r$. Thus, 
    \begin{equation}
        \E[M^R_{L,a}(1)] \ = \ Rb_a(1-r).
    \end{equation}
    Note that $M^R_{L,a}(1)$ is a sum of independent random variables distributed identically to $M^1_{L,a}(1)$, and $\Var[M^1_{L,a}(1)] \le k^2$, so
    \begin{equation}
        \Var[M^R_{L,a}(1)] \ \le \ Rk^2.
    \end{equation}
    Chebyshev's inequality then implies
    \begin{equation}
        \prob\left(M^R_{L,a}(1) \le \frac{b_a}{2}(1-r)R \right) \ \le \ \frac{Rk^2}{(Rb_a(1-r)/2)^2} \ = \ \frac{4k^2}{b_a^2(1-r)^2R}.
    \end{equation}
\end{proof}

By \Cref{lem: r_small_first_level} and \Cref{cor: r_small_sticks_small}, the proportion of sticks with length at least $L/a$ is at least
\begin{equation}
    \frac{b_a}{2}(1-r)R\left( 2R\frac{k-kr}{1-kr} \right)^{-1} \ = \ \frac{b_a}{4}(1-r) \frac{1-kr}{k-kr} \ = \ \frac{b_a(1-kr)}{4k}
\end{equation}
with probability at least 
\begin{equation}
    \frac{1}{2} - \frac{4k^2}{b_a^2(1-r)^2R}.
\end{equation}
Now, choose $a = k$. Then, we have that, with some probability approaching $1/2$, at least a proportion of $b_k(1-kr)/(4k)$ of the sticks are of length at least $L/k$. Moreover, $b_k > 0$ so this proportion is positive. Let
\begin{equation}
    B \ > \ k^{\frac{4k}{b_k(1-kr)}}.
\end{equation}
Then, these sticks occupy an interval of length
\begin{equation}
    \log_B(k) \ < \ \frac{b_a(1-kr)}{4k}
\end{equation}
in the distribution of the normalized mantissas. It follows that the mantissas of the stick lengths do not almost surely approach a uniform distribution as $R \to \infty$. That is, the stick lengths do not approach Benford behavior.


\section{Discrete fragmentation with congruence stopping condition}
In this section, we consider the setting of discrete stick fragmentation, i.e., all stick lengths involved are positive integers. We present the proofs of \Cref{thm:evenStopBreaking}, where we start with a stick of odd integer length, break it in two each time and only continue breaking the new stick with odd length until we reach a stick of length 1, and \Cref{thm:discreteStopAtHalfResidues}, where we start with a collection of $R$ sticks and break each of them in two until a piece falls into certain residue classes or becomes length 1. When we take the appropriate limit in both of these scenarios, the ending collection of stick lengths becomes Benford. The overall strategy, adopted from that in \S 3 of \cite{B+}, is to approximate the discrete process with an appropriate continuous analogue, and by showing that the two processes are ``close'' in a precise sense, deduce the desired result from the corresponding continuous result.

\subsection{Proof of Theorem \ref{thm:evenStopBreaking}}
In order to carry out the approximation strategy outlined above, we define a continuous process and a discrete process based on the same sequence of random ratios, so that the latter is the process we are interested in and the former known to be Benford. Our goal is to show that that their end results are ``close'' enough so that the Benfordness of the former implies that of the latter. The two processes are defined as follows.
Let $(c_i)_{i\geq 0}$ be a sequence of random numbers chosen from $(0,1)$ with respect to the uniform distribution. 
\begin{itemize}
    \item Let $\mathcal{Q}$ denote the continuous process. In this process, we start with a stick of length $h_0 = L$. For each $i\geq 1$, break off a fragment of length $Y_i = c_{i-1}h_{i-1}$ at the $i$-th level, which becomes \emph{dead}, namely, stops breaking further. The other stick of length $h_i = h_{i-1} - Y_i=(1-c_{i-1})h_{i-1}$ stays \emph{alive} and continues to break in the next step.
    \item Let $\mathcal{P}$ denote the discrete process. In this process, we start with a stick of length $\ell_0 = L$. For each $i\geq 1$, break off a fragment of length $X_i = 2\lceil\frac{c_{i-1}(\ell_{i-1}-1)}{2}\rceil$ at the $i$-th level, which becomes \emph{dead}. Note that by construction, $X_i$ is an even integer taking values in $[2, \ell_{i-1}-1]$.The remaining stick of length $\ell_i = \ell_{i-1} - X_i$ (which is always an odd integer) stays \emph{alive} and continues to break in the next step. 
    \item Moreover, a stick in $\mathcal{P}$ also becomes dead if it has length 1. In that case, the corresponding stick in $\mathcal{Q}$ also dies.
\end{itemize}
We first derive the following lemma that bounds the length of a stick $X_k$ in $\mathcal{P}$ with the length of the corresponding stick $Y_k$ in $\mathcal{Q}$.
\begin{lemma}
\label{lem:XandYclose}
Given that $\ell_k, h_k > 2$, we have,
\begin{equation}
Y_{k} \prod_{i=1}^{k-1} \left( 1 - \frac{2}{\ell_{i}-2} \right) - 2 \ \le \ X_k \ \le \ Y_{k} \prod_{i=1}^{k-1} \left( 1 + \frac{2}{\ell_{i}-2} \right) + 2\prod_{i=1}^{k-1} \frac{\ell_i}{\ell_i-4}.
\end{equation}
\end{lemma}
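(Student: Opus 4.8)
The plan is to prove the two-sided bound by induction on $k$, tracking at each level the multiplicative and additive discrepancy between the fragment broken off in $\mathcal{P}$ and the one broken off in $\mathcal{Q}$. The key observation is that $X_k$ and $Y_k$ are built from the \emph{same} random numbers $c_{i-1}$, so the only sources of error are (i) the ceiling-and-doubling in the definition $X_i = 2\lceil c_{i-1}(\ell_{i-1}-1)/2\rceil$, which perturbs the cut by at most $2$ at each level, and (ii) the fact that the surviving lengths $\ell_i$ and $h_i$ drift apart as $i$ grows. First I would record the single-level estimate: since $\lceil x \rceil \in [x, x+1)$, we have $c_{i-1}(\ell_{i-1}-1) \le X_i \le c_{i-1}(\ell_{i-1}-1) + 2$, and in particular $c_{i-1}\ell_{i-1} - c_{i-1} \le X_i$ and $X_i \le c_{i-1}\ell_{i-1} + 2$. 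Subtracting from $\ell_{i-1}$ gives a comparison of the survivor $\ell_i = \ell_{i-1} - X_i$ with $(1-c_{i-1})\ell_{i-1} = h_i/\text{(scaling)}$, namely $(1-c_{i-1})\ell_{i-1} - 2 \le \ell_i \le (1-c_{i-1})\ell_{i-1} + c_{i-1} \le (1-c_{i-1})\ell_{i-1}+1$.

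Next I would set up the induction. Write $\ell_{i-1} = (1+\epsilon_{i-1}) h_{i-1}$ for the running relative error $\epsilon_{i-1}$ (with $\epsilon_0 = 0$), where here I identify $h_{i-1}$ with the length the alive stick would have in $\mathcal{Q}$; the claim is that $|\epsilon_{i-1}|$ stays controlled by $\prod(1 \pm \tfrac{2}{\ell_j - 2})$-type products together with an additive tail. Concretely, from the survivor estimate above one gets $\ell_i$ lies between $h_i(1+\epsilon_{i-1}) - 2$ and $h_i(1+\epsilon_{i-1}) + 1$, so after dividing by $h_i$ the relative error picks up at most an additive $2/h_i$, and since $h_i \ge \ell_i - \text{(small)}$ one can replace $h_i$ by $\ell_i$ at the cost of absorbing it into the $\tfrac{2}{\ell_i - 2}$ factor — this is exactly the shape of the factors in the statement, and the $\ell_i/(\ell_i-4)$ in the additive term comes from the same replacement applied to the accumulated additive error as it is carried multiplicatively through subsequent levels. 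Then $X_k = c_{k-1}(\ell_{k-1}-1) + \delta_k$ with $\delta_k \in [0,2)$ and $Y_k = c_{k-1} h_{k-1}$, so $X_k = Y_k(1+\epsilon_{k-1}) - c_{k-1} + \delta_k$, and plugging in the bounds on $\epsilon_{k-1}$ and on $|{-c_{k-1}+\delta_k}| \le 2$ yields the stated inequalities. The hypothesis $\ell_k, h_k > 2$ (and implicitly $\ell_i > 4$) is what keeps all the denominators positive and the telescoping products well-defined.

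The main obstacle I expect is bookkeeping: propagating the additive $O(1)$ error at each level correctly through the multiplicative distortions of all later levels, so that it aggregates into precisely $2\prod_{i=1}^{k-1}\frac{\ell_i}{\ell_i - 4}$ on the upper side and just the clean $-2$ on the lower side (the asymmetry arising because the ceiling only ever \emph{increases} $X_i$, so the lower deviation of the survivor is tamer). Getting the exponents and index ranges on the products to match the statement exactly — rather than an equivalent but differently-grouped bound — will require care, but conceptually it is a routine one-step-error-times-telescoping-product argument. No deep input is needed here; the lemma is purely a deterministic comparison, and the randomness only enters later when this bound is combined with the Benfordness of $\mathcal{Q}$.
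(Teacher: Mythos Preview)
Your proposal is correct and follows essentially the same approach as the paper: first control the multiplicative discrepancy between the survivors $\ell_i$ and $h_i$ via the one-step estimate $|d_i - c_i| \le 2/\ell_i$ (equivalently your bound on $\epsilon_i$), then transfer this to the fragments $X_k, Y_k$ via the final cut ratio. The paper organizes the argument slightly differently---writing $\ell_k = L\prod(1-d_i)$ and $h_k = L\prod(1-c_i)$ directly and using the helper inequality $\ell_i(1-c_i) \ge \ell_{i+1}-2$ to convert denominators, rather than your inductive $\epsilon$-tracking---but the content and the source of the asymmetric additive term are identical.
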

\begin{proof}
Let $d_k$ be the rounded version of $c_k$ used in $\mathcal{P}$, i.e., $d_k = X_{k+1}/\ell_k$. Then, note that
\begin{equation}
(\ell_k-1)c_k \ \le \ X_{k+1} \ \le \ (\ell_k-1)c_k + 2
\end{equation}
so that
\begin{equation}
\left( 1 - \frac{1}{\ell_k} \right) c_k \ \le \ d_k \ \le \ \left( 1 - \frac{1}{\ell_k} \right) c_k + \frac{2}{\ell_k} \quad \implies \quad |d_k - c_k| \ \le \ \frac{2}{\ell_k}.
\end{equation}
It follows that
\begin{equation}
    \ell_k \ = \ L\prod_{i=0}^{k-1} (1-d_i) \ \le \ L \prod_{i=0}^{k-1} \left( 1 - c_i + \frac{2}{\ell_i} \right) \ \le \ L\prod_{i=0}^{k-1} (1-c_i) \prod_{i=0}^{k-1} \left( 1 + \frac{2}{\ell_i(1-c_i)} \right).
\end{equation}
We have that 
\begin{equation}
    \ell_i(1-c_i) \ \ge \ \ell_i(1-d_i) - 2 \ = \ \ell_{i+1} - 2,
    \label{eq:helper}
\end{equation}
 so
\begin{equation} \label{upper_bound_ell_k}
    \ell_k \ \le \ L\prod_{i=0}^{k-1} (1-c_i) \prod_{i=0}^{k-1} \left( 1 + \frac{2}{\ell_{i+1}-2} \right) \ \le \ h_k\prod_{i=1}^{k} \left( 1 + \frac{2}{\ell_{i}-2} \right).
\end{equation}

Equation \eqref{eq:helper} also implies that $1-c_i-\frac{2}{\ell_i}\geq 1-c_i\left(1-\frac{2}{\ell_{i+1}-2}\right)$, so that 
\begin{equation} \label{lower_bound_ell_k}
    \ell_k \ \ge \ L \prod_{i=0}^{k-1} \left( 1 - c_i - \frac{2}{\ell_i} \right) 
    \ \ge \ L\prod_{i=0}^{k-1}\left[ (1-c_i)\left( 1 - \frac{2}{\ell_{i+1}-2} \right)\right]
    \ \ge \ h_k\prod_{i=1}^{k} \left( 1 - \frac{2}{\ell_{i}-2} \right).
\end{equation}

We can then multiply \eqref{upper_bound_ell_k} by $d_k$ to get
\begin{equation}
    X_{k+1} \ \le \ h_kd_k \prod_{i=1}^{k-1} \left( 1 + \frac{2}{\ell_{i+1}-2} \right) \ \le \ \left(Y_{k+1} + \frac{2h_k}{\ell_k} \right)\prod_{i=1}^{k-1} \left( 1 + \frac{2}{\ell_{i+1}-2} \right)
\end{equation}
and then use \eqref{lower_bound_ell_k} to obtain
\begin{align}
    X_{k+1} \nonumber \ &\le \ Y_{k+1} \prod_{i=1}^{k} \left( 1 + \frac{2}{\ell_{i}-2} \right) + 2\prod_{i=1}^{k} \left( 1 + \frac{2}{\ell_{i}-2} \right) \left( 1 - \frac{2}{\ell_{i}-2} \right)^{-1} \\
    \ &\le \ Y_{k+1} \prod_{i=1}^{k} \left( 1 + \frac{2}{\ell_{i}-2} \right) + 2\prod_{i=1}^{k} \frac{\ell_i}{\ell_i-4}.
\end{align}
We can reason similarly by multiplying \eqref{lower_bound_ell_k} with $d_k$ to obtain
\begin{equation}
    X_{k+1} \ \ge \ Y_{k+1} \prod_{i=1}^{k} \left( 1 - \frac{2}{\ell_{i}-2} \right) - 2.
\end{equation}
\end{proof}

Let $g(x)$ be a function that goes to infinity as $x\to\infty$ with $g(x)=o(\sqrt{\log(x)})$. Let $h(x)$ be a function that goes to infinity as $x\to\infty$. The following corollary of \Cref{lem:XandYclose} essentially says that $X_k$ and $Y_k$ are very close given that $k$ is not too large and $\ell_{k-1}$, $Y_k$ are large enough.
\begin{corollary}
\label{cor:XandYclose}
    For all $k<g(L)\log L$ such that $\ell_{k-1} > \log^2(L)+2$ and $Y_k > h(L)$, we have
    \begin{equation}
    Y_k(1-o(1)) \ \leq \ X_k \ \leq \ Y_k(1+o(1)).
    \end{equation}
    \begin{proof}
        By Lemma \ref{lem:XandYclose}, we have
        \begin{align}
            X_k 
            & \nonumber \ \le \ Y_{k} \prod_{i=1}^{k-1} \left( 1 + \frac{2}{\ell_{i}-2} \right) + 2\prod_{i=1}^{k-1} \frac{\ell_i}{\ell_i-4}\\ \nonumber 
            & \ \le \ Y_k\left(1+\frac{2}{\log^2(L)}\right)^{k-1}+2\left(\frac{\log^2(L)}{\log^2(L)-4}\right)^{k-1}\\ \nonumber 
            & \ \le \ Y_k\left(1+\frac{2}{\log^2(L)}\right)^{g(L)\log L}+2\left(1 + \frac{8}{\log^2(L)}\right)^{g(L)\log L}\\
            & \ \le \ Y_k \exp\left({\frac{2g(L)}{\log L}}\right) + 2\exp\left(\frac{8g(L)}{\log L}\right).
        \end{align}
        As $L\to\infty$, $\frac{g(L)}{\exp(L)}\to 0$, so $\exp\left({\frac{2g(L)}{\log L}}\right)\to 1$ and $2\exp\left(\frac{8g(L)}{\log L}\right)=O(1)$. Now by our assumption $Y_k\to\infty$, we get asymptotically that 
        \begin{equation}
            X_k \ \leq \ Y_k(1+o(1)).
        \end{equation}
        For the other inequality, apply Lemma \ref{lem:XandYclose} again to get
        \begin{align}
            X_k 
            & \nonumber  \ \ge \ Y_{k} \prod_{i=1}^{k-1} \left( 1 - \frac{2}{\ell_{i}-2} \right) - 2\\ \nonumber 
            & \ \ge \ Y_k\left(1+\frac{2}{\log^2(L)}\right)^{k-1}-2\\ \nonumber 
            & \ \ge \ Y_k\left(1-\frac{2}{\log^2(L)}\right)^{g(L)\log L}-2\\
            & \ \ge \ Y_k (2e)^{-{\frac{2g(L)}{\log L}}}-2.
        \end{align}
        Again $(2e)^{-{\frac{2g(L)}{\log L}}}\to 1$ as $L\to\infty$, and since $Y_k\to\infty$,
        we get asymptotically that 
        \begin{equation}
            X_k \ \geq \ Y_k(1-o(1)).
        \end{equation}.
    \end{proof}
\end{corollary}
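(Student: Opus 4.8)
The plan is to apply Lemma~\ref{lem:XandYclose} directly and check that, under the three hypotheses, every multiplicative correction factor it produces is $1+o(1)$ while the additive corrections are negligible next to $Y_k$. The first step I would take is to record that in the discrete process $\mathcal{P}$ the surviving-stick lengths are strictly decreasing, $\ell_0 > \ell_1 > \cdots$, since each break removes an even amount $X_i \ge 2$; therefore the hypothesis $\ell_{k-1} > \log^2(L)+2$ forces $\ell_i - 2 > \log^2(L)$ for all $1 \le i \le k-1$. This pins each factor occurring in Lemma~\ref{lem:XandYclose}: $1 - 2/\log^2 L \le 1 - 2/(\ell_i-2)$, $1 + 2/(\ell_i-2) \le 1 + 2/\log^2 L$, and $\ell_i/(\ell_i-4) \le 1 + 8/\log^2 L$ for $L$ large.

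The second step is to use the cap $k < g(L)\log L$ to bound the number of factors. For the upper product, $\prod_{i=1}^{k-1}(1 + 2/(\ell_i-2)) \le (1 + 2/\log^2 L)^{g(L)\log L} \le \exp(2g(L)/\log L)$ via $\log(1+x) \le x$; since $g(L) = o(\sqrt{\log L})$ — indeed $g(L) = o(\log L)$ would already suffice for this corollary — this tends to $1$, and the same estimate gives $2\prod_{i=1}^{k-1}\ell_i/(\ell_i-4) \le 2\exp(8g(L)/\log L) = O(1)$. For the lower product, $\prod_{i=1}^{k-1}(1 - 2/(\ell_i-2)) \ge (1 - 2/\log^2 L)^{g(L)\log L} \ge \exp(-4g(L)/\log L) \to 1$, using $1-x \ge e^{-2x}$ on $[0,1/2]$ (or Bernoulli). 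Substituting these into Lemma~\ref{lem:XandYclose} yields $Y_k(1-o(1)) - 2 \le X_k \le Y_k(1+o(1)) + O(1)$.

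The final step is to absorb the constants: since $Y_k > h(L) \to \infty$, both the additive $-2$ and the additive $O(1)$ are $o(Y_k)$, so they fold into the multiplicative error, giving $Y_k(1-o(1)) \le X_k \le Y_k(1+o(1))$. I do not expect any genuine obstacle here; the real content sits in Lemma~\ref{lem:XandYclose}, and the present corollary is a routine asymptotic cleanup of it. The only points needing mild care are that the $o(1)$ and $O(1)$ estimates be uniform in $k$ — they are, as $k$ enters only through the crude cap $k < g(L)\log L$ — and that the standing hypothesis $\ell_k, h_k > 2$ of Lemma~\ref{lem:XandYclose} holds throughout the regime under consideration, which is immediate from $\ell_{k-1} > \log^2 L + 2$ and $Y_k > h(L)$.
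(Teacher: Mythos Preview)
Your proposal is correct and follows essentially the same route as the paper: invoke Lemma~\ref{lem:XandYclose}, bound each factor in the products using $\ell_i-2>\log^2 L$ (you make the monotonicity $\ell_0>\ell_1>\cdots$ explicit, which the paper leaves implicit), cap the number of factors by $g(L)\log L$, and absorb the $O(1)$ additive terms using $Y_k>h(L)\to\infty$. The only cosmetic difference is in the elementary inequality used for the lower product (you use $1-x\ge e^{-2x}$ where the paper writes $(2e)^{-2g(L)/\log L}$), which is immaterial.
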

The following lemma then helps us translate Benfordness of $\{Y_i\}$ to that of $\{X_i\}$ given that they are close enough in the sense above. This is essentially \cite[Lemma 3.3]{B+}, but we give a different proof here.
Let $\{Z_i\}_L = \{Z_1,\dots,Z_{k_L}\}$ denote a finite sequence of random variables whose length $k_L$ depends on $L$. 
\begin{lemma}
\label{lem:smallPerturbPresBenford}
Suppose $\{Y_i\}_L = \{Y_1, Y_2, \dots, Y_{k_L}\}$ is strong Benford as $L \to \infty$. Then if $\{X_i\}_L = \{X_1, X_2, \dots, X_{k_L}\}$ is such that 
\begin{equation}Y_i(1 - o(1))\ \leq\ X_i\ \leq\ Y_i(1 + o(1))\end{equation} 
as $L\to\infty$, $\{X_i\}_L$ is strong Benford as $L \to \infty$.
\end{lemma}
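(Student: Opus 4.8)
The plan is to transfer Benfordness through the perturbation by bounding, for each threshold $s\in[1,B)$, the number of indices at which the indicators $\varphi_s(X_i)$ and $\varphi_s(Y_i)$ disagree. For such an $s$, write $P_L^X(s)$ and $P_L^Y(s)$ for the proportion of the $X_i$, respectively the $Y_i$, whose significand base $B$ is at most $s$. Setting $\sigma=\log_B s$, we have $\varphi_s(x)=1$ exactly when the mantissa $M_B(x)\in[0,\sigma]$, so a disagreement at index $i$ means that $M_B(X_i)$ and $M_B(Y_i)$ lie on opposite sides of $\sigma$ modulo $1$. A uniform multiplicative perturbation $Y_i(1-o(1))\le X_i\le Y_i(1+o(1))$ induces a uniform additive perturbation of $\log_B$, hence of the mantissa, so a disagreement can occur only when $M_B(Y_i)$ is within a vanishingly small distance of $\sigma$ (or of $0\equiv 1$). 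Since $\{Y_i\}_L$ is asymptotically Benford, its mantissas are equidistributed enough that only a vanishing proportion of them fall in such a shrinking set, which forces $P_L^X(s)$ and $P_L^Y(s)$ to share the limit $\log_B s$.

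Concretely, I would first dispose of the trivial cases $s=1$ and $s=B$ and assume $\sigma\in(0,1)$. Reading the hypothesis as the existence of a deterministic sequence $\delta_L\to 0$ with $Y_i(1-\delta_L)\le X_i\le Y_i(1+\delta_L)$ for all $1\le i\le k_L$ (which is exactly what \Cref{cor:XandYclose} supplies in the intended application), one gets $|\log_B X_i-\log_B Y_i|\le\eta_L$ with $\eta_L:=\log_B\frac{1}{1-\delta_L}\to 0$, uniformly in $i$. A short case analysis on the unique integer $m$ for which $M_B(X_i)-M_B(Y_i)+m\in(-\eta_L,\eta_L)$ — the case $m=0$ covering a genuine crossing of $\sigma$, the cases $m=\pm 1$ covering the wrap-around near $0\equiv 1$ — shows that $\varphi_s(X_i)\neq\varphi_s(Y_i)$ forces
\[
M_B(Y_i)\ \in\ E_L\ :=\ [0,\eta_L)\,\cup\,(\sigma-\eta_L,\sigma+\eta_L]\,\cup\,(1-\eta_L,1),
\]
a set of Lebesgue measure at most $4\eta_L$. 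Hence $\bigl|P_L^X(s)-P_L^Y(s)\bigr|\le R_L$, where $R_L$ is the proportion of indices $i$ with $M_B(Y_i)\in E_L$, a quantity depending only on the $Y$-process and satisfying $0\le R_L\le 1$.

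To conclude, fix a small $\epsilon>0$ (say $\epsilon<\min(\sigma,1-\sigma)$); for $L$ large enough $\eta_L<\epsilon$, so $E_L$ lies in the \emph{fixed} set $[0,\epsilon)\cup(\sigma-\epsilon,\sigma+\epsilon)\cup(1-\epsilon,1)$, and therefore $R_L$ is bounded, up to boundary terms of measure zero, by $Q_L(\epsilon)+\bigl(Q_L(\sigma+\epsilon)-Q_L(\sigma-\epsilon)\bigr)+\bigl(1-Q_L(1-\epsilon)\bigr)$, where $Q_L(t)$ is the proportion of the $Y_i$ with $M_B(Y_i)\le t$. Benfordness of $\{Y_i\}_L$ gives $Q_L(t)\to t$ for each fixed $t\in[0,1]$, so this upper bound tends to $\epsilon+2\epsilon+\epsilon=4\epsilon$; letting $\epsilon\to 0$ yields $P_L^X(s)-P_L^Y(s)\to 0$, and combining with $P_L^Y(s)\to\log_B s$ completes the proof. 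Since the bound $|P_L^X(s)-P_L^Y(s)|\le R_L$ is pathwise and $\E[R_L^2]\le\E[R_L]\to 0$, the argument applies verbatim whether ``strong Benford'' is taken in the deterministic sense, almost surely, or in the expectation/variance formulation of \Cref{def:benfordCriterion}. The main obstacle is precisely that $E_L$ shrinks as $L\to\infty$, so one cannot invoke the Benford hypothesis of $\{Y_i\}_L$ at a single fixed threshold; the two-stage limit (first $L\to\infty$ with $\epsilon$ fixed, then $\epsilon\to 0$) is what circumvents this, and the only further care needed is the uniformity in $i$ of the perturbation bound and the bookkeeping of the mantissa wrap-around near $0\equiv 1$.
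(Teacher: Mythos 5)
Your proposal is correct and follows essentially the same route as the paper's proof: a uniform multiplicative $1\pm o(1)$ perturbation shifts each mantissa by at most $o(1)$, so the indicator $\varphi_s$ can flip only at indices whose $Y$-mantissa lies within $o(1)$ of a threshold, and equidistribution of the $Y$-mantissas makes the proportion of such indices vanish. Your write-up is in fact tighter than the paper's at the two points it glosses over --- the wrap-around of the mantissa near $0\equiv 1$, and the fact that the Benford hypothesis is only available at fixed thresholds, which you handle with the two-stage limit in $\epsilon$ and $L$ --- so no changes are needed.
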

\begin{proof}
    We prove that $\log(X_i)\mod 1$ is equidistributed in $[0,1]$. For simplicity, define \begin{equation}\phi(x) \ := \ \log(x)\pmod 1\end{equation} for any $x>0$.
    By our assumption we have
    \begin{align}
    & \nonumber \log(Y_i) + \log(1-o(1)) \ \leq \ \log(X_i) \ \leq \ \log(Y_i) + \log(1+o(1)) \\ \nonumber
    & \implies \log(Y_i)-o(1) \ \leq \ \log(X_i) \ \leq \ \log(Y_i)+o(1) \\
    & \implies \phi(Y_i)-o(1) \ \leq \ \phi(X_i) \ \leq \ \phi(Y_i)+o(1)
    \end{align}
    with probability going to 1.
    For any $0\leq a<b\leq 1$,
    \begin{equation}
    \prob\left(a+o(1)<\phi(Y_i)<b-o(1)\right)
    \ \leq \
    \prob(a<\phi(X_i)<b)
    \ \leq \
    \prob(a-o(1)<\phi(Y_i)<b+o(1))
    \end{equation}
    with probability going to 1. But since $Y_i$ is strong Benford, we have that 
    \begin{equation}
    \prob\left(a+o(1) < \phi(Y_i) < b-o(1)\right) \ = \ b-a-o(1)
    \end{equation}
    and 
    \begin{equation}
    \prob\left(a-o(1) <\phi(Y_i)<b+o(1)\right) \ = \ b-a+o(1),
    \end{equation}
    so 
    \begin{equation}
    b-a-o(1) \ \leq \ \prob(a<\phi(X_i)<b) \ \leq \ b-a+o(1),
    \end{equation}
    which implies that $\prob(a<\phi(X_i)<b)\to b-a$ as $L\to\infty$ with probability going to 1.
\end{proof}
By \cite[Theorem 1.9]{B+}, the process $\mathcal{Q}$ is Benford. 
Given the lemma above, it now suffices to show that the premises of \Cref{cor:XandYclose} are satisfied for almost all $k$. The following lemma shows that the process ends within $g(L)\log L$ levels with probability going to 1, so the first condition that $k$ is not too large is almost always true.
\begin{lemma}
\label{lem:countfragments}
Let $F_L$ be the number of fragments generated by a stick of length $L$. As $L \to \infty$,
\begin{equation}
    \prob[(\log\log L)^2 < F_L < g(L)\log L]\ =\ 1-o(1).
\end{equation}
\end{lemma}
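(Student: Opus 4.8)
The plan is to reduce the lemma to elementary large-deviation bounds for a sum of i.i.d.\ $\mathrm{Exp}(1)$ random variables. Let $T_L$ be the number of breaks performed by the process $\mathcal{P}$, so that $F_L=T_L+1$; since both bounds in the statement are insensitive to an additive constant once $L$ is large, it suffices to control $T_L$ (with the obvious $\pm O(1)$ adjustments to land strictly inside the stated window). First I would record two deterministic comparisons read off directly from the update rule $X_{i+1}=2\lceil c_i(\ell_i-1)/2\rceil$, $\ell_{i+1}=\ell_i-X_{i+1}$. Setting $v_i:=\ell_i-1$ (a nonnegative \emph{even} integer, equal to $0$ exactly when the stick is dead), the inequality $X_{i+1}\ge c_i(\ell_i-1)$ gives $v_{i+1}\le(1-c_i)v_i$, hence
\begin{equation}
v_m \ \le \ (L-1)\prod_{i=0}^{m-1}(1-c_i);
\end{equation}
the rounding error has vanished thanks to the substitution $v_i=\ell_i-1$, which is really the only trick needed. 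In the other direction, $X_{i+1}\le c_i(\ell_i-1)+2$ gives $\ell_{i+1}\ge(1-c_i)\ell_i-2$, and iterating this (each partial product of the factors $(1-c_i)\in(0,1)$ being $\le 1$) yields $\ell_m\ge L\prod_{i=0}^{m-1}(1-c_i)-2m$ whenever $m\le T_L$.

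Next I would set $W_i:=-\log(1-c_i)$; these are i.i.d.\ $\mathrm{Exp}(1)$ since the $c_i$ are i.i.d.\ uniform on $(0,1)$, so $S_m:=\sum_{i=0}^{m-1}W_i\sim\mathrm{Gamma}(m,1)$. For the upper bound on $T_L$, the event $\{T_L>m\}$ forces $\ell_m\ge 3$, i.e.\ $v_m\ge 2$, so $(L-1)\prod_{i=0}^{m-1}(1-c_i)\ge 2$, i.e.\ $S_m\le\log\frac{L-1}{2}$; taking $m=\lfloor g(L)\log L\rfloor$ and the crude Chernoff bound $\prob[S_m\le t]\le e^{t}2^{-m}$ gives $\prob[T_L>m]\le\tfrac{L-1}{2}2^{-m}\le L^{\,1-(\log 2)g(L)+o(1)}\to 0$ since $g(L)\to\infty$. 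For the lower bound on $T_L$, if $T_L\le m$ the process dies at some level $\tau\le m$ with $\ell_\tau=1$, so the bound $\ell_\tau\ge L\prod_{i=0}^{\tau-1}(1-c_i)-2\tau$ forces $\prod_{i=0}^{\tau-1}(1-c_i)\le\frac{1+2\tau}{L}$, whence (using $\tau\le m$ and that all factors lie in $(0,1)$) $\prod_{i=0}^{m-1}(1-c_i)\le\frac{1+2m}{L}$, i.e.\ $S_m\ge\log\frac{L}{1+2m}$; taking $m=\lceil(\log\log L)^2\rceil$ and the Chernoff bound $\prob[S_m\ge t]\le e^{-t/2}2^{m}$ gives $\prob[T_L\le m]\le\sqrt{\tfrac{1+2m}{L}}\,2^{m}=L^{-1/2+o(1)}\to 0$, because $(\log\log L)^2=o(\log L)$ makes $2^{(\log\log L)^2}=L^{o(1)}$. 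Combining the two estimates (and noting $(\log\log L)^2<g(L)\log L$ for $L$ large) gives the lemma.

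I do not anticipate a real obstacle. The two points needing care are: (i) the deterministic comparisons must hold \emph{unconditionally}, which is exactly why I work with $v_i=\ell_i-1$ and the crude additive bound rather than the conditional estimates \eqref{upper_bound_ell_k}--\eqref{lower_bound_ell_k}, whose error factors $\prod\bigl(1+\tfrac{2}{\ell_i-2}\bigr)$ are only tame while the $\ell_i$ stay large; and (ii) in the lower-bound argument the iteration must be stopped at the death level $\tau$ rather than run past it (the sequence $(c_i)_{i\ge0}$ is defined unconditionally, so $S_m$ and the partial products make sense regardless, and one finishes via $\prod_{i=0}^{m-1}(1-c_i)\le\prod_{i=0}^{\tau-1}(1-c_i)$). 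Everything else is routine Chernoff bookkeeping, and in fact both failure probabilities turn out to be polynomially small in $L$ — considerably stronger than the $1-o(1)$ required.
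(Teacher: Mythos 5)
Your proof is correct, and it takes a genuinely different route from the paper's. The paper establishes the upper bound by computing $\E[F_L]$ exactly via the recurrence $\E[F_L]=\frac{2}{L-1}\sum_{\ell<L,\ \ell \text{ odd}}(1+\E[F_\ell])$, obtaining $\E[F_L]=1+2\sum_{0<j<L,\ j\text{ even}}\tfrac1j\sim\log L$ by induction and then applying Markov's inequality; for the lower bound it simply defers to \cite[Lemma 3.4]{B+}. You instead sandwich the discrete state between multiplicative products of the $(1-c_i)$ — the substitution $v_i=\ell_i-1$ making the lower comparison $v_{i+1}\le(1-c_i)v_i$ exact, and the crude additive $-2m$ handling the other direction unconditionally — and then read off both tails of $T_L$ from Chernoff bounds on $S_m\sim\mathrm{Gamma}(m,1)$ with $\E[e^{-W}]=\tfrac12$ and $\E[e^{W/2}]=2$. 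Each step checks out: the events $\{T_L>m\}$ and $\{T_L\le m\}$ are correctly translated into deterministic constraints on the unconditionally defined partial products, the choice $m=\lceil(\log\log L)^2\rceil$ works precisely because $2^{(\log\log L)^2}=L^{o(1)}$, and the $\pm O(1)$ between $F_L$ and $T_L$ is harmless. What your approach buys is a self-contained treatment of the lower bound (no appeal to the earlier paper) and polynomially small failure probabilities on both sides, versus the paper's $O(1/g(L))$ from Markov; what it gives up is the exact asymptotic $\E[F_L]\sim\log L$, which the paper's induction delivers as a byproduct.
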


\begin{proof}
We first show the upper bound using Markov's inequality. We prove by induction that
\begin{equation}
\label{eq:expectation}
    \E[F_\ell]\ =\ 1 + 2\sum_{\substack{0< j < \ell \\ j \text{ even}}}\frac{1}{j}.
\end{equation}
It is clear that $\E[F_1] = 1$. 
We have the recurrence
\begin{equation}
    \E[F_L]\ =\ \frac{2}{L-1} \sum_{\substack{\ell < L \\ \ell \text{ odd}}} (1 + \E[F_\ell])
\end{equation}
since there is a $\frac{2}{L-1}$ probability of breaking off a piece of length $\ell$ in the first break for $1 \le \ell \le L-1$ and $\ell$ odd. By the induction hypothesis, we have
\begin{align}
    \E[F_L] 
    & \nonumber \ =\ \frac{2}{L-1} \sum_{\substack{\ell < L \\ \ell \text{ odd}}} \left(1 + \left(1 + 2\sum_{\substack{0< j < \ell \\ j \text{ even}}}\frac{1}{j}\right)\right)\\ \nonumber
    &\ =\ \frac{2}{L-1} \cdot \frac{L-1}{2} + \frac{2}{L-1} \sum_{\substack{\ell < L \\ \ell \text{ odd}}} \left(1 + 2\sum_{\substack{0< j < \ell \\ j \text{ even}}}\frac{1}{j}\right) \\  \label{ln:rearrange} \nonumber
    &\ =\ 1 + \frac{2}{L-1}\left( \frac{L-1}{2}+ 2\sum_{\substack{0< j < L-2 \\ j \text{ even}}} \frac{\frac{L-j-1}{2}}{j}\right)\\ \nonumber
    &\ =\ 1 + \frac{2}{L-1}\left( 1 + \sum_{\substack{0< j < L-2 \\ j \text{ even}}} \left(1+ \frac{L-j-1}{j}\right)\right)\\ \nonumber
    &\ =\ 1 + \frac{2}{L-1} + \frac{2}{L-1}\sum_{\substack{0< j < L-2 \\ j \text{ even}}} \frac{L-1}{j}\\
    &\ =\ 1 + 2 \sum_{\substack{0< j < L \\ j \text{ even}}} \frac{2}{j},
\end{align}
where \eqref{ln:rearrange} follows from the previous step by observing that each $\frac{1}{j}$ is counted \begin{equation}\#\{l \text{ odd}:j<\ell<L\}\ =\ \frac{L-j-1}{2}\end{equation} many times. This completes the induction step, so we have shown \eqref{eq:expectation}.
Now since 
\begin{equation}
\sum_{\substack{0< j < L \\ j \text{ even}}}\frac{1}{j}\ \sim \ \frac{1}{2}\log(L/2),
\end{equation}
we have
\begin{equation}
    \E[F_L]\ \sim\ \log L+O(1).
\end{equation}
By Markov's inequality,
\begin{equation}
\prob(F_L>g(L)\log L)
\ \leq\ \frac{\log L+O(1)}{g(L)\log L}
\ =\ O\left(\frac{1}{g(L)}\right).
\end{equation}
The proof of the lower bound follows the exact same reasoning as the proof of Lemma 3.4 in \cite{B+}.
\end{proof}

\begin{corollary}
    \label{cor:almostallkaregood}
     Let $k_L$ be the total number of sticks when the process $\mathcal{P}$ ends. Let 
    \begin{equation}
    k_L'\ =\ |\{k: \ell_k\geq\log^3(L)\}|.
    \end{equation}
    Then with probability going to 1, 
    \begin{equation}
    \lim_{L\to\infty} \frac{k_L'}{k_L} \ = \ 1.
    \end{equation}
    Moreover, for all $k$ such that $\ell_k\geq\log^3(L)$, we have $Y_{k+1} \to \infty$ as $L \to \infty$ uniformly with probability going to $1$.
\end{corollary}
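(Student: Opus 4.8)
The two assertions are handled separately. For the proportion statement, observe that the alive stick in $\mathcal{P}$ has odd, strictly decreasing length, so $\{k:\ell_k<\log^3 L\}$ is a terminal segment $\{k^\ast,k^\ast+1,\dots\}$, where $k^\ast$ is the first index with $\ell_{k^\ast}<\log^3 L$ (such an index exists since the process ends at length $1$). Writing $\ell^\ast:=\ell_{k^\ast}$, the strong Markov property for the i.i.d.\ sequence $(c_i)$ shows that, conditionally on $\ell^\ast$, the remainder of the process is a fresh copy of $\mathcal{P}$ started from a stick of length $\ell^\ast<\log^3 L$; hence the number of short levels $k_L-k_L'$ has the law of $F_{\ell^\ast}$ (up to an $O(1)$ term coming from the convention for $k_L$). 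Since $\E[F_m]$ is nondecreasing in $m$ with $\E[F_m]=\log m+O(1)$ by \eqref{eq:expectation}, we get $\E[F_{\ell^\ast}\mid\ell^\ast]=O(\log\log L)$ uniformly over the admissible values of $\ell^\ast$, so Markov's inequality gives $\prob\!\big(k_L-k_L'\ge(\log\log L)^{3/2}\big)=O\!\big((\log\log L)^{-1/2}\big)\to 0$. Combining this with $k_L>(\log\log L)^2$, which holds with probability $1-o(1)$ by \Cref{lem:countfragments}, we obtain $(k_L-k_L')/k_L\le(\log\log L)^{-1/2}\to 0$, i.e.\ $k_L'/k_L\to 1$, on an event of probability $1-o(1)$.

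For the ``moreover'' statement, recall $Y_{k+1}=c_k h_k$ with the $c_k$ i.i.d.\ uniform on $(0,1)$. By \Cref{lem:countfragments} we may restrict to the event $\{k_L<g(L)\log L\}$, of probability $1-o(1)$; on it, a union bound over $0\le k<k_L$ shows that with probability at least $1-g(L)\log L\cdot\delta_L$, where $\delta_L:=(g(L)\log L\,\log\log L)^{-1}$, every such $c_k$ satisfies $c_k\ge\delta_L$, and $g(L)\log L\cdot\delta_L=(\log\log L)^{-1}\to 0$. Now fix any $k$ with $\ell_k\ge\log^3 L$. Since the $\ell_i$ decrease, $\ell_i\ge\log^3 L$ for all $i\le k$; in particular all the relevant $\ell_i$ exceed $2$, so \eqref{upper_bound_ell_k} applies, and since $k\le k_L<g(L)\log L=o(\log^3 L)$ the product there is $1+o(1)$, whence
\begin{equation}
h_k \ \ge \ \ell_k\Big(\prod_{i=1}^{k}\big(1+\tfrac{2}{\ell_i-2}\big)\Big)^{-1} \ \ge \ \ell_k(1-o(1)) \ \ge \ \log^3 L\,(1-o(1)).
\end{equation}
Hence $Y_{k+1}=c_k h_k\ge\delta_L\log^3 L\,(1-o(1))=\dfrac{\log^2 L}{g(L)\log\log L}(1-o(1))$, which tends to $\infty$ since $g(L)=o(\sqrt{\log L})$, and this lower bound is uniform in $k$.

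The computations above are routine; the one point that needs care is that the crossing length $\ell^\ast$ in the first part is itself random and depends on the history of the process. This is dealt with by conditioning on $\ell^\ast$ and using that the bound $\E[F_m]=\log m+O(1)$ is uniform over all $m\le\log^3 L$, so the Markov estimate does not see this dependence. Finally, all the events invoked---the range for $k_L$ from \Cref{lem:countfragments}, the tail bound on $k_L-k_L'$, and the union bound on the $c_k$---are finitely many events of probability $1-o(1)$, so their intersection again has probability $1-o(1)$.
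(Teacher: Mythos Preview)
Your proof is correct and follows essentially the same approach as the paper's. For the ratio statement, both arguments identify the short-stick indices as a terminal segment and bound its length via \Cref{lem:countfragments}; the paper applies the high-probability upper bound of that lemma to the subprocess started below $\log^3 L$, while you use the expectation estimate \eqref{eq:expectation} together with Markov's inequality, which is a minor and equally valid variant. For the ``moreover'' part, both arguments lower-bound all relevant $c_k$ by a threshold $\delta_L$ via a union bound over the at most $g(L)\log L$ levels; the paper then bounds $X_{k+1}\ge c_k(\ell_k-1)$ and transfers to $Y_{k+1}$ through \Cref{lem:XandYclose}, whereas you go directly through $h_k$ via \eqref{upper_bound_ell_k} and compute $Y_{k+1}=c_k h_k$. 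Your route here is slightly more direct, but the idea is the same.
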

\begin{proof}
    The following argument is essentially the same as the one given in the proof of \cite[Corollary 3.5]{B+}. We include it here for completeness.
    Note that $k_L - k_L'$ is the number of sticks generated after $\ell_k$ first becomes smaller than $\log^3(L)$, and is thus upper bounded by $\log(\log^3(L))g(\log^3(L))$ with probability going to 1 by \Cref{lem:countfragments}. On the other hand, $k_L>(\log\log L)^2$ with probability going to 1. Therefore as $g(L)=o(\sqrt{\log (L)})$,
    \begin{equation}
    \lim_{L\to\infty}\frac{k_L'}{k_L}\ =\ 1-\lim_{L\to\infty}\frac{k_L-k_L'}{k_L}\ >\ 1- \frac{\log(\log^3(L))g(\log^3(L))}{(\log\log (L))^2}\ =\ 1
    \end{equation}
    with probability going to 1. 
    To prove the second part of the Corollary, note that, for $k$ such that $\ell_k \ge \log^3(L)$,
    \begin{equation}
    c_{k} \ge \frac{1}{g(L)\log^2(L)} \quad \implies \quad X_{k+1} \ge \frac{\log L}{g(L)}
    \end{equation}
    which approaches infinity. The probability of the former occurring for all such $k$ is
    \begin{equation}
    \left( 1 - \frac{1}{g(L)\log^2(L)} \right)^{g(L)\log L} \ = \ (1 - o(1))e^{-\frac{1}{\log L}} \ \to \ 1.
    \end{equation}
    Thus we immediately deduce the same holds for $Y_{k+1}$ in view of \Cref{lem:XandYclose}. This completes the proof.
\end{proof}

We have verified that all conditions required in \Cref{cor:XandYclose} are satisfied with probability going to 1, so we are done.

\subsection{Proof of Theorem \ref{thm:discreteStopAtHalfResidues}}
For any integer $\ell>1$, $r\in\{0,\dots,n-1\}$, let 
\begin{equation}
p_r(\ell) \ = \ \frac{|(n\Z+r)\cap [1,\dots,\ell-1]|}{\ell-1}.
\end{equation}
In other words, $p_r(\ell)$ is the proportion of integers between $1$ and $\ell-1$ falling into the residue class $r$ modulo $n$. Note that
\begin{equation}
\frac{1}{n} - \frac{1}{\ell-1} \ \le \ p_r(\ell) \ \le \ 
\frac{1}{n} + \frac{1}{\ell-1}
\end{equation}
for all $r$.
Define a discrete distribution $\mathcal{D}_\ell$ on $\{0,\dots,n-1\}$ by
\begin{equation}
\prob(X_\ell = r) \ = \ p_r(\ell). 
\end{equation}

Fix starting stick length $L\in\Z_+\backslash\mathfrak{S}$. We define a discrete process $\mathcal{P}$, and a continuous process $\mathcal{Q}$ that depends on $\mathcal{P}$ as follows. 
\begin{itemize}
    \item In both processes, we start with a stick of the same integer length $L>1$. Both starting sticks are assumed to be alive. (Since we are defining the process recursively, assume that at the start of each level, every living stick in $\mathcal{Q}$ uniquely corresponds to a living stick in $\mathcal{P}$ and vice versa. This is clearly true in the first level. We will see from our construction that this property is always preserved.)
    \item At each level, for each living stick in $\mathcal{P}$ of length $\ell$, choose a random ratio $p \in (0, 1)$ uniformly and a residue class $r\in\{0,\dots,n-1\}$ with respect to the distribution $\mathcal{D}_\ell$. Suppose $m = |(n\Z+r)\cap [1,\dots,\ell-1]|$. Let $X$ be the $(\lfloor mp \rfloor +1)$-th smallest integer in $[1,\dots,\ell-1]$ with residue $r$ modulo $n$. 
    \item Cut the stick in $\mathcal{P}$ into pieces of lengths $X$ and $\ell-X$, and cut the corresponding stick (of length $h$) in $\mathcal{Q}$ into pieces of lengths $ph$ and $(1-p)h$.
    \item Now, in process $\mathcal{P}$,  any new stick generated becomes dead if its length is in $\mathfrak{S}$, and in this case the corresponding stick in $\mathcal{Q}$ dies, too.  
    \item Continue to the next level until all sticks die. 
\end{itemize}
By choosing the ratio $p$ and the residue class $r$ of $X$ independently, we ensure that dead/alive status of a new stick in either process is independent of the ratio $p$ used to generate its length. In particular, in the continuous process, the probability that a new stick dies is always close to $1/2$ with an error of at most $\frac{n+4}{2(\ell-1)}$ (sum over $n/2$ residues and then an error of $\frac{2}{\ell-1}$ to account for stopping at length $1$).

We want to argue the following:
\begin{enumerate}
    \item The continuous process $\mathcal{Q}$ thus constructed is ``close'' to the process in \Cref{conj: contbreaks_dependent}, and thus results in strong Benford behavior.
    \item For almost all pairs of corresponding ending sticks $X_k$, $Y_k$ in $\mathcal{P}$, $\mathcal{Q}$ respectively, we have
    \begin{equation}
    Y_k(1-o(1)) \ \leq \ X_k \ \leq \ Y_k(1+o(1))
    \end{equation}
    as $L\to\infty$,
    so that we can apply \Cref{lem:smallPerturbPresBenford} to argue that $\mathcal{P}$ is Benford.
\end{enumerate}

\subsubsection{Proof of First Item}

\begin{lemma} \label{lem:live_stick_lower_bound}
    Let $T_i$ be the number of living sticks at level $i$ of length at least $\frac{L}{(\log L)^i}$. Given that $L > (n+5)(\log L)^{j}$, $\log L > 10j$, and $R > 2(\log L)^2$, we have that
    \begin{equation}
    \prob\left(T_i \ge R\left( 1 - \frac{5i}{\log L} \right) \ \forall \ 0 \le i \le j\right) \ \ge \ \left(1 - \frac{2(\log L)^2}{R} \right)^j \ \ge \ 1 - \frac{2j(\log L)^2}{R}.
    \end{equation}
\end{lemma}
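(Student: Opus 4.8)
The idea is to track, level by level, the number $T_i$ of living sticks that are still ``large'' (length at least $L/(\log L)^i$), and to show that at each level this population loses only a small fraction of its members. The two ways a large living stick at level $i$ can fail to contribute a large living stick at level $i+1$ are: (a) both of its children die (or get absorbed into $\mathfrak S$), which happens with probability close to $1/4$ by the near-$1/2$ death probability established above, so typically \emph{one} child survives; and (b) the surviving child is too short, i.e.\ shorter than $L/(\log L)^{i+1}$. Since we cut at a uniform ratio $p\in(0,1)$ (up to the negligible discrete rounding), the surviving child of a stick of length $\ell \ge L/(\log L)^i$ has length at least $\ell/\log L \ge L/(\log L)^{i+1}$ unless the ratio $p$ lands in an interval of length $\le 1/\log L$ near one endpoint; combined with the fact that the death events are independent of $p$ (by construction of $\mathcal P$), the conditional probability that a fixed large living stick fails to produce a large living child is at most roughly $1/2 + 1/\log L + (\text{error terms from }p_r(\ell))$, but crucially the probability that it produces \emph{at least one} large living child is at least $1/2 - O(1/\log L)$; here I would be careful to extract the right constant so that the per-level loss factor is $1 - 5/\log L$ rather than something weaker.

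\textbf{Key steps, in order.} First, fix the level $i$ and condition on the configuration at level $i$, in particular on the event $\{T_i \ge R(1 - 5i/\log L)\}$. For each of these $T_i$ large living sticks, let $B_\ell$ be the indicator that it produces at least one large living child at level $i+1$. These indicators are independent across distinct sticks at level $i$ (the randomness used for different sticks is independent), each with $\E[B_\ell] \ge 1 - 5/\log L + (\text{small positive slack})$ once we use $L > (n+5)(\log L)^j$ and $\log L > 10j$ to control the $p_r(\ell)$ errors and the length bound $\ell \ge L/(\log L)^i \ge (n+5)(\log L)^{j-i} \ge (n+5)$. Second, $T_{i+1} \ge \sum_\ell B_\ell$, a sum of $T_i$ independent Bernoullis; apply Chebyshev (variance at most $T_i/4 \le R$) to get that $\sum_\ell B_\ell$ is within $(\log L)^2$ of its mean $\ge T_i(1 - 5/\log L)$ except with probability $\le R/(\log L)^4 = 1/(\log L)^4 \cdot R$... more precisely $\Var/((\log L)^2)^2 \le R/(\log L)^4$, which under $R > 2(\log L)^2$ I would instead bound by $2(\log L)^2 / R$ after choosing the deviation threshold correctly — this bookkeeping is exactly what produces the stated $1 - 2(\log L)^2/R$ factor. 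Third, on that event, $T_{i+1} \ge (T_i - (\log L)^2)(1 - 5/\log L) \ge R(1 - 5i/\log L)(1 - 5/\log L) - (\log L)^2 \ge R(1 - 5(i+1)/\log L)$, where the last inequality uses $R > 2(\log L)^2$ to absorb the $-(\log L)^2$ term and drops the positive cross term. Fourth, induct on $i$ from $0$ to $j$: the event at level $0$ holds deterministically ($T_0 = R$), and each level contributes one factor of $(1 - 2(\log L)^2/R)$, so intersecting over $i = 0, \dots, j-1$ gives $(1 - 2(\log L)^2/R)^j \ge 1 - 2j(\log L)^2/R$ by Bernoulli's inequality.

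\textbf{Main obstacle.} The delicate point is the lower bound $\E[B_\ell] \ge 1 - 5/\log L$ on the probability that a large living stick produces at least one large living child. This requires simultaneously: (i) lower-bounding the probability that at least one child survives the death/absorption step by $1/2 - O(1/\ell) \ge 1/2 - O((\log L)^{-j+i})$, using that the death probability of each child is within $\frac{n+4}{2(\ell-1)}$ of $1/2$ \emph{and} is independent of the cut ratio $p$; (ii) showing that, conditioned on a specific child surviving, that child is large with probability $\ge 1 - O(1/\log L)$, which follows because the cut ratio is uniform up to the discrete rounding of size $O(1/\ell)$, so the ``short child'' event has probability at most $\frac{1}{\log L} + O(1/\ell)$; and (iii) combining (i) and (ii) without double-counting when both children survive — here one should note that if both children survive, at least one of them has length $\ge \ell/2 \ge L/(\log L)^{i+1}$ automatically, so that case only helps. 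Tracking all these $O(1/\ell)$ and $O(1/\log L)$ errors and checking that their sum is genuinely below $5/\log L$ under the hypotheses $L > (n+5)(\log L)^j$ and $\log L > 10j$ is the real content; everything after that is the routine Chebyshev-plus-induction engine sketched above. I expect the constant $5$ (as opposed to, say, $3$) is chosen precisely to give comfortable slack for these accumulated errors, so I would not try to optimize it.
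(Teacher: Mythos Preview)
Your overall architecture (induction on $i$, per-parent contribution, Chebyshev, telescoping the loss factor) matches the paper, but there is a genuine error in the per-parent step that breaks the argument as written.

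You define $B_\ell$ as the \emph{indicator} that a large living parent produces \emph{at least one} large living child, and then assert $\E[B_\ell]\ge 1-5/\log L$. This is false. Each child survives (i.e.\ stays alive) with probability close to $1/2$, so the probability that at least one child survives is close to $3/4$, not close to $1$; even after intersecting with the ``large'' event you cannot push $\E[B_\ell]$ above $3/4+o(1)$. You actually acknowledge this yourself in the plan (``both children die \dots\ with probability close to $1/4$'' and ``at least $1/2-O(1/\log L)$''), which directly contradicts the ``$\ge 1-5/\log L$'' claim in the key steps. With the correct value $\E[B_\ell]\approx 3/4$, your bound $T_{i+1}\ge\sum_\ell B_\ell$ only yields $T_{i+1}\gtrsim (3/4)T_i$, which decays geometrically and cannot give $T_j\ge R(1-5j/\log L)$.

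The fix is to replace the indicator by the \emph{count}: let $C_\ell\in\{0,1,2\}$ be the number of large living children of parent $\ell$, so that $T_{i+1}\ge\sum_\ell C_\ell$. By linearity, $\E[C_\ell]=2\cdot\prob(\text{a given child is alive and large})\ge 2\bigl(\tfrac12-\tfrac{3}{2\log L}\bigr)=1-\tfrac{3}{\log L}$, using exactly the estimates you already identified in (i) and (ii). The variance of $C_\ell$ is trivially at most $4$, the $C_\ell$'s are independent across parents, and Chebyshev with deviation $2T_i/\log L$ gives the failure probability $\le (\log L)^2/T_i\le 2(\log L)^2/R$. This is precisely what the paper does; once you make this one change from $B_\ell$ to $C_\ell$, the rest of your induction and bookkeeping goes through.
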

\begin{proof}
    We proceed with induction on $j$. The result is clearly true for $j =0$ since $T_0 = R$. We show the result for $j$ implies that for $j+1$. From now on we condition on the history up to the $j$-th level. Consider a stick at level $j$ of length at least $\frac{L}{(\log L)^j}$. For each of its children, the probability of being shorter than $\frac{L}{(\log L)^{j+1}}$ is at most $\frac{1}{\log L}$ and the probability of being alive is at least
    \begin{equation}
    \frac{1}{2} - \frac{n+4}{2\left( \frac{L}{(\log L)^j} - 1 \right)} \ \ge \ \frac{1}{2} - \frac{n+4}{2(n+4)\log L} \ = \ \frac{1}{2} - \frac{1}{2\log L}.
    \end{equation}
    So the probability that the child is both of length at least $\frac{L}{(\log L)^{j+1}}$ and active is bounded below by
    \begin{equation}
    \prob(\text{alive}) - \prob\left(\text{length} \le \frac{L}{(\log L)^{j+1}}\right) \ge \frac{1}{2} - \frac{1}{2\log L} - \frac{1}{\log L} = \frac{1}{2} - \frac{3}{2\log L}.
    \end{equation}
    Let $T_{j+1}'$ be the number of live sticks at level $j+1$ of length at least $\frac{L}{(\log L)^{j+1}}$ whose parent is of length at least $\frac{L}{(\log L)^{j}}$. Since there are $T_j$ such parents generating $2T_j$ children in total, summing the above probability over each child, we have that
    \begin{equation}
    \E(T_{j+1}'|T_j) \ \ge \ 2T_j \left( \frac{1}{2} - \frac{3}{2\log L} \right) \ \ge \ T_j \left( 1 - \frac{3}{\log L}\right).
    \end{equation}
    Moreover, for each parent, the variance of the number of its active children that are of length at least $\frac{L}{(\log L)^{j+1}}$ is at most $2^2 = 4$ since it has at most 2 children in total. Also, each sub-process starting from one of these parents is independent from another, so the total variance $\Var(T_{j+1}') \le 4T_j$. Then, conditioning on the history of the process up to the $j$-th level, by Chebyshev's inequality,
    \begin{equation}
    \prob\left(T_{j+1} < T_j \left( 1 - \frac{5}{\log L} \right) \right) \ \le \ \prob\left( |T_{j+1}'-\E(T_{j+1}')| >  T_j \frac{2}{\log L} \right) \ < \ \frac{4T_j}{\frac{4T_j^2}{(\log L)^2}} \ < \ \frac{2(\log L)^2}{R},
    \end{equation}
    where the last inequality is true with probability $\left(1 - \frac{2(\log L)^2}{R} \right)^j$ by the induction hypothesis. This implies
    \begin{equation}\prob\left(T_{j+1} \ \ge\ T_j \left( 1 - \frac{5}{\log L} \right) \right)
    \ \ge\ 1-\frac{2(\log L)^2}{R}.\end{equation}
    Notice that
    \begin{equation}
    \left(1-\frac{5j}{\log L}\right)\left(1-\frac{5}{\log L}\right) \ >\ 1-\frac{5(j+1)}{\log L},
    \end{equation}
    so that 
    \begin{equation}
    T_{j+1}\ \ge\  T_j \left( 1 - \frac{5}{\log L} \right)
    \implies T_{j+1} \ \ge\  R\left( 1 - \frac{5(j+1)}{\log L} \right)
    \end{equation}
    given that $T_j\geq R(1-\frac{5j}{\log L})$.
    Thus we have 
    \begin{equation}
    \prob\left(T_{j+1} \ \ge\  R\left( 1 - \frac{5(j+1)}{\log L} \right) \right) \
    \ge\ 1-\frac{2(\log L)^2}{R}
    \end{equation}
    given that $T_j\geq R(1-\frac{5j}{\log L})$.
    This completes the induction step.
\end{proof}

\begin{corollary}
\label{cor:numStickLower}
    For sufficiently large $L$ and $R > (\log L)^3$, the number of live sticks ever is bounded below by
    \begin{equation}
    R \sqrt{\log L}
    \end{equation}
    with probability at least
    \begin{equation}
    1 - \frac{4(\log L)^{5/2}}{R}.
    \end{equation}
    This is also a lower bound for the number of dead sticks ever, with the same probability.
\end{corollary}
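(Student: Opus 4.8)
The plan is to apply \Cref{lem:live_stick_lower_bound} with $j = \lfloor 2\sqrt{\log L}\rfloor$ and to add up the resulting lower bounds for $T_0,\dots,T_j$. The first step is to check that, for $L$ large, this $j$ satisfies all three hypotheses of \Cref{lem:live_stick_lower_bound}: the condition $L > (n+5)(\log L)^{j}$ is equivalent to $\log L > j\log\log L + \log(n+5)$, which holds because $j\log\log L \le 2\sqrt{\log L}\,\log\log L = o(\log L)$; the condition $\log L > 10j$ holds since $10j \le 20\sqrt{\log L}$; and $R > 2(\log L)^2$ follows from the hypothesis $R > (\log L)^3$. Let $\mathcal{E}$ be the event that $T_i \ge R\bigl(1 - \tfrac{5i}{\log L}\bigr)$ for all $0 \le i \le j$; by the lemma, $\prob(\mathcal{E}) \ge 1 - \tfrac{2j(\log L)^2}{R} \ge 1 - \tfrac{4(\log L)^{5/2}}{R}$, using $j \le 2\sqrt{\log L}$.

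On $\mathcal{E}$, since $j+1 \ge 2\sqrt{\log L}$ and $\tfrac{5j}{2\log L} \le \tfrac{5}{\sqrt{\log L}}$, we obtain
\begin{equation}
\sum_{i=0}^{j} T_i \ \ge \ R\sum_{i=0}^{j}\left(1 - \frac{5i}{\log L}\right) \ = \ R(j+1)\left(1 - \frac{5j}{2\log L}\right) \ \ge \ 2R\sqrt{\log L}\left(1 - \frac{5}{\sqrt{\log L}}\right) \ \ge \ R\sqrt{\log L}
\end{equation}
once $\log L \ge 100$. The next step is to note that the $T_i$ count pairwise disjoint families of live sticks --- a stick alive at level $i$ is consumed when it breaks, hence is a different object from any stick alive at another level --- so $\sum_{i=0}^{j} T_i$ is a lower bound for the total number of live sticks ever. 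This gives the first assertion on the event $\mathcal{E}$.

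For the second assertion I would use the bookkeeping identity $D = A + R$, where $A$ is the number of sticks that ever break (equivalently, that are ever alive, since in the discrete process each lineage strictly decreases in length and hence terminates, so every alive stick breaks exactly once) and $D$ is the number of sticks that ever die: of the $2A$ non-initial sticks, exactly $A - R$ break (all $R$ initial sticks break, since $L \notin \mathfrak{S}$) and the remaining $D$ die, whence $2A = (A-R)+D$. Thus on $\mathcal{E}$ the number of dead sticks ever equals $A + R \ge A \ge R\sqrt{\log L}$. I expect no serious obstacle here; the only point requiring a little care is the triple constraint on $j$ --- small enough that \Cref{lem:live_stick_lower_bound} applies and that the failure probability stays below $\tfrac{4(\log L)^{5/2}}{R}$, yet large enough that $\sum_{i\le j} T_i \ge R\sqrt{\log L}$ --- which $j = \lfloor 2\sqrt{\log L}\rfloor$ is chosen precisely to meet.
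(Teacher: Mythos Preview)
Your proposal is correct and follows essentially the same approach as the paper: both apply \Cref{lem:live_stick_lower_bound} with $j = \lfloor 2\sqrt{\log L}\rfloor$, sum the lower bounds $T_i \ge R(1-5i/\log L)$ over $0\le i\le j$ to get at least $R\sqrt{\log L}$ live sticks across levels, and relate dead to live via the same bookkeeping (the paper writes it as $\sum_i(2n_{i-1}-n_i)=2n_0+n_1+n_2+\cdots$, which is exactly your $D=A+R$). Your write-up is in fact more careful than the paper's in verifying the three hypotheses of the lemma and in carrying out the summation explicitly.
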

\begin{proof}
    Let $n_i$ be the number of active sticks at level $i$.
    First, note that the number of dead sticks generated at level $i$ is $2n_{i-1} - n_i$, and summing this from $i = 1$ to infinity yields $2n_0 + n_1 + n_2 + \cdots$ which is bounded below by the total number of live sticks in all levels. Now our lower bound follows from \Cref{lem:live_stick_lower_bound} by taking $j = \lfloor 2 \sqrt{\log L} \rfloor$ 
    and $L$ large enough so that all assumptions there hold and that 
    \begin{equation}
    \frac{5\cdot\lfloor 2 \sqrt{\log L} \rfloor}{\log L} \ < \ \frac{1}{2}.
    \end{equation}
\end{proof}

Let $M$ be the total number of dead sticks.
We have that, with probability going to 1, $M \ge R\sqrt{\log L}$. Now, we wish to show that $\E[P_{R,L}(s)] \to \log_B(s)$. It suffices to show that $\E[\varphi_s(X_i)] \to \log_B(s)$ uniformly for a proportion of $X_i$ going to $1$. We first show that almost all sticks die after $\frac{1}{2}\log\log L$ levels. First, note that there are at most $R2^i$ alive sticks at level $i$ and thus at most $R2^{i-1}\cdot 2 = R2^i$ new dead sticks are generated at level $i$. Thus, the number of dead sticks generated at or before level $j$ is $\sum_{i=1}^j R2^i \le R2^{j+1}$. Thus, the number of sticks before level $\frac{1}{2}\log \log L$ is at most
\begin{equation}
2R \cdot 2^{(\log \log L)/2} \ = \ 2R(\log L)^{(\log 2)/2} \ = \ o(R\sqrt{\log L}).
\end{equation}
That is, the proportion of sticks before level $\frac{1}{2}\log \log L$ goes to $0$. Thus, we may assume that $X_i$ dies at a later level. However, it is a product of independent random variables, each chosen from some finite set. Moreover, the length of this product is increasing in $L$, so by \Cref{thm:Mellin_transform_condition_continuity}, $\E[\varphi_s(X_i)]$ approaches $\log_B(s)$ uniformly, and the conclusion follows.

We now wish to show that $\Var[P_{R,L}(s)] \to 0$. The same strategy as in the proof of \Cref{conj:contbreakwstopratio} works with slight modifications that we highlight below.  Recall that the goal is to show that
\begin{equation}
\frac{1}{M^2}\sum_{i, j} \E[\varphi_s(X_iX_j)] \ \to \ \log_B(s)^2,
\end{equation}
where $X_i,X_j$ denote a pair of dead sticks.
Based on our observation above, we may restrict our attention to the collection of pairs of sticks only involving those that die after at least $\frac{1}{2}\log\log L$ levels. Note that running the exact same argument as in the proof of \Cref{conj:contbreakwstopratio} with $k=2$, we obtain that the number of pairs with high dependency (as describe in (2) in that proof) is bounded above by
$M(\log R)^{1 + \log 2}=o(M^2)$, so we are done.

\subsubsection{Proof of Second Item}

\begin{lemma}
\label{lem:ratiosErrorBound}
    At each level of $\mathcal{P}$, given that a stick of length $\ell$ breaks into sticks of lengths $X$ and $\ell-X$, with ratio $p$ in process $\mathcal{Q}$, we have
    \begin{equation}
        \left|\frac{X}{\ell}-p\right| \ \leq \ \frac{n+1}{\ell}.
    \end{equation}
    This also implies that 
    \begin{equation}
        \left|\frac{\ell-X}{\ell}-(1-p)\right| \ \leq \ \frac{n+1}{\ell},
    \end{equation}
    so we have the same bound for the error between the corresponding ratios in $\mathcal{P}$ and $\mathcal{Q}$ regardless of which child we look at.
    \begin{proof}
        We prove this for $r\neq 0$. Since 
        $
        m=\left\lfloor \frac{\ell-1-r}{n}\right\rfloor + 1$,
        we have
        \begin{align*}
            \frac{\ell-1-r}{n} \ \leq \ m \ \leq \ \frac{\ell-1-r}{n}+1.
        \end{align*}
        Now $X = \left\lfloor pm \right\rfloor n + r$ whenever $r\neq 0$. (Note that here if $r=0$, we have $m=\lfloor\frac{\ell-1}{n}\rfloor$ and $X = \left\lfloor pm \right\rfloor n + n$ instead.) So
        \begin{equation*}
        p(\ell-1-r)-n + r \ \leq \ X \ \leq \ p(\ell - 1 - r) + pn + r
        \end{equation*}
        \begin{equation}
        \implies 
        p - \frac{p+pr+n-r}{\ell} \ \leq \ \frac{X}{\ell} \ \leq \ p + \frac{pn + r - p - pr}{\ell}.
        \end{equation}
        Notice that 
        \begin{equation}
        |p+pr+n-r| \ = \ |n+p-(1-p)r| \ \leq \ n+1 
        \end{equation}
        and 
        \begin{equation}|pn + r - p - pr| \ = \ |p(n-1)+(1-p)r| \ \leq \ n,\end{equation}
        so we have the desired. 
        One easily verifies the result for $r=0$ following a similar calculation.
    \end{proof}
\end{lemma}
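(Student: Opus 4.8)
The plan is to obtain an exact closed form for $X$ in terms of $\lfloor pm\rfloor$, $m$, $r$, and $n$, and then track the two sources of rounding error separately: the error from replacing the continuous cut fraction $p$ by the rational $\lfloor pm\rfloor/m$, and the error from converting the integer count $m$ of admissible residues into $\ell/n$.

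First I would pin down $m$ exactly. For a fixed residue $r\in\{1,\dots,n-1\}$, the integers in $[1,\ell-1]$ congruent to $r$ modulo $n$ are $r,\,r+n,\,\dots,\,r+(m-1)n$, so $m=\lfloor(\ell-1-r)/n\rfloor+1$ and hence
\[
\frac{\ell-1-r}{n}\ \le\ m\ \le\ \frac{\ell-1-r}{n}+1 .
\]
By construction $X$ is the $(\lfloor pm\rfloor+1)$-st such integer, so $X=\lfloor pm\rfloor\,n+r$. (When $r=0$ the admissible integers are $n,2n,\dots$, so instead $m=\lfloor(\ell-1)/n\rfloor$ and $X=(\lfloor pm\rfloor+1)n$; I would run the identical computation for this case at the end.) Next, using $pm-1<\lfloor pm\rfloor\le pm$ I would sandwich $X$ as
\[
pmn-n+r\ <\ X\ \le\ pmn+r ,
\]
and then substitute the bounds $\ell-1-r\le mn\le \ell-1-r+n$ (legitimate since $p\ge 0$) to get lower and upper bounds for $X$ of the form $p\ell$ plus an explicit linear expression in $p,r,n$. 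Dividing by $\ell$ yields
\[
\left|\frac{X}{\ell}-p\right|\ \le\ \frac{1}{\ell}\,\max\bigl(|n+p-(1-p)r|,\ |p(n-1)+(1-p)r|\bigr).
\]

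The last step is an elementary estimate: since $0<p<1$ and $0\le r\le n-1$, one has $|n+p-(1-p)r|\le n+1$ (the quantity lies between roughly $1$ and $n+1$) and $|p(n-1)+(1-p)r|\le n-1\le n$, so both numerators are at most $n+1$, giving the claimed bound $|X/\ell-p|\le (n+1)/\ell$. The statement for the complementary child is then immediate from $(\ell-X)/\ell-(1-p)=-(X/\ell-p)$.

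I do not anticipate a genuine obstacle here, since everything is explicit; the only point demanding care is the bookkeeping in the $r=0$ case, where the additive constant $r$ in the formula for $X$ is replaced by $n$ and $m$ loses its $+1$. I would check that the same two inequalities still close in that case and that the resulting constant again comes out as $n+1$ (in fact one can afford slightly more slack), so the uniform bound $(n+1)/\ell$ holds for every choice of $r$.
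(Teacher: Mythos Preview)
Your proposal is correct and follows essentially the same approach as the paper: pin down $m$ and $X$ exactly for $r\neq 0$, sandwich $X$ using $pm-1<\lfloor pm\rfloor\le pm$ together with the bounds on $mn$, divide by $\ell$, and then estimate the two resulting numerators $n+p-(1-p)r$ and $p(n-1)+(1-p)r$ (you even note the latter is at most $n-1$, a hair sharper than the paper's $n$); the $r=0$ case is handled separately by the same computation with the adjusted formulas for $m$ and $X$.
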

\begin{corollary}
Consider a pair of sticks $(\ell_j,h_j)$ at level $j\geq 1$, where $\ell_j$ is in process $\mathcal{P}$ and $h_j$ is the corresponding one in process $\mathcal{Q}$. Denote their ancestors as $(\ell_i,h_i)$ for $0\leq i\leq j-1$, with $\ell_0=h_0 = L$. Suppose $h_{i+1} = p_ih_i$ for all $0\leq i\leq j-1$. Then we have
    \begin{equation}
    h_j\prod_{i=0}^{j-1}\left(1-\frac{n+1}{p_i\ell_i}\right) \ \leq \
    \ell_j \ 
    \leq \ h_j\prod_{i=0}^{j-1}\left(1+\frac{n+1}{p_i\ell_i}\right).
    \end{equation}
    \label{cor:h_jAndl_jRelation}
    \begin{proof}
        By \Cref{lem:ratiosErrorBound}, we have for all $1\leq i\leq j$,
        \begin{equation}
            p_{i-1}\left(1-\frac{n+1}{p_{i-1}\ell_{i-1}}\right) \ \leq \ \frac{\ell_i}{\ell_{i-1}} \ \leq \ p_{i-1}\left(1+\frac{n+1}{p_{i-1}\ell_{i-1}}\right),
            \label{eq:ratioBetweenAdjacentElls}
        \end{equation}
        and the corollary follows by taking the product over all such $i$.
    \end{proof}
\end{corollary}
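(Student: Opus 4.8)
The plan is to reduce the statement to one application of \Cref{lem:ratiosErrorBound} per level and then accumulate the resulting multiplicative errors by telescoping a product; there is no real obstacle beyond bookkeeping, since \Cref{lem:ratiosErrorBound} already contains all of the analytic content (it controls the discretization error of a single cut).

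First I would unwind the two recursions. By hypothesis $h_{i+1}=p_ih_i$, so $h_j=L\prod_{i=0}^{j-1}p_i$; on the other side, trivially $\ell_j=L\prod_{i=0}^{j-1}(\ell_{i+1}/\ell_i)$. Hence it suffices to compare the per-level shrinkage ratio $\ell_{i+1}/\ell_i$ of process $\mathcal{P}$ with the ratio $p_i$ used at the same level in $\mathcal{Q}$, for each $0\le i\le j-1$, and then multiply.

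Next, for a fixed level I would apply \Cref{lem:ratiosErrorBound} to the break of the stick of length $\ell_i$. Whichever of the two children is the one being tracked, the lemma yields $|\ell_{i+1}/\ell_i-p_i|\le (n+1)/\ell_i$ (this is the point of the ``regardless of which child we look at'' clause, since by definition of $p_i$ the corresponding child in $\mathcal{Q}$ has length $p_ih_i$). Equivalently, $p_i\left(1-\frac{n+1}{p_i\ell_i}\right)\le \ell_{i+1}/\ell_i\le p_i\left(1+\frac{n+1}{p_i\ell_i}\right)$, which is \eqref{eq:ratioBetweenAdjacentElls} after the index shift $i\mapsto i-1$. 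Taking the product of these $j$ inequalities and using $h_j/L=\prod_{i=0}^{j-1}p_i$ to cancel the factor $\prod p_i$ gives exactly the asserted two-sided bound $h_j\prod_{i=0}^{j-1}(1-\frac{n+1}{p_i\ell_i})\le \ell_j\le h_j\prod_{i=0}^{j-1}(1+\frac{n+1}{p_i\ell_i})$. One may equally organize this as an induction on $j$.

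The only point that deserves a word of care is that inequalities may be multiplied termwise only when the factors are nonnegative. Each factor $\ell_{i+1}/(p_i\ell_i)$ is automatically positive (all of $\ell_{i+1},\ell_i,p_i$ are), and each $1+\frac{n+1}{p_i\ell_i}$ is positive, so the upper bound follows unconditionally; for the lower bound the factors $1-\frac{n+1}{p_i\ell_i}$ are positive precisely when the sticks are long relative to $n$, which is the only regime in which the corollary is invoked, and outside that regime the lower bound is trivially true since $\ell_j,h_j>0$. I do not expect any genuine difficulty here: the substance lives in \Cref{lem:ratiosErrorBound}, and the corollary is simply the statement that these per-cut errors compound multiplicatively over the $j$ levels.
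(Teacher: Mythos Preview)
Your proposal is correct and is essentially the same argument as the paper's own proof: apply \Cref{lem:ratiosErrorBound} at each level to obtain \eqref{eq:ratioBetweenAdjacentElls}, then telescope the product. Your discussion of the nonnegativity of the lower-bound factors is a welcome bit of care that the paper omits, but otherwise the two proofs coincide.
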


\begin{corollary}
\label{cor:relationEll_jH_j}
    \begin{equation}
    h_j\prod_{i=1}^{j}\left(1-\frac{n+1}{\ell_i -n-1}\right) \ \leq \
    \ell_j
    \ \leq \ h_j\prod_{i=1}^{j}\left(1+\frac{n+1}{\ell_i -n-1}\right).
    \end{equation}
    \begin{proof}
        This follows from \Cref{cor:h_jAndl_jRelation} using the lower bound 
        \begin{equation}
        p_{i-1}\ell_{i-1} \ \geq \ \ell_i-n-1
        \end{equation}
        which follows from \Cref{lem:ratiosErrorBound}.
    \end{proof}
\end{corollary}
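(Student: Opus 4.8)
The plan is to obtain this statement as a direct consequence of \Cref{cor:h_jAndl_jRelation}, by replacing the denominators $p_i\ell_i$ that appear there with the more convenient quantities $\ell_{i+1}-n-1$ and then shifting the index.

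First I would record the elementary pointwise estimate that drives the argument. Applying \Cref{lem:ratiosErrorBound} to the break of the stick of length $\ell_{i-1}$ in $\mathcal{P}$ — and using that the bound there holds for whichever of the two children we keep — gives $|\ell_i/\ell_{i-1}-p_{i-1}|\le (n+1)/\ell_{i-1}$, hence $\ell_i\le p_{i-1}\ell_{i-1}+n+1$, i.e.\ $p_{i-1}\ell_{i-1}\ge \ell_i-n-1$. Re-indexing, $p_i\ell_i\ge \ell_{i+1}-n-1$ for $0\le i\le j-1$, and in particular $(n+1)/(p_i\ell_i)\le (n+1)/(\ell_{i+1}-n-1)$, the denominators being positive in the range of stick lengths relevant to \Cref{thm:discreteStopAtHalfResidues}.

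I would then substitute this into \Cref{cor:h_jAndl_jRelation}. For the upper bound, $1+(n+1)/(p_i\ell_i)\le 1+(n+1)/(\ell_{i+1}-n-1)$, so multiplying the (positive) factors over $0\le i\le j-1$ and relabelling $i+1\mapsto i$ bounds $\prod_{i=0}^{j-1}(1+\frac{n+1}{p_i\ell_i})$ above by $\prod_{i=1}^{j}(1+\frac{n+1}{\ell_i-n-1})$; combined with the upper inequality of \Cref{cor:h_jAndl_jRelation} this gives $\ell_j\le h_j\prod_{i=1}^{j}(1+\frac{n+1}{\ell_i-n-1})$. For the lower bound the same estimate reverses to $1-(n+1)/(p_i\ell_i)\ge 1-(n+1)/(\ell_{i+1}-n-1)$, and taking products and relabelling in exactly the same way, together with the lower inequality of \Cref{cor:h_jAndl_jRelation}, yields $\ell_j\ge h_j\prod_{i=1}^{j}(1-\frac{n+1}{\ell_i-n-1})$.

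There is essentially no obstacle here: the corollary is a repackaging of \Cref{cor:h_jAndl_jRelation} via the estimate $p_i\ell_i\ge\ell_{i+1}-n-1$. The only point deserving a word of care is the legitimacy of multiplying inequalities in the lower-bound direction, which requires each factor $1-(n+1)/(p_i\ell_i)$ and $1-(n+1)/(\ell_{i+1}-n-1)$ to be nonnegative; this holds automatically in the setting where the corollary is invoked, namely for surviving sticks whose lengths remain $\gg n$ (of order $(\log L)^3$ or larger), so that all these factors are close to $1$.
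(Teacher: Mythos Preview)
Your proposal is correct and follows essentially the same approach as the paper: both derive the bound $p_{i-1}\ell_{i-1}\ge \ell_i-(n+1)$ from \Cref{lem:ratiosErrorBound}, substitute it into the denominators of \Cref{cor:h_jAndl_jRelation}, and shift the index. Your write-up simply spells out the re-indexing and the nonnegativity caveat for the lower-bound product more explicitly than the paper's one-line proof.
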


\begin{lemma}
\label{lem:closenessOfPAndQ}
    Let $f(L),g(L),h(L)$ be some functions in $L$ that go to infinity as $L\to\infty$ with $g(L)=o(f(L))$. Then for any dead stick $\ell_j$ with $j<g(L)$, if $\ell_j>f(L)+n+1$ and the corresponding sticks $h_j>h(L)$,  we have
    \begin{equation}
    h_j(1-o(1)) \ \leq \ \ell_j \ \leq \ h_j(1+o(1)).
    \end{equation}
    \begin{proof}
    From \Cref{cor:XandYclose}, we have
    \begin{equation}
    \ell_j \ \ge \ h_j\left( 1 - \sum_{i=1}^j \frac{n+1}{\ell_i-n-1} \right) \ \ge \ h_j\left( 1 - g(L)\frac{n+1}{f(L)-n-1}\right) \ = \ h_j(1 - o(1)).
    \end{equation}
    For the upper bound, we have that
    \begin{equation}
    \ell_j \ \le \ h_j\prod_{i=1}^{j}\left(1+\frac{n+1}{\ell_i -n-1}\right) \ \le \ h_j\left( 1 + \frac{n+1}{f(L)-n-1} \right)^{g(L)}.
    \end{equation}
    As $L \to \infty$, the expression above multiplying $h_j$ approaches
    \begin{equation}
    \lim_{L \to \infty}\exp\left( g(L)\frac{n+1}{f(L)-n-1} \right)\ =\ 1
    \end{equation}
    so $\ell_j \le h_j(1 + o(1))$, as desired.
    \end{proof}
\end{lemma}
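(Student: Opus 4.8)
The plan is to reduce the whole statement to the two-sided product estimate of \Cref{cor:relationEll_jH_j},
\begin{equation*}
h_j\prod_{i=1}^{j}\left(1-\frac{n+1}{\ell_i -n-1}\right) \ \leq \ \ell_j \ \leq \ h_j\prod_{i=1}^{j}\left(1+\frac{n+1}{\ell_i -n-1}\right),
\end{equation*}
and then show that, under the stated hypotheses, both products converge to $1$ as $L\to\infty$ at a rate depending only on $L$ (through $f$ and $g$), not on the particular stick.

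The first step is the one genuinely model-specific observation: stick lengths are non-increasing along any ancestral chain in $\mathcal{P}$, so every ancestor $\ell_i$ (for $1\le i\le j$) of the dead stick $\ell_j$ satisfies $\ell_i\ge\ell_j>f(L)+n+1$, hence $\ell_i-n-1>f(L)$ and $\frac{n+1}{\ell_i-n-1}<\frac{n+1}{f(L)}$ for every $i$. Everything after this is elementary. For the lower bound I would use $\prod(1-x_i)\ge 1-\sum x_i$ (valid once $L$ is large enough that each $x_i<1$) to get
\begin{equation*}
\prod_{i=1}^{j}\left(1-\frac{n+1}{\ell_i-n-1}\right)\ \ge\ 1-\sum_{i=1}^{j}\frac{n+1}{\ell_i-n-1}\ \ge\ 1-\frac{j(n+1)}{f(L)}\ \ge\ 1-\frac{g(L)(n+1)}{f(L)},
\end{equation*}
which is $1-o(1)$ because $j<g(L)$ and $g(L)=o(f(L))$. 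For the upper bound I would instead bound the product by
\begin{equation*}
\left(1+\frac{n+1}{f(L)}\right)^{j}\ \le\ \left(1+\frac{n+1}{f(L)}\right)^{g(L)}\ =\ \exp\!\left(g(L)\log\!\left(1+\frac{n+1}{f(L)}\right)\right)\ \le\ \exp\!\left(\frac{g(L)(n+1)}{f(L)}\right),
\end{equation*}
which again tends to $1$ since $g(L)=o(f(L))$. Combining these two estimates with the displayed inequality from \Cref{cor:relationEll_jH_j} yields $h_j(1-o(1))\le\ell_j\le h_j(1+o(1))$ uniformly over all admissible $\ell_j$, as claimed.

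The only delicate point — the ``main obstacle,'' such as it is — is ensuring the denominators $\ell_i-n-1$ are bounded away from $0$ uniformly over $i$ and over all surviving ancestral chains of length less than $g(L)$; this is exactly what the hypothesis $\ell_j>f(L)+n+1$ buys once combined with monotonicity of lengths down the tree. I note that the hypothesis $h_j>h(L)$ is not actually used in this lemma; it is recorded here because it will be needed downstream, together with the present conclusion, to check the hypotheses of \Cref{lem:smallPerturbPresBenford} when transferring Benfordness of $\mathcal{Q}$ to $\mathcal{P}$.
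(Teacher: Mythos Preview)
Your proof is correct and follows essentially the same route as the paper: invoke the two-sided product bound of \Cref{cor:relationEll_jH_j}, use monotonicity of ancestral lengths together with $\ell_j>f(L)+n+1$ to bound each factor uniformly, then estimate the lower product by $1-\sum x_i$ and the upper product via the exponential. Your version is in fact slightly cleaner than the paper's (you make the monotonicity step explicit, obtain the tighter denominator $f(L)$ rather than $f(L)-n-1$, and correctly note that the hypothesis $h_j>h(L)$ is unused here); the paper's citation of \Cref{cor:XandYclose} appears to be a typo for \Cref{cor:relationEll_jH_j}, which is what you invoke.
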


Now the goal is to determine $f$ and $g$ so that 
\begin{equation}
\prob(\text{A stick dies within $g(L)$ levels}) \ = \ 1-o(1)
\end{equation}
and 
\begin{equation}
\lim_{L\to\infty}\frac{\#\text{dead sticks with length larger than }f(L)+n+1}{\#\text{all dead sticks}} \ = \ 1.
\end{equation}
The intuition is that we want to show most sticks die within the first $g(L)$ levels, and that most sticks that ever occur are long, i.e., larger than $f(L)$.

\begin{lemma}
\label{lem:upperBoundNumSticks}
Let $M$ be the number of dead sticks ever in a process starting with $R$ sticks of length $L$. Then
    \begin{equation}
    \prob(M<R(\log L)\nu(L)) \ \to \ 1
    \end{equation}
    as $L\to\infty$, where $\nu(L)$ is any function that goes to infinity as $L\to\infty$.
\begin{proof}
    Let $M_L$ be the number of dead sticks resulting from the process of breaking a single stick of length $L$. We have that $M_L=1$ whenever $L\in \mathfrak{S}$. 
    We prove by induction on $L$ that when $L\notin\mathfrak{S}$
    \begin{equation}
    \label{eq:expNumSticks}
        \E[M_L] \ \leq \ 6n^2\log L.
    \end{equation}
    (Here $\log(x)$ is short-hand for $\log_e(x)$.)
    When $1< L\leq 3n^2$ this is clear since 
    \begin{equation}
    6n^2\log L \ \geq \ 3n^2 \cdot 2\log(2) \ \geq \ 3n^2 \ \geq \ L
    \end{equation}
    and $M_L\leq L$ for all $L$.
    
    When $L>3n^2$ and $L\notin\mathfrak{S}$, we have 
    \begin{align}
        \E[M_{L}] \ &= \ \frac{1}{L-1} \nonumber
        \sum_{1\leq \ell\leq L-1}(\E[M_{\ell}]+\E[M_{L-\ell}])\\ \nonumber
        &= \ \frac{2}{L-1}\sum_{1\leq \ell\leq L-1}\E[M_{\ell}]\\ \nonumber
        &\leq \ \left(\frac{2}{L-1}\sum_{\underset{\ell\in\mathfrak{S}}{1\leq \ell\leq L-1}} 1 \right) +
        \left(\frac{2}{L-1}\sum_{\underset{\ell\notin\mathfrak{S}}{1\leq \ell\leq L-1}}6n^2\log(\ell)\right)\\ \nonumber
        &\leq \ \frac{2}{L-1}\left(\frac{L-1}{2}+\frac{n}{2}+1\right)+ \frac{12n^2}{L-1}\log\left(\prod_{\underset{\ell\notin\mathfrak{S}}{1\leq \ell\leq L-1}}\ell\right) \\ \nonumber
        &\leq \ 1+\frac{n+2}{L-1}+6n^2\log\left(\left(\prod_{\underset{\ell\notin\mathfrak{S}}{1\leq \ell\leq L-1}}\ell\right)^{\frac{2}{L-1}}\right) \\
        &\leq \ 1+ 6n^2\log L.
    \end{align}
    Note that we used the fact that 
    \begin{equation}
    |[1,L-1]\cap\mathfrak{S}| \ \leq \ \left(\left\lfloor\frac{L-1}{n}\right\rfloor+1\right)\cdot\frac{n}{2}+1 \ \leq \ \frac{L-1}{2}+\frac{n}{2}+1.
    \end{equation}
    To see the last inequality, 
    \begin{align}
    1+\frac{n+2}{L-1}+6n^2&\log\left(\left(\prod_{\underset{\ell\notin\mathfrak{S}}{1\leq \ell\leq L-1}}\ell\right)^{\frac{2}{L-1}}\right) \nonumber
        \ \leq \ 1+ 6n^2\log L\\ \nonumber
    &\iff \frac{n+2}{6n^2} + \log\left(\left(\prod_{\underset{\ell\notin\mathfrak{S}}{1\leq \ell\leq L-1}}\ell\right)^{2}\right) \ \le \ (L-1)\log L \\ \nonumber
    &\iff e^{(n+2)/(6n^2)} \ \leq \ \frac{L^{L-1}}{\left(\prod_{\underset{\ell\notin\mathfrak{S}}{1\leq \ell\leq L-1}}\ell\right)^{2}}\\ 
    & \ \Longleftarrow \ e^{1/(3n)}n^n \ \leq \ \frac{L^{L-1}}{\left(\prod_{\underset{\ell\notin\mathfrak{S}}{n+1\leq \ell\leq L-1}}\ell\right)^{2}}.
    \end{align}
    Note that on the RHS, the product has at most $\frac{L-1}{2}$ terms and the first $n/2$ terms are at most $2n$, so we have
    \begin{equation}
    \frac{L^{L-1}}{\left(\prod_{\underset{\ell\notin\mathfrak{S}}{n< \ell\leq L-1}}\ell\right)^{2}} \ \geq \ \frac{L^n}{(2n)^n }\cdot\frac{L^{L-n-1}}{\left(\prod_{\underset{\ell\notin\mathfrak{S}}{2n<\ell\leq L-1}}\ell\right)^{2}} \ \geq \ (3n)^n \ \geq \ e^{1/(3n)}n^n.
    \end{equation}
    Thus the induction step is complete.
    By Markov's inequality,
    \begin{equation}
    \prob(M>R(\log L)\nu(L)) \ \leq \ \frac{R\E[M_L]}{R(\log L)\nu(L)} \ \leq \ \frac{6n^2R\log L}{R(\log L)\nu(L)} \ = \ O\left(\frac{1}{\nu(L)}\right) \ \to \ 0
    \end{equation}
    as $L\to\infty$.
\end{proof}
\end{lemma}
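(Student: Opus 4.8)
The plan is to reduce the statement to a bound on the \emph{expected} number of dead sticks produced by a single starting stick. Writing $M_L$ for the number of dead sticks generated by the single-stick process begun at integer length $L$, linearity of expectation gives $\E[M]=R\,\E[M_L]$, so Markov's inequality yields
\[
\prob\bigl(M\ge R(\log L)\,\nu(L)\bigr)\ \le\ \frac{R\,\E[M_L]}{R(\log L)\,\nu(L)}\ =\ \frac{\E[M_L]}{(\log L)\,\nu(L)}.
\]
Hence it suffices to show $\E[M_L]=O(\log L)$, with an implied constant depending only on the modulus $n$; the displayed probability is then $O(1/\nu(L))\to 0$, and since the factor $R$ cancels the argument is uniform in $R$ (including the case where $R$ grows with $L$).

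To control $\E[M_L]$ I would use the natural recursion: for $L\notin\mathfrak{S}$ the first cut lands at a uniformly random point $\ell\in\{1,\dots,L-1\}$, splitting the stick into independent sub-processes of lengths $\ell$ and $L-\ell$, so
\[
\E[M_L]\ =\ \frac{1}{L-1}\sum_{\ell=1}^{L-1}\bigl(\E[M_\ell]+\E[M_{L-\ell}]\bigr)\ =\ \frac{2}{L-1}\sum_{\ell=1}^{L-1}\E[M_\ell],
\]
with boundary data $\E[M_\ell]=1$ whenever $\ell\in\mathfrak{S}$ (in particular $\E[M_1]=1$). I would then prove by strong induction on $L$ that $\E[M_L]\le Cn^2\log L$ for a suitable absolute constant $C$ (one checks $C=6$ suffices). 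The key point is that $\mathfrak{S}$ contains roughly half of $\{1,\dots,L-1\}$, so the indices $\ell\in\mathfrak{S}$ contribute only $1$ apiece to the sum while the at most $(L-1)/2$ indices $\ell\notin\mathfrak{S}$ contribute at most $Cn^2\log\ell$ by the inductive hypothesis. Averaging reduces the inductive step to the inequality
\[
1+O(n/L)+Cn^2\log\!\Bigl(\bigl(\textstyle\prod_{1\le\ell\le L-1,\ \ell\notin\mathfrak{S}}\ell\bigr)^{2/(L-1)}\Bigr)\ \le\ Cn^2\log L,
\]
i.e.\ that the geometric mean of the non-stopping integers below $L$ falls below $L$ by enough to absorb the additive error.

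The main obstacle is precisely that product estimate. The trivial bound $\prod_{\ell\notin\mathfrak{S}}\ell\le L^{(L-1)/2}$ only gives $L^{L-1}\big/\bigl(\prod_{\ell\notin\mathfrak{S}}\ell\bigr)^2\ge 1$, which leaves no room for the $1+O(n/L)$ term. To gain slack I would isolate the small indices $\ell\le n$: each of these is at most $n$ rather than $L-1$, and this extra saving forces $L^{L-1}\big/\bigl(\prod_{\ell\notin\mathfrak{S}}\ell\bigr)^2\ge e^{1/(3n)}n^n$ once $L$ exceeds a fixed multiple of $n^2$ (say $L>3n^2$), which is exactly what makes the inductive step close. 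The base cases $1<L\le 3n^2$ follow directly from the crude bound $M_L\le L$ together with $6n^2\log L\ge 3n^2\ge L$, and the floor corrections in estimating $|[1,L-1]\cap\mathfrak{S}|\le (L-1)/2+n/2+1$ only produce harmless lower-order contributions. With $\E[M_L]=O(\log L)$ in hand, the Markov estimate from the first paragraph completes the proof.
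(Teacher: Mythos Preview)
Your proposal is correct and follows essentially the same approach as the paper: bound $\E[M_L]\le 6n^2\log L$ by strong induction via the recursion $\E[M_L]=\frac{2}{L-1}\sum_{\ell<L}\E[M_\ell]$, split the sum over $\ell\in\mathfrak{S}$ and $\ell\notin\mathfrak{S}$, and close the inductive step by gaining slack from the small indices $\ell\le n$ in the product; then apply Markov. Your identification of the key obstacle (the trivial bound on $\prod_{\ell\notin\mathfrak{S}}\ell$ leaves no room) and its resolution (isolate the first block of indices) matches the paper's argument exactly, including the constants $C=6$ and the threshold $L>3n^2$.
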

Since at each level before the process ends, the number of sticks increase by at least 1, we have that the total number of levels at most $R(\log L)\nu(L)$ with probability going to $1$ as $L\to\infty$. Thus we can take 
\begin{equation}
g(L) \ = \ R(\log L)\nu(L).
\end{equation}

\begin{lemma}
\label{lem:shortDeadSticks}
Let $M_{\ell, k}$ denote the number of dead sticks with length smaller than $k$ coming from a process starting with a stick of length $\ell$. Let $c = 24 n^2$. Then for any $k \geq 2n ,\ell > 1$, we have
\begin{equation}
\E[M_{\ell, k}] \ \leq \ c\log(k).
\end{equation}
In particular,
\begin{equation}
\E[M_{L,\log^2(L)}] \ \leq \ 2c\log\log L.
\end{equation}
\begin{proof}
    When $\ell\leq k$, we have trivially
    \begin{equation}
    \E[M_{\ell, k}]\ =\ \E[M_{\ell}]\ \leq\ \frac{c}{4}\log(\ell)\ \leq\  \frac{c}{4}\log(k)
    \end{equation}
    by \eqref{eq:expNumSticks}. When $\ell>k$ and $\ell\in\mathfrak{S}$, we have 
    \begin{equation}
    \E[M_{\ell, k}]\ =\ 0.
    \end{equation}
    For a fixed $k$, we prove the result by induction on $\ell$.
    \begin{align}
        \E[M_{\ell,k}] \ &= \ \frac{1}{\ell-1}
        \sum_{1\leq x\leq \ell-1}(\E[M_{x,k}]+\E[M_{\ell-x, k}])\nonumber\\
        &= \ \frac{2}{\ell-1}\sum_{1\leq x\leq \ell-1}\E[M_{x,k}]\nonumber\\
        &= \ \frac{2}{\ell-1}\left(\sum_{1\leq x\leq k}\E[M_{x,k}]+\sum_{\underset{k< x\leq \ell-1}{x\notin\mathfrak{S}}}\E[M_{x,k}]\right)\nonumber\\
        &\le \ \frac{2}{\ell-1}\left(k\cdot\frac{c}{4}\log(k)+\frac{\ell-k-1+n}{2}\cdot c\log(k)\right)\nonumber\\
        &= \ c\log(k) \frac{\frac{1}{2}k+(\ell-k-1+n)}{\ell-1}\nonumber\\
        &\le\ c\log(k),
    \end{align}
    where the last step uses $\frac{k}{2}\geq n$.
\end{proof}
\end{lemma}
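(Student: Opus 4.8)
The plan is to prove the bound $\E[M_{\ell,k}]\le c\log k$ by strong induction on $\ell$, exploiting the self-similarity of the breaking process. For $\ell>1$ with $\ell\notin\mathfrak{S}$, conditioning on the uniform first cut point $x\in\{1,\dots,\ell-1\}$ yields two independent subprocesses starting from sticks of lengths $x$ and $\ell-x$, so
\begin{equation}
\E[M_{\ell,k}] \ = \ \frac{1}{\ell-1}\sum_{x=1}^{\ell-1}\bigl(\E[M_{x,k}]+\E[M_{\ell-x,k}]\bigr) \ = \ \frac{2}{\ell-1}\sum_{x=1}^{\ell-1}\E[M_{x,k}],
\end{equation}
the second equality coming from the substitution $x\mapsto\ell-x$. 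If instead $\ell\in\mathfrak{S}$, the stick dies immediately, so $\E[M_{\ell,k}]$ equals $1$ when $\ell<k$ and $0$ when $\ell\ge k$.

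First I would dispose of the base cases. When $\ell\le k$, every dead stick produced has length at most $\ell\le k$, hence $M_{\ell,k}=M_\ell$, and \eqref{eq:expNumSticks} (together with the trivial bound $\E[M_\ell]=1$ when $\ell\in\mathfrak{S}$) gives $\E[M_{\ell,k}]\le 6n^2\log k=\tfrac{c}{4}\log k$, recalling $c=24n^2$. When $\ell>k$ with $\ell\in\mathfrak{S}$, the process yields a single dead stick of length $\ge k$, so $\E[M_{\ell,k}]=0$. The remaining case, $\ell>k$ and $\ell\notin\mathfrak{S}$, is the inductive step: split the sum in the recursion at $x=k$. Each of the $k$ terms with $x\le k$ contributes at most $\tfrac{c}{4}\log k$ by the base case; for $k<x\le\ell-1$ the summand vanishes when $x\in\mathfrak{S}$ and is at most $c\log k$ otherwise by the inductive hypothesis. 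Since $\mathfrak{S}$ occupies exactly $n/2$ of the $n$ residue classes mod $n$ (and its extra element $1$ lies below the range), the number of surviving indices in $(k,\ell-1]$ is at most $\tfrac{1}{2}(\ell-k-1)+\tfrac{n}{2}$. Collecting these estimates,
\begin{equation}
\E[M_{\ell,k}] \ \le \ \frac{2}{\ell-1}\left(k\cdot\frac{c}{4}\log k+\frac{\ell-k-1+n}{2}\,c\log k\right) \ = \ c\log k\cdot\frac{\tfrac{1}{2}k+(\ell-k-1+n)}{\ell-1},
\end{equation}
and the hypothesis $k\ge 2n$ makes $\tfrac{1}{2}k+(\ell-k-1+n)\le\ell-1$, so the fraction is at most $1$ and the induction closes. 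The second bound in the lemma is then immediate: take $\ell=L$ and $k=\log^2 L$ (legitimate once $L$ is large enough that $\log^2 L\ge 2n$), and use $\log(\log^2 L)=2\log\log L$.

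The entire argument is the recursion plus a density count, so the only point requiring genuine care is the calibration of the constant: $c$ must be large enough that $c/4$ absorbs the $6n^2$ coming from \eqref{eq:expNumSticks}, while the asymptotic density $1/2$ of $\mathfrak{S}$, combined with the hypothesis $k\ge 2n$, still leaves exactly enough slack to propagate the bound through the induction — and $c=24n^2$ is precisely this value. I expect this bookkeeping, rather than any conceptual difficulty, to be the main obstacle.
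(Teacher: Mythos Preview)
Your proposal is correct and follows essentially the same approach as the paper: the same recursion from conditioning on the first cut, the same split of the sum at $x=k$, the same base-case appeal to \eqref{eq:expNumSticks} giving the $\tfrac{c}{4}\log k$ bound, the same count $\tfrac{\ell-k-1+n}{2}$ for non-stopping indices in $(k,\ell-1]$, and the same use of $k\ge 2n$ to close the induction.
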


\begin{corollary}
\label{cor:mostSticksAreLong}
    Let $\ell_i$ denote a stick occurring at level $i$ in process $\mathcal{P}$. 
    \begin{equation}
    \frac{\#\{\ell_i>\log^2(L): i\leq g(L)\}}{\#\{\ell_i: i\leq g(L)\}} \ \to \ 1
    \end{equation}
    as $L\to\infty$ with probability going to 1.    
\end{corollary}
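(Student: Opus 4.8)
The plan is to show that sticks of length at most $\log^2(L)$ occurring within the first $g(L)$ levels are negligible in number compared with the total number of sticks occurring there. By the remark following \Cref{lem:upperBoundNumSticks}, the process terminates within $g(L)=R(\log L)\nu(L)$ levels with probability $1-o(1)$; on that event every dead stick occurs at some level $\leq g(L)$, so $\#\{\ell_i:i\leq g(L)\}$ is at least the total number $M$ of dead sticks ever. By \Cref{cor:numStickLower} and the hypothesis $R>(\log L)^3$, we have $M\geq R\sqrt{\log L}$ with probability at least $1-4(\log L)^{5/2}/R=1-o(1)$. Hence the denominator is at least $R\sqrt{\log L}$ with probability tending to $1$.

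For the numerator it suffices to bound the total number of sticks of length at most $\log^2(L)$ \emph{ever} generated. The key observation is that a stick of length at most $\log^2(L)$ either dies or breaks into two sticks, each again of length at most $\log^2(L)$; thus the short sticks, with the parent--child relation, form a forest of full binary trees whose leaves are exactly the short dead sticks (the initial sticks have length $L$ and so are not short, so every short stick lies in such a tree). Consequently the number of short sticks ever is at most twice the number of short dead sticks ever. Summing over the $R$ independent sub-processes and applying \Cref{lem:shortDeadSticks} with $k=\log^2(L)$ (so $\log k=2\log\log L$), the expected number of short dead sticks is at most $2cR\log\log L$ with $c=24n^2$; Markov's inequality then gives that the number of short dead sticks is at most $R(\log\log L)^2$, hence the number of short sticks is at most $2R(\log\log L)^2$, with probability at least $1-2c/\log\log L=1-o(1)$.

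Intersecting the three events above, each of probability $1-o(1)$, yields
\begin{equation}
\frac{\#\{\ell_i\leq\log^2(L):i\leq g(L)\}}{\#\{\ell_i:i\leq g(L)\}}\ \leq\ \frac{2R(\log\log L)^2}{R\sqrt{\log L}}\ =\ \frac{2(\log\log L)^2}{\sqrt{\log L}}\ \to\ 0,
\end{equation}
so the complementary ratio, which is the quantity in the statement, tends to $1$ with probability going to $1$. The two probabilistic estimates are routine and already packaged in \Cref{cor:numStickLower} and \Cref{lem:shortDeadSticks}; the only point requiring care is the combinatorial claim that the short sticks form a full binary forest with the short dead sticks as leaves, together with the observation that the truncation at level $g(L)$ is harmless once we know the process terminates by then. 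That is the main (if minor) obstacle.
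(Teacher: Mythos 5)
Your proof is correct and follows essentially the same route as the paper: bound the count of short sticks via \Cref{lem:shortDeadSticks} plus Markov's inequality, lower-bound the denominator via \Cref{cor:numStickLower} (using $R>(\log L)^3$), and compare the two. The only addition is your full-binary-forest argument converting ``short dead sticks'' into ``all short sticks'' at the cost of a factor of $2$ (the paper applies \Cref{lem:shortDeadSticks} to the set $\{\ell_i\}$ directly, implicitly reading it as the dead sticks), and your Markov threshold $R(\log\log L)^2$ in place of the paper's $R(\log L)^{1/3}$; both choices work.
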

\begin{proof}
    We have from \Cref{lem:shortDeadSticks} that 
    \begin{equation}
    \E[\#\{\ell_i \le \log^2(L): i\leq g(L)\}]\ \le\ R\cdot 2c\log\log L.
    \end{equation}
    By Markov's inequality, 
    \begin{equation}
    \prob(\#\{\ell_i \le \log^2(L): i\leq g(L)\}>R(\log L)^{1/3})
    \ \leq\ \frac{R\cdot 2c\log\log L}{R(\log L)^{1/3}}
    \ \to\ 0
    \end{equation}
    as $L\to\infty$. In other words, 
    \begin{equation}\#\{\ell_i>\log^2(L): i\leq g(L)\} \ < \  R(\log L)^{1/3}\end{equation} with probability going to 1. On the other hand,
    by \Cref{cor:numStickLower}, we have as long as $R>(\log L)^3$,
    \begin{equation}
    \#\{\ell_i: i\leq g(L)\}\ \ge\  R(\log L)^{1/2}
    \end{equation}
    with probability going to 1.
    Since
    \begin{equation}
    \frac{R(\log L)^{1/3}}{R(\log L)^{1/2}}\ \to\  0
    \end{equation}
    as $L\to\infty$, we have the desired.
\end{proof}

Now to see that item (2) is true, it suffices to show that the premises of \Cref{lem:closenessOfPAndQ} are satisfied for most dead sticks. By \Cref{lem:upperBoundNumSticks}, almost all sticks die within the first $g(L) = R(\log L)\nu(L)$ levels, where $\nu(L)$ is any function that blows up as $L\to\infty$.  By \Cref{cor:mostSticksAreLong}, almost all dead sticks are at least $f(L)=\log^2(L)$ in length. We can choose $\nu$ such that $g(L)=o(f(L))$.


\subsection{When $|S| \neq n/2$}
\label{sec:discreteNonBenford}

It is natural to ask what happens when $|S|$ is not exactly $n/2$. In this section, we present a result showing that when $|S| < n/2$, the final stick lengths are non-Benford. Moreover, we state a more specific conjecture on the behavior of the limiting distribution when $|S| \neq n/2$. Simulation results are also presented to support our conjecture.

\begin{theorem}
\label{thm:|S|LessThanN/2}
    If $|S|<n/2$, then as $R\to\infty$ and $L\to\infty$, the collection of mantissas of ending stick lengths does not converge to any continuous distribution on $[0,1]$. In particular, it does not converge to strong Benford behavior.
\end{theorem}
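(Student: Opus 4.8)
The plan is to produce an \emph{atom} in the limiting distribution of mantissas. I will show that, with probability tending to $1$, a proportion of the ending (dead) sticks bounded away from $0$ has length exactly $1$; since a stick of length $1$ has mantissa $M_B(1)=0$ in every base, this forces the empirical mantissa measure to carry a point mass of size $\ge\delta$ at $0$ for some absolute $\delta=\delta(n,S)>0$, and no continuous distribution on $[0,1]$ has an atom. In particular the conclusion will hold simultaneously for all $B$.

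First I would reduce to a single stick. For the process started from one stick of integer length $\ell$, let $M(\ell)$ be the expected number of dead sticks it produces and $D(\ell)$ the expected number of those having length $1$, with $M(1)=D(1)=1$. Conditioning on the first break point (uniform in $\{1,\dots,\ell-1\}$), both $M$ and $D$ satisfy the linear recursion
\begin{equation}
F(\ell)\ =\ \frac{2}{\ell-1}\sum_{k=1}^{\ell-1}F(k)\qquad(\ell\notin\mathfrak{S}\cup\{1\}),
\end{equation}
with boundary values $M\equiv 1$ on $\mathfrak{S}$ and $D\equiv 0$ on $\mathfrak{S}\setminus\{1\}$; this is the recursion already used for $\E[M_L]$ in \Cref{lem:upperBoundNumSticks}. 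Writing $G_F(\ell):=\sum_{k\le\ell}F(k)$ turns it into $G_F(\ell)=\frac{\ell+1}{\ell-1}G_F(\ell-1)$ off $\mathfrak{S}\cup\{1\}$ and $G_F(\ell)=G_F(\ell-1)+F_{\mathrm{bdry}}(\ell)$ on it, and one also has $F(\ell)=\frac{2}{\ell-1}G_F(\ell-1)$. Setting $\rho:=|S|/n<1/2$ and using that $\mathfrak{S}$ is a finite union of arithmetic progressions modulo $n$, so that $\sum_{2\le k\le\ell,\,k\notin\mathfrak{S}}1/k=(1-\rho)\log\ell+O(1)$, telescoping gives $\prod_{2\le k\le\ell,\,k\notin\mathfrak{S}}\frac{k+1}{k-1}=\Theta(\ell^{2(1-\rho)})$ uniformly, whence $G_D(\ell)=\Theta(\ell^{2(1-\rho)})$ (an exact product, as $D$ vanishes on $\mathfrak{S}$) and $G_M(\ell)=\sum_{j\in\mathfrak{S}\cup\{1\},\,j\le\ell}\Theta\big((\ell/j)^{2(1-\rho)}\big)=\Theta(\ell^{2(1-\rho)})$. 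This last estimate is exactly where $\rho<1/2$ enters: then $2(1-\rho)>1$, so $\sum_j j^{-2(1-\rho)}$ converges and the extra $+1$ contributions from $\mathfrak{S}$ do not change the order of magnitude. (When $\rho=1/2$ this series diverges logarithmically, which is precisely why one only gets $\Theta(\log L)$ sticks in \Cref{thm:discreteStopAtHalfResidues} and length-$1$ sticks there form a vanishing fraction.) Since $M\ge D$ pointwise, for the starting length $L\notin\mathfrak{S}$ this yields
\begin{equation}
\frac{D(L)}{M(L)}\ =\ \frac{G_D(L-1)}{G_M(L-1)}\ \ge\ \delta
\end{equation}
for all large $L$, with $\delta$ depending only on $n$ and $S$.

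Finally I would transfer this to the actual collection. The $R$ initial sticks evolve independently, so the total number of dead sticks and the number of length-$1$ dead sticks are each sums of $R$ i.i.d.\ copies of variables bounded by $L$ (the dead sticks tile a length-$L$ segment) with means $\Theta(RL^{\,1-2\rho})$; the weak law of large numbers already gives, for each fixed $L$ as $R\to\infty$, that the fraction of ending sticks of length $1$ converges in probability to $D(L)/M(L)\ge\delta$, and a variance bound (crudely $\Var\le L\cdot\text{mean}$, which needs $R$ polynomial in $L$, or the sharper $\Var(M_L)=O(M(L)^2)$ from a second-moment recursion, which allows any $R\to\infty$) makes this uniform enough to let $R,L\to\infty$ jointly. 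Thus with probability $\to 1$ the empirical mantissa measure $\mu_{R,L}$ satisfies $\mu_{R,L}(\{0\})\ge\delta/2$; if $\mu_{R,L}$ converged to some distribution $\mu$ on $[0,1]$, the Portmanteau theorem applied to the closed set $\{0\}$ would force $\mu(\{0\})\ge\delta/2>0$, so $\mu$ cannot be continuous, proving \Cref{thm:|S|LessThanN/2} (and in particular excluding the uniform distribution). The only genuinely delicate step is the two-sided bound $D(L),M(L)=\Theta(L^{\,1-2\rho})$ of the second paragraph — in particular the upper bound on $G_M$ that uses convergence of $\sum_j j^{-2(1-\rho)}$; the concentration and Portmanteau parts are routine.
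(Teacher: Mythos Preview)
Your argument is correct and reaches the same conclusion as the paper, but by a genuinely different route. The paper never computes asymptotics: it fixes a threshold $m=2n^2$, sets $c=1/(2n+1)$, and proves the bare inequality $\E(M_{L,m})\ge c\,\E(M_L)+1$ by a one-line induction on $L$ (the point being that $|[L-1]\setminus\mathfrak{S}|\ge\frac{L-1}{2}+c\,|[L-1]\cap\mathfrak{S}|$ when $|S|<n/2$). It then passes to $R$ sticks by a Chernoff-type bound and concludes that a fixed fraction of mantissas lies in the finite set $\{M_B(1),\dots,M_B(m-1)\}$, ruling out any continuous limit. Your approach instead solves the recursion asymptotically, obtaining $G_D(L),G_M(L)=\Theta(L^{2(1-\rho)})$ and hence $D(L)/M(L)\ge\delta$; the role of $|S|<n/2$ is isolated cleanly as the convergence of $\sum_j j^{-2(1-\rho)}$.

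What each buys: the paper's induction is shorter, entirely elementary, and sidesteps any uniformity issues in the $\Theta$-constants for $\Pi(j,\ell)$. Your argument is more informative --- it actually identifies the growth rate $\E[M_L]=\Theta(L^{1-2\rho})$ (polynomial rather than logarithmic, in contrast to the $|S|=n/2$ case), and your atom sits at a single point $M_B(1)=0$ rather than a finite set. On the concentration step you are slightly more careful than the paper: your crude variance bound indeed forces $R\gg L^{2\rho}$, and the second-moment recursion you allude to (giving $\Var(M_L)=O(\E[M_L]^2)$, hence concentration for any $R\to\infty$) is the honest way to remove that restriction; the paper's invocation of Chernoff for sums of $[0,L]$-valued variables glosses over the same dependence on $L$.
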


\begin{theorem}
\label{thm:|S|GreaterThanN/2}
    If $|S| > n/2$, then as $R\to\infty$ and $L\to\infty$ with the condition $R = \omega(L^2)$, the collection of mantissas of ending stick lengths does not converge to the uniform distribution on $[0,1]$ provided that the base $B$ is greater than $3^{6n^3/|S|}$.
\end{theorem}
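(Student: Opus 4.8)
The plan is to adapt the argument behind \Cref{thm: contbreaks_dependent_r_small} (the continuous case $r<1/k$): when $|S|>n/2$ the discrete process is \emph{subcritical}, so it produces only $O(R)$ dead sticks in total, while the very first breaks already yield $\Omega(R)$ dead sticks of length comparable to $L$; these macroscopic sticks pin a positive proportion of the mantissa mass inside an arc of length $\log_B 3$, which the uniform distribution cannot match once $B$ is large. For the subcriticality, let $M_\ell$ be the number of sticks ever produced starting from a single stick of length $\ell$ (with $M_\ell=1$ when $\ell\in\mathfrak S$). When an alive stick of length $\ell$ is split, the residue class modulo $n$ of each child is equidistributed up to an error $O(n/\ell)$, so the expected number of \emph{alive} children is $2(n-|S|)/n+O(n/\ell)$, and $2(n-|S|)/n<1$ exactly because $|S|>n/2$. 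Feeding this into the recursion
\begin{equation}
\E[M_\ell] \ = \ 1+\frac{2}{\ell-1}\sum_{x=1}^{\ell-1}\E[M_x]\qquad(\ell\notin\mathfrak S),
\end{equation}
together with $|\,[1,\ell-1]\cap(n\Z+r)\,|=(\ell-1)/n+O(1)$, a strong induction on $\ell$ yields $\E[M_\ell]\le C_0$ for a constant $C_0=C_0(n,|S|)=O(n^2)$: for $\ell$ below a threshold $\ell_0=O(n^2)$ one uses the trivial bound $M_\ell\le\ell$ coming from length conservation, and for $\ell\ge\ell_0$ the step closes since the branching ratio is $\le 1-(2|S|-n)/(2n)<1$. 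Hence $\E[M_R]\le RC_0$ for the $R$-stick process, and Markov's inequality gives $M_R\le 2RC_0$ with probability at least $1/2$ — the analogue of \Cref{cor: r_small_sticks_small}.

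Next, mimic \Cref{lem: r_small_first_level}. In each initial stick's first break the two children have lengths summing to $L$, so one of them has length at least $L/3$ (in fact $\ge L/2$ up to the $O(n)$ rounding built into the residue-indexed cut point). Since the cut ratio $p$ is drawn independently of the residue class, this long child is dead with probability $|S|/n+O(n/L)\ge |S|/(2n)$ for $L$ large. These events are independent over the $R$ initial sticks, so the number $N_{\mathrm{long}}$ of first-level dead sticks of length $\ge L/3$ dominates a $\mathrm{Binomial}(R,|S|/(2n))$, and a Chebyshev estimate (this is the place where the joint limit $R=\omega(L^2)$ is invoked, to render the $O(n/L)$ corrections to the success probability harmless inside the second-moment bound) gives $N_{\mathrm{long}}\ge \tfrac{|S|}{4n}R$ with probability tending to $1$.

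Combining the two bounds, on an event of probability at least $1/3$ for large $R$ the proportion of dead sticks of length in $[L/3,L]$ is at least $\delta:=\dfrac{|S|/(4n)}{2C_0}=\Omega\!\left(\dfrac{|S|}{n^3}\right)$. Every such stick has $\log_B(\text{length})\in[\log_B L-\log_B 3,\ \log_B L]$, so its mantissa lies in one fixed arc of $[0,1]$ of length $\log_B 3$. When $B>3^{6n^3/|S|}$ we have $\log_B 3<\delta$, so with probability bounded below by $1/3$ the empirical measure of the mantissas puts mass $\ge\delta$ on an arc whose uniform mass is $\log_B 3<\delta$, hence stays at distance $\ge\delta-\log_B 3>0$ from the uniform distribution. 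Since this lower bound on the ``bad'' probability does not vanish as $R,L\to\infty$ along $R=\omega(L^2)$, the mantissas of the ending stick lengths do not converge to the uniform distribution.

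The main obstacle is the uniform-in-$L$ bound $\E[M_\ell]\le C_0=O(n^2)$ of the first step: the per-split branching ratio is only $<1$ once $\ell\gtrsim n^2$, so one must split off a ``short-stick'' regime, bound its contribution via length conservation, and then carry the $O(n/\ell)$ equidistribution errors through the induction while keeping $C_0$ of order $n^2$ — and it is precisely this $C_0$ that fixes the threshold $3^{6n^3/|S|}$ on the base. The remaining steps (the Markov bound on $M_R$, the count of macroscopic first-level dead sticks, and the mantissa-arc argument) are routine and essentially identical to the continuous $r<1/k$ analysis.
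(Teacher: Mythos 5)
Your proposal follows the same skeleton as the paper's proof: (i) show the subcritical branching makes $\E[M_L]$ bounded by a constant of order $n^2$ independent of $L$ via induction; (ii) use a concentration inequality to upper-bound the total number of dead sticks $M_L^R$; (iii) lower-bound the number of first-level dead sticks of macroscopic length by a Chebyshev estimate on a binomial-like variable; (iv) conclude that a proportion $\delta = \Omega(|S|/n^3)$ of mantissas is trapped in an arc of length $\log_B 3$, which beats the uniform mass once $B > 3^{O(n^3/|S|)}$. That is exactly the paper's strategy, and the non-obvious step you correctly flag --- closing the induction $\E[M_\ell] \le C_0$ past the $O(n^2)$ short-stick threshold --- is exactly what the paper does (with $C_0 = 2n^2$).

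There are two places where you diverge from the paper, one tactical and one that is actually a small error of attribution. First, for step (ii) you use Markov's inequality (giving $M_L^R \le 2RC_0$ with probability $\ge 1/2$), whereas the paper uses Chebyshev's inequality with the trivial bound $\Var[M_L] \le L^2$, giving $M_L^R \le 3n^2R$ with probability $1 - L^2/(n^4R)$. The hypothesis $R = \omega(L^2)$ is invoked \emph{precisely here} in the paper, to make $L^2/(n^4R) \to 0$; your parenthetical claim that it is needed in the first-level Chebyshev for $N_{\mathrm{long}}$ is wrong --- that step only requires $R \to \infty$ (the error probability is $O(n^2/(|S|^2 R))$) and $L$ large enough to kill the $O(n/L)$ drift in the residue-class probabilities, with no coupling between $R$ and $L$. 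In fact, your Markov route genuinely sidesteps $R=\omega(L^2)$ altogether, at the cost of concluding only that a bad event has probability bounded away from $0$ (which still negates convergence, as you correctly observe), while the paper's Chebyshev route shows the bad event has probability $\to 1$.

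Second, your constants are looser than needed to hit the stated base threshold: with $N_{\mathrm{long}} \ge |S|R/(4n)$ and $M_L^R \le 2C_0 R = 4n^2 R$ you get $\delta = |S|/(16n^3)$, which forces $B > 3^{16n^3/|S|}$, not the claimed $3^{6n^3/|S|}$. You acknowledge that $C_0$ controls the threshold, but to actually reproduce the exponent $6n^3/|S|$ you need the paper's sharper Chebyshev constant $M_L^R \le 3n^2R$ and the un-halved first-level count $|S|R/(2n)$, giving $\delta = \frac{|S|R/(2n)}{3n^2R} = \frac{|S|}{6n^3}$. So as written the argument proves the theorem only for a larger base; tightening the two concentration steps to the paper's constants closes the gap.
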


The above theorem says that in the case $|S|>n/2$, as long as the base $B$ is large enough, the final distribution does not converge to Benford. We conjecture that this is in fact true regardless of the base.

\begin{conjecture}[Other Bases]
    The final collection of stick lengths does not converge to strong Benford behavior \emph{for any base $B$} if the size of $S$ is not $n/2$. Specifically, if $|S|>n/2$, then the limiting distribution depends on the mantissa of $L$ base $B$, and the density function of $\log_B(X/L)\pmod 1$ is skewed towards 1.
\end{conjecture}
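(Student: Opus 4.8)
The plan is to split on the sign of $|S|-n/2$. When $|S|<n/2$ the assertion is already \Cref{thm:|S|LessThanN/2}, which shows the mantissas fail to converge to \emph{any} continuous law --- hence in particular to the uniform law --- for every base $B$. So all the new work is in the subcritical range $|S|>n/2$, where the point is that one can pin down the limiting law of the mantissas \emph{exactly}.

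The first step is to revisit the continuous process $\mathcal{Q}$ coupled to the discrete process $\mathcal{P}$ in the proof of \Cref{thm:discreteStopAtHalfResidues} and observe that it is asymptotically \emph{subcritical}. Put $\bar r:=1-|S|/n<1/2$. By construction a living stick of length $\ell$ produces two children, each alive with probability $1-|S|/n+O(1/\ell)$ and, crucially, \emph{independently of the uniform ratio used to split it} (the death status depends only on the residue class, sampled independently of the ratio). So, as $L\to\infty$, $\mathcal{Q}$ becomes a subcritical Galton--Watson process whose mean number of living children is $2\bar r<1$; only this mean enters, so the possible dependence between siblings (the ``dependent'' feature needed when $\mathfrak{S}$ is a union of residue classes) plays no role. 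Counting dead children level by level, as in \Cref{lem: r_small_stick_expectation}, one obtains in the aggregated ($R\to\infty$) and $L\to\infty$ limit a \emph{geometric} depth distribution for a uniformly chosen ending stick,
\[
\lambda_j \ := \ \lim\ \prob(\text{a uniformly random ending stick dies at level }j) \ = \ (1-2\bar r)(2\bar r)^{j-1}, \qquad j\ge 1 ,
\]
so a positive fraction of ending sticks dies at each fixed level --- in sharp contrast with the critical case $|S|=n/2$, where the depth of a random ending stick tends to infinity and Benfordness follows.

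The heart of the matter is then a short computation. Conditioned on dying at level $j$, an ending stick of $\mathcal{Q}$ has length $X=L\prod_{i=1}^{j}U_i$ with $U_1,\dots,U_j$ i.i.d.\ uniform on $(0,1)$ (again because the tree shape, hence the depth, is independent of the ratios). Thus $-\log_B(X/L)=\sum_{i=1}^{D}E_i$ with $E_i$ i.i.d.\ $\mathrm{Exp}(\log B)$ and $D$ geometric with parameter $1-2\bar r$, and since a geometric sum of i.i.d.\ exponentials is exponential, $-\log_B(X/L)\sim\mathrm{Exp}((1-2\bar r)\log B)$. Periodising this density over $[0,1]$ shows that the limiting density of $\log_B(X/L)\bmod 1$ is proportional to $B^{(1-2\bar r)v}$ on $[0,1]$, which is \emph{strictly increasing} since $1-2\bar r=2|S|/n-1>0$ --- i.e.\ skewed towards $1$ --- while the limiting density of $M_B(X)=\{\log_B L+\log_B(X/L)\}$ is proportional to $B^{(1-2\bar r)\{v-M_B(L)\}}$, which for no base $B$ is the uniform density and genuinely depends on $M_B(L)$. (Equivalently $\E[X^{it}]/L^{it}\to\E[\mu(t)^{D}]=\frac{1-2\bar r}{(1-2\bar r)+it}$ with $\mu(t):=\E[U^{it}]=(1+it)^{-1}$, which vanishes for no real $t$, in particular not for $t=2\pi\ell/\log B$.) One then transfers this to $\mathcal{P}$ by a perturbation estimate in the spirit of \Cref{lem:smallPerturbPresBenford}: the total variation distance of the mantissa law of $\mathcal{Q}$ from uniform is shift-invariant, hence equal to a fixed positive constant for all $L$, and a $1+o(1)$ multiplicative perturbation changes it by $o(1)$; the perturbation bound holds because in the subcritical regime a proportion $1-o(1)$ of ending sticks of $\mathcal{P}$ die at bounded level and so have length $\Theta(L)\to\infty$, whence $\ell_j/h_j=1+o(1)$ for almost all ending sticks by \Cref{cor:relationEll_jH_j}. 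For $B>3^{6n^3/|S|}$ this recovers and refines \Cref{thm:|S|GreaterThanN/2}, the $j=1$ term alone already producing the $B^{v}$ skew.

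The hard part is not the above Fourier/exponential computation --- once the limiting law is identified, base-independence and the skew towards $1$ drop out --- but making ``$\mathcal{Q}$ has exactly this limiting law'' rigorous in the joint limit $R,L\to\infty$ (say with $R>(\log L)^3$, as in \Cref{thm:discreteStopAtHalfResidues}). Three points need care: (i) the branching description is only \emph{asymptotically} subcritical, the per-child survival probabilities being $1-|S|/n+O(1/\ell)$, so one must show that the ending sticks that ever descend into the regime $\ell=O(1)$ --- where these corrections are erratic and the discrete/continuous coupling of \Cref{cor:relationEll_jH_j} degrades --- form a vanishing fraction and contribute negligibly to the mantissa statistics; subcriticality makes such sticks exponentially rare in depth, but uniform tail estimates are required; (ii) one must aggregate over the $R$ independent trees, controlling empirical fluctuations by a law-of-large-numbers argument as in \Cref{lem:limitExists}; (iii) one must justify $\E[\mu(t)^{D}]\to\sum_j\lambda_j\mu(t)^j$ (uniform integrability in the depth), which follows from the geometric tails. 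I expect point (i), the uniform control of the short-stick regime, to be the principal obstacle.
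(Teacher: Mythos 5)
This statement is an open \emph{conjecture} in the paper: the authors offer no proof, only the partial result \Cref{thm:|S|GreaterThanN/2} (non-Benfordness for bases $B>3^{6n^3/|S|}$), a crude heuristic based on the extreme case $|\mathfrak{S}|=n$, and numerical evidence. So there is no ``paper proof'' to match yours against; what you have written is a proof \emph{plan} for an open problem, and it should be judged as such. On that footing, your plan is substantially stronger than anything in the paper. The reduction of the case $|S|<n/2$ to \Cref{thm:|S|LessThanN/2} is exactly right. For $|S|>n/2$, your central computation is correct and is the real content: the coupled continuous process is asymptotically a subcritical branching process with survival parameter $\bar r=1-|S|/n$, the depth of a uniformly chosen ending stick is asymptotically geometric with parameter $1-2\bar r$ (your $\lambda_j$ agrees with the ratio of expected deaths at level $j$ to $\E[M_1]=(2-2\bar r)/(1-2\bar r)$ from \Cref{lem: r_small_stick_expectation}), and a geometric sum of $\mathrm{Exp}(\log B)$ variables is $\mathrm{Exp}((1-2\bar r)\log B)$, giving a periodized density proportional to $B^{(1-2\bar r)v}$ for $\log_B(X/L)\bmod 1$. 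That is strictly increasing on $[0,1]$, never uniform, and shifts by $M_B(L)$ when one passes to the mantissa of $X$ itself --- which is precisely the three-part content of the conjecture, and it matches the paper's simulations. The characteristic-function form $\E[X^{it}]/L^{it}\to(1-2\bar r)/((1-2\bar r)+it)$ is a clean way to see base-independence. This identifies the limit law explicitly, which the paper leaves as ``an interesting open question.''

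What keeps this from being a proof is exactly what you flag, and I agree with your ranking of the difficulties. Point (i) is the serious one: the survival probability is $1-|S|/n+O(n/\ell)$ and, more subtly, the residue draw and hence the death event is only \emph{asymptotically} independent of the ratio $p$ (the distribution $\mathcal{D}_\ell$ and the rounding both depend on $\ell$, which depends on earlier ratios); you need a uniform statement that ending sticks whose ancestral path ever enters $\ell=O(f(L))$ for suitable $f$ contribute a vanishing fraction, in the spirit of \Cref{lem:shortDeadSticks} but adapted to $|S|>n/2$, before the i.i.d.-uniform description of the path ratios and the coupling bound \Cref{cor:relationEll_jH_j} can be applied to a $1-o(1)$ fraction of sticks. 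Point (ii) deserves one more sentence than you give it: since $\E[M_L]=O(1)$ here, the aggregated limit $R\to\infty$ is not optional, and both the numerator and denominator of the empirical depth distribution need a law of large numbers over the $R$ trees (second moments are bounded, so Chebyshev as in \Cref{cor: S_large_sticks_small} suffices). Also be careful that \Cref{lem:smallPerturbPresBenford} as stated transfers \emph{Benfordness}; you are using its (easy, but unproved in the paper) analogue that a $1+o(1)$ multiplicative perturbation preserves weak convergence to a fixed law with bounded density, which you should state and prove separately. None of these looks like more than technical work, so I would call this a viable route to resolving the conjecture rather than a complete argument.
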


Note that the proofs of \Cref{thm:|S|GreaterThanN/2}
 already strongly indicate that the above conjecture is true, although it remains an interesting open question to describe precisely what the distribution looks like.  

 We have also obtained strong empirical evidence for the conjecture.
\Cref{fig:MoreThanHalfRes} shows a clear skewness of the limiting distribution after normalizing by the starting length $L$. In this simulation, we used inputs $n=12$, $\mathfrak{S}=\{1, 2, 3, 4, 5, 6, 9, 10\}$, $L=82\cdot 10^{12000}$ and $R=1000$. It is not hard to see that \Cref{thm:|S|GreaterThanN/2} does not apply when $B = 10$. It is worth noting that even when we vary the stopping set and the significand of $L$, the pattern persists, as long as $n/2<|\mathfrak{S}|<n$.

\begin{figure}[ht]
    \centering
    \includegraphics[width=15cm]{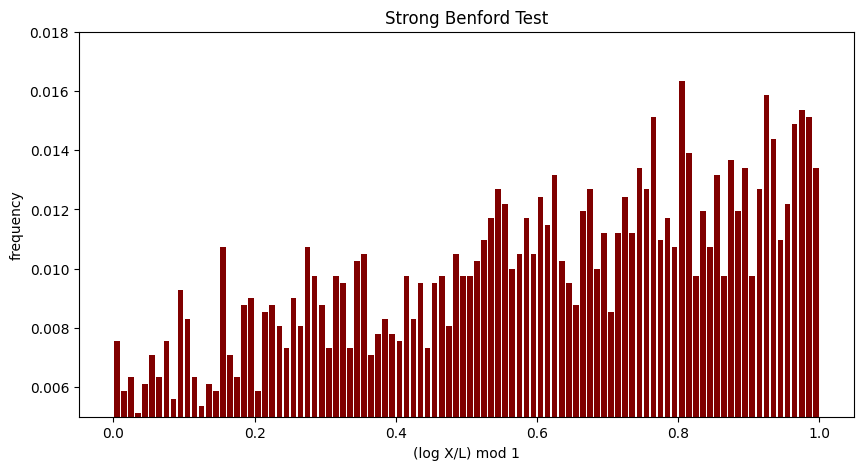}
    \caption{Stopping at 8 residues modulo 12 with $L=82\cdot 10^{12000}$ and $R=1000$}
    \label{fig:MoreThanHalfRes}
\end{figure}

A heuristic for this is that when $|\mathfrak{S}| = n$, all processes end at the first level, and the resulting stick lengths follow the uniform distribution on $\{1,\ldots,L-1\}$. It is not hard to show that for a random variable $X$ uniformly distributed, $\log(X)$ has smaller mantissa with lower probability and larger mantissa with higher probability.

\subsubsection{Proof of \Cref{thm:|S|LessThanN/2}}

\begin{lemma}
\label{lem:ExpShortSticks}
    Let 
    \begin{equation}
        M_{L,m} \ := \ \#\text{dead sticks generated by a stick of length $L$ that are of length less than $m$}.
    \end{equation}
    Then for all $L\notin\mathfrak{S}$, there exists constants $m,c$ only depending on $k$ and $n$ such that 
    \begin{equation}
        \E(M_{L,m})\ \ge\ c \E(M_L) + 1.
    \end{equation}
    \begin{proof}
        Let
        \[
        c = \frac{1}{2n+1}, \quad m = 2n^2.
        \]
        For $L \le m$, the result is clear since $\E(M_{L,m}) = \E(M_L) \ge 2$. We now proceed with induction and assume the result is true for positive integers less than $L > m$. We have,
        \begin{align}
            \E(M_{L,m}) \ &= \ \frac{2}{L-1}\sum_{\ell = 1}^{L-1} \E(M_{\ell,m}) \nonumber\\
            &= \ \frac{2}{L-1} \sum_{\substack{1 \le \ell \le L-1 \\ \ell \not\in \mathfrak{S}}} \E(M_{\ell, m}) \nonumber \\
            &\ge \ \frac{2}{L-1} \sum_{\substack{1 \le \ell \le L-1 \\ \ell \not\in \mathfrak{S}}} c\E(M_\ell) + \frac{2}{L-1}|[L-1] \setminus \mathfrak{S}|. \label{initial_expectation_calc_small_sticks}
        \end{align}
        We now show that
        \begin{equation} \label{stopping_set_size_comparison}
            |[L-1] \setminus \mathfrak{S}| \ \ge \ \frac{L-1}{2} + c|[L-1] \cap \mathfrak{S}|.
        \end{equation}
        Note that
        \begin{equation}
            |[L-1] \setminus \mathfrak{S}| \ \ge \ (n - |S|)\left( \frac{L-n-1}{n} \right) \ \ge \ \frac{(n+1)(L-n-1)}{2n}
        \end{equation}
        and
        \begin{equation}
            |[L-1] \cap \mathfrak{S}| \ \le \ |S|\left( \frac{L+n-1}{n} \right) \ \le \ \frac{(n-1)(L+n-1)}{2n}
        \end{equation}
        so it suffices to show that
        \begin{align}
            \frac{(n+1)(L-n-1)}{2n} \ \ge \ \ \frac{L-1}{2} + c\frac{(n-1)(L+n-1)}{2n} \nonumber\\
            \iff c \ \le \ \frac{(n+1)(L-n-1) - n(L-1)}{(n-1)(L+n-1)}.
        \end{align}
        We have
        \begin{align}
            \frac{(n+1)(L-n-1) - n(L-1)}{(n-1)(L+n-1)} \ & \ge \ \frac{L-1-n^2-n}{(n-1)(L-1+n)} \nonumber\\
            &\ge \ \frac{(2n^2-n^2-n}{(n-1)(2n^2+n-1)} \nonumber \\
            &\ge \ \frac{n}{2n^2+n-1} \nonumber\\
            &\ge \ \frac{1}{2n+1} \ = \ c.
        \end{align}
        Hence, \eqref{stopping_set_size_comparison} is true, and can be plugged into \eqref{initial_expectation_calc_small_sticks} to obtain
        \begin{align}
            \E(M_{L.m}) \ &\ge \ \frac{2}{L-1} \sum_{\substack{1 \le \ell \le L-1 \\ \ell \not\in \mathfrak{S}}} c\E(M_\ell) + \frac{2c}{L-1}|[L-1] \cap \mathfrak{S}| + 1 \nonumber \\
            &\ge \ 1 + \frac{2}{L-1} \sum_{\ell = 1}^{L-1} c\E(M_\ell) \nonumber \\
            &\ge \ c\E(M_L) + 1.
        \end{align}
        The induction is complete.
    \end{proof}
\end{lemma}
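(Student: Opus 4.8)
The plan is to prove the inequality by strong induction on $L$, exploiting the first-break recursion that \emph{both} $\E(M_L)$ and $\E(M_{L,m})$ satisfy. From a stick of length $L\notin\mathfrak{S}$ the first break is uniform over the $L-1$ interior points, producing children of lengths $\ell$ and $L-\ell$; a child whose length lies in $\mathfrak{S}$ dies, contributing $1$ to $M_L$ and $1$ to $M_{L,m}$ exactly when that length is $<m$, while a living child spawns an independent sub-process. Adopting the conventions $M_\ell=1$ and $M_{\ell,m}=\mathbf{1}[\ell<m]$ for $\ell\in\mathfrak{S}$, linearity of expectation and the symmetry $\ell\leftrightarrow L-\ell$ give
\begin{equation}
\E(M_L) \ = \ \frac{2}{L-1}\sum_{\ell=1}^{L-1}\E(M_\ell), \qquad \E(M_{L,m}) \ = \ \frac{2}{L-1}\sum_{\ell=1}^{L-1}\E(M_{\ell,m}).
\end{equation}
I would leave $c$ and $m$ (both functions of $n$ only, say $c=\tfrac{1}{2n+1}$ and $m$ a suitable multiple of $n^2$) unspecified until the induction dictates what they must be; the standing hypothesis of this subsection, $|S|<n/2$, will be essential.

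For the base case take $L\le m$ with $L\notin\mathfrak{S}$: every dead stick produced then has length at most $L-1<m$, so $M_{L,m}=M_L$ identically, and since $L\notin\mathfrak{S}$ forces at least one break we have $M_L\ge 2$ deterministically, whence $\E(M_{L,m})=\E(M_L)\ge c\,\E(M_L)+1$ whenever $c\le \tfrac12$. For the inductive step, assume $L>m$, $L\notin\mathfrak{S}$, and the claim for all smaller lengths. Split the sum for $\E(M_{L,m})$ by whether $\ell\in\mathfrak{S}$: on the $\ell\in\mathfrak{S}$ part use $\E(M_{\ell,m})\ge 0$, and on the $\ell\notin\mathfrak{S}$ part apply the inductive hypothesis $\E(M_{\ell,m})\ge c\,\E(M_\ell)+1$. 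This produces
\begin{equation}
\E(M_{L,m}) \ \ge \ \frac{2}{L-1}\Biggl( c\!\!\sum_{\substack{1\le\ell\le L-1\\ \ell\notin\mathfrak{S}}}\!\!\E(M_\ell) \ + \ \bigl|[1,L-1]\setminus\mathfrak{S}\bigr| \Biggr).
\end{equation}

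The crux is then the purely combinatorial estimate, valid for $L>m$,
\begin{equation}
\bigl|[1,L-1]\setminus\mathfrak{S}\bigr| \ \ge \ \frac{L-1}{2} \ + \ c\,\bigl|[1,L-1]\cap\mathfrak{S}\bigr|,
\end{equation}
which is exactly where $|S|<n/2$ enters: $\mathfrak{S}$ has density strictly below $\tfrac12$ (equivalently $n-|S|\ge\tfrac{n+1}{2}$), so from $|[1,L-1]\cap\mathfrak{S}|\le|S|\cdot\tfrac{L+n-1}{n}$ and $|[1,L-1]\setminus\mathfrak{S}|\ge (n-|S|)\cdot\tfrac{L-n-1}{n}$ the surplus density $\tfrac{n-|S|}{n}-\tfrac12>0$ must simultaneously swallow the floor-rounding error terms (this is what forces $m$ to be a large enough multiple of $n^2$) and the term $c\,|[1,L-1]\cap\mathfrak{S}|$ (this is what forces $c$ to be of order $1/n$). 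Feeding this back in, the $\tfrac{L-1}{2}$ contributes precisely $\tfrac{2}{L-1}\cdot\tfrac{L-1}{2}=1$, and combining the rest with $\E(M_\ell)=1$ for $\ell\in\mathfrak{S}$ reassembles $\tfrac{2c}{L-1}\sum_{\ell=1}^{L-1}\E(M_\ell)=c\,\E(M_L)$, closing the induction. The main obstacle is not conceptual but the simultaneous calibration of $c$ and $m$: one must choose them so that the base-case requirement ($c\le\tfrac12$), the threshold $L>m$ under which the density estimates have small enough error, and the $c$-absorption of $|[1,L-1]\cap\mathfrak{S}|$ are all compatible. The analogue for $k$-ary breaking follows verbatim with $\tfrac{2}{L-1}$ replaced by the appropriate $k$-fold recursion weight.
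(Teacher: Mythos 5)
Your proposal is correct and follows essentially the same route as the paper: the same first-break recursion, the same induction with base case $L\le m$, the same splitting of the sum over $\ell\in\mathfrak{S}$ versus $\ell\notin\mathfrak{S}$, and the same key combinatorial inequality $|[1,L-1]\setminus\mathfrak{S}|\ge\frac{L-1}{2}+c\,|[1,L-1]\cap\mathfrak{S}|$ driven by $|S|<n/2$, with the identical choice $c=\frac{1}{2n+1}$ (the paper takes $m=2n^2$, consistent with your "suitable multiple of $n^2$"). The calibration of $c$ and $m$ and the reassembly of $c\,\E(M_L)+1$ at the end all match the paper's argument.
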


\begin{lemma}
Let $M_{L}^R$ be the total number of dead sticks coming from a process of breaking $R$ identical sticks of length $L$, and $M_{L,m}^R$ be the number of those shorter than $m$. Then for $m$ and $c$ satisfying the conclusion of \Cref{lem:ExpShortSticks}, we have as $R\to\infty$,
    \begin{equation}
        \prob\left(\frac{M_{L,m}^R}{M_{L}^R}\le \frac{c}{3}\right)\ \to\ 0.
    \end{equation}
    \begin{proof}
        By Chernoff's inequality, we have
        \begin{equation}
            \prob\left(M_{L,m}^R\le\frac{1}{2}R\E(M_{L,m})\right)\ \le\ e^{-R\E(M_{L,m})/8}
        \end{equation}
        and 
        \begin{equation}
            \prob\left(M_{L}^R\ \ge \frac{3}{2}R\E(M_{L})\right)\ \le\ e^{-R\E(M_{L})/10}.
        \end{equation}
        So
        \begin{equation}
            \prob\left(M_{L,m}^R\ \ge \frac{1}{2}R\E(M_{L,m}) \text{ and } M_{L}^R\ \le \frac{3}{2}R\E(M_{L})\right)\ \ge\ 1-e^{-R\E(M_{L,m})/8}-e^{-R\E(M_{L})/10}.
        \end{equation}
        In that case, 
        \begin{equation}
            \frac{M_{L,m}^R}{M_L^R} \ge \frac{\frac{1}{2}R\E(M_{L,m})}{\frac{3}{2}R\E(M_{L})} = \frac{\E(M_{L,m})}{3\E(M_{L})}\ \ge\ c/3.
        \end{equation}
        Therefore it suffices to show that 
        \begin{equation}
            1-e^{-R\E(M_{L,m})/8}-e^{-R\E(M_{L})/10}\ \to\ 0
        \end{equation}
        as $R\to\infty$. To do this, it again suffices to show that $\E(M_{L,m})>0$ and $\E(M_L)>0$. Clearly, $\E(M_L)\ge 1$, so it follows from \Cref{lem:ExpShortSticks} that $\E(M_{L,m})\ge c+1>0$. 
    \end{proof}
\end{lemma}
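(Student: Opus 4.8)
The plan is to exploit the fact that the $R$ starting sticks evolve independently, so that for each fixed $L$ both totals decompose as sums of $R$ i.i.d.\ blocks: $M_L^R = \sum_{i=1}^R M_L^{(i)}$ and $M_{L,m}^R = \sum_{i=1}^R M_{L,m}^{(i)}$, where $(M_{L,m}^{(i)}, M_L^{(i)})$ is the (coupled) pair of counts produced by the $i$-th initial stick and is distributed as $(M_{L,m}, M_L)$. Since $0 \le M_{L,m}^{(i)} \le M_L^{(i)} \le L$, each block is bounded, so a standard multiplicative Chernoff bound applies to each sum separately. I would first record the two one-sided concentration estimates
\[
\prob\!\left(M_{L,m}^R \le \tfrac12 R\,\E(M_{L,m})\right) \ \le \ e^{-c_1 R}, \qquad \prob\!\left(M_L^R \ge \tfrac32 R\,\E(M_L)\right) \ \le \ e^{-c_2 R},
\]
where $c_1 = c_1(L) > 0$ and $c_2 = c_2(L) > 0$ depend only on $L$; explicitly, after normalizing the summands by $L$ the usual multiplicative Chernoff exponents give $c_1 \asymp \E(M_{L,m})/L$ and $c_2 \asymp \E(M_L)/L$, both strictly positive. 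Note that independence of these two events is not needed: a union bound suffices.

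Next I would combine the two ``good'' events. On the event $\{M_{L,m}^R > \tfrac12 R\,\E(M_{L,m})\} \cap \{M_L^R < \tfrac32 R\,\E(M_L)\}$ one has
\[
\frac{M_{L,m}^R}{M_L^R} \ > \ \frac{\tfrac12 R\,\E(M_{L,m})}{\tfrac32 R\,\E(M_L)} \ = \ \frac{\E(M_{L,m})}{3\,\E(M_L)} \ \ge \ \frac{c}{3},
\]
where the last step is precisely \Cref{lem:ExpShortSticks}, which yields $\E(M_{L,m}) \ge c\,\E(M_L) + 1 \ge c\,\E(M_L)$, and the division is legitimate since $\E(M_L) \ge 1 > 0$. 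Hence the event $\{M_{L,m}^R / M_L^R \le c/3\}$ is contained in the complement of this intersection, so by the union bound together with the two Chernoff estimates,
\[
\prob\!\left(\frac{M_{L,m}^R}{M_L^R} \le \frac{c}{3}\right) \ \le \ e^{-c_1 R} + e^{-c_2 R}.
\]

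Finally I would check that the exponents genuinely decay: $\E(M_L) \ge 1$ is immediate since every process produces at least one dead stick, and \Cref{lem:ExpShortSticks} then forces $\E(M_{L,m}) \ge c + 1 > 0$, so $c_1, c_2 > 0$. With $L$ (and therefore $c_1$, $c_2$, $\E(M_L)$, $\E(M_{L,m})$) held fixed while $R \to \infty$, the right-hand side tends to $0$, which is the claim. This lemma is essentially routine concentration; the only point requiring any care — and the one I would spell out — is the passage from the textbook Chernoff bound for $[0,1]$-valued summands to the present bounded-but-$L$-sized summands, which is handled simply by dividing through by $L$ before applying the inequality, the resulting $L$-dependence of the exponents being harmless since $L$ is fixed throughout the limit.
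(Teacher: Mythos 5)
Your proof is correct and follows essentially the same route as the paper's: separate concentration bounds for $M_{L,m}^R$ and $M_L^R$, a union bound, and the ratio estimate $\E(M_{L,m})\ge c\,\E(M_L)$ from \Cref{lem:ExpShortSticks}. Your explicit normalization of the summands by $L$ before applying Chernoff is in fact slightly more careful than the paper, which quotes exponents $R\E(M_{L,m})/8$ and $R\E(M_L)/10$ as though the summands were $[0,1]$-valued; since $L$ is held fixed as $R\to\infty$, this refinement does not change the conclusion.
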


Now to conclude the proof of (2), note that if the limiting distribution were Benford, by \Cref{def:StrongBenfordForD}, the collection of mantissas $M_B(X)$ of dead sticks converges to uniform distribution on $[0,1]$, which is continuous. So we must have that 
\begin{equation}
    \frac{M_{L,m}}{M_L}\ =\ \frac{\# \{X:M_B (X)\in\{M_B(1),M_B(2),\dots,M_B(m-1)\}\}}{M_L}
    \ \to \ 0
\end{equation}
as $L\to\infty$ with probability going to 1 as $R\to\infty$. In fact, this shows that the collection of mantissas of such a process does not converge to \emph{any} continuous distribution on $[0,1]$ as $R\to\infty$ and $L\to\infty$.

\subsubsection{Proof of \Cref{thm:|S|GreaterThanN/2}}

Let $M_L^R$ be the total number of dead sticks obtained starting from $R$ sticks of length $L$.

\begin{lemma}
    We have that 
    \begin{equation}
        \E[M_L^R] \ \le \ 2n^2R.
    \end{equation}
\end{lemma}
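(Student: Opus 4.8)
The plan is to first reduce to a single initial stick, then run a strong induction on the length. Because the $R$ initial sticks of length $L$ break independently, $M_L^R$ is a sum of $R$ i.i.d.\ copies of $M_L$, the number of dead sticks generated from one stick of length $L$, so by linearity of expectation it suffices to show $\E[M_L] \le 2n^2$ for every $L \notin \mathfrak{S}$ (with the convention $\E[M_L] = 1$ when $L \in \mathfrak{S}$, since such a stick never breaks). This is the $|S| > n/2$ analogue of \Cref{lem:upperBoundNumSticks}: stopping at more than half of the residue classes makes the branching effectively \emph{subcritical}, so the expectation is bounded rather than growing like $\log L$.

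First I would record the deterministic inequality $M_L \le L$: total length is conserved and every surviving fragment has length at least $1$, so there are at most $L$ dead sticks. In particular $\E[M_L] \le L \le 2n^2$ whenever $L \le 2n^2$, which handles the base of the induction together with the trivial case $L \in \mathfrak{S}$.

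For the inductive step, take $L > 2n^2$ with $L \notin \mathfrak{S}$ and assume the bound for all smaller lengths. The first break produces pieces of lengths $x$ and $L-x$ with $x$ uniform on $\{1,\dots,L-1\}$, giving
\begin{equation}
\E[M_L] \ = \ \frac{2}{L-1}\sum_{\ell=1}^{L-1}\E[M_\ell] \ \le \ \frac{2}{L-1}\bigl(a + 2n^2 b\bigr),
\end{equation}
where $a = |[1,L-1]\cap\mathfrak{S}|$ and $b = |[1,L-1]\setminus\mathfrak{S}|$ (using $\E[M_\ell]=1$ on $\mathfrak{S}$ and the inductive hypothesis off it). A short manipulation shows that $\E[M_L]\le 2n^2$ is equivalent to $n^2 b \le (n^2-1)a$. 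I would then estimate $a$ and $b$ by counting residues: each class modulo $n$ meets $[1,L-1]$ in between $\tfrac{L-1}{n}-1$ and $\tfrac{L-1}{n}+1$ integers, so $a \ge \tfrac{|S|}{n}(L-1) - |S|$ and $b \le \tfrac{n-|S|}{n}(L-1) + |S|$; substituting these, the inequality $n^2 b \le (n^2-1)a$ reduces to a lower bound of the form $L-1 \ge \frac{|S|(2n^3-n)}{(2n^2-1)|S|-n^3}$, whose right-hand side is decreasing in $|S|$ and, at its worst value $|S| = n/2+1$, is at most $2n^2-1$. Hence $L > 2n^2$ closes the induction.

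The hard part is really just bookkeeping with constants: the residue counts for $a$ and $b$ carry $O(n)$ error terms, and one must check that the threshold past which the recursion inequality holds is genuinely at most $2n^2$, so that it overlaps the range where the crude bound $M_L \le L$ already suffices. This uses $|S| \ge n/2 + 1$ (legitimate since $n$ is even here) and $|S| \le n-1$ (needed so that a starting length $L \notin \mathfrak{S}$ can exist at all), and requires no probabilistic input beyond the single-step recursion.
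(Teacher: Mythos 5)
Your proposal is correct and follows essentially the same route as the paper: reduce to $R=1$ by linearity, then strong induction on $L$ with base case $L\le 2n^2$ via $M_L\le L$, and the one-step recursion $\E[M_L]=\tfrac{2}{L-1}\sum_{\ell<L}\E[M_\ell]$ split according to whether $\ell\in\mathfrak{S}$. The only difference is cosmetic bookkeeping (you recast the target as $n^2 b\le(n^2-1)a$ and optimize over $|S|$, while the paper bounds the sum directly using $n-|S|\le\tfrac{n-1}{2}$), and your constants check out.
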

\begin{proof}
    We show the result when $R = 1$ via induction on $L$. Let $M_L^1 = M_L$. The result is clearly true for $L \le 2n^2$ since $M_L \le L$, so assume that $L > 2n^2$ and the result holds for all positive integers smaller than $L$. We have that,
    \begin{align}
        \E[M_L] \ &= \ \frac{1}{L-1}\sum_{\ell = 1}^{L-1} (\E[M_{\ell}] + \E[M_{L-\ell}]) \nonumber\\ 
        &= \ \frac{2}{L-1} \sum_{\ell = 1}^{L-1} \E[M_{\ell}] \nonumber\\
        &\le \ 2 + \frac{2}{L-1} \sum_{\substack{1 \le \ell \le L-1 \\ \ell \not\in \mathfrak{S}}} 2n^2 \nonumber\\
        &\le \ 2 + \frac{2}{L-1} \left\lceil \frac{L-1}{n} \right\rceil (n-|S|) \cdot 2n^2\nonumber \\
        &\le \ 2 + \frac{2}{L-1}\left( \frac{L+n-1}{n} \right) \left( \frac{n-1}{2} \right) 2n^2 \nonumber\\
        &= \ 2 + 2n(n-1) \frac{L+n-1}{L-1}\nonumber \\
        &\le \ 2 + 2n(n-1) \frac{2n^2+n}{2n^2} \nonumber\\
        &= \ 2n^2 -n + 1 \ \le \ 2n^2.
    \end{align}
    The induction is complete. The result for general $R$ follows from linearity of expectation.
\end{proof}

\begin{corollary} \label{cor: S_large_sticks_small}
    With probability at least
    \begin{equation}
        1 - \frac{L^2}{n^4R}
    \end{equation}
    we have $M_L^R \le 3n^2R$.
\end{corollary}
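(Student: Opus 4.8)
The plan is to combine the expectation bound from the preceding lemma with Chebyshev's inequality, exploiting the fact that $M_L^R$ is a sum of $R$ independent copies of the single-stick count $M_L$. First I would record that since the process starts from $R$ identically distributed sticks that break independently, $M_L^R = \sum_{i=1}^R M_L^{(i)}$ where the $M_L^{(i)}$ are i.i.d.\ copies of $M_L = M_L^1$. The previous lemma (applied with $R=1$) gives $\E[M_L] \le 2n^2$, hence $\E[M_L^R] \le 2n^2 R$ by linearity.

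The key observation for the variance is the crude but deterministic bound $M_L \le L$: a collection of dead sticks of positive integer length summing to $L$ can contain at most $L$ of them. Consequently $\Var[M_L] \le \E[M_L^2] \le L^2$, and by independence $\Var[M_L^R] = R\,\Var[M_L] \le RL^2$. Now Chebyshev's inequality yields
\begin{align}
\prob\left(M_L^R > 3n^2 R\right) \ &\le \ \prob\left(M_L^R - \E[M_L^R] > n^2 R\right) \nonumber \\
&\le \ \prob\left(\left|M_L^R - \E[M_L^R]\right| \ge n^2 R\right) \ \le \ \frac{\Var[M_L^R]}{(n^2 R)^2} \ \le \ \frac{RL^2}{n^4 R^2} \ = \ \frac{L^2}{n^4 R},
\end{align}
where the first inequality uses $\E[M_L^R] \le 2n^2 R$. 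Taking complements gives $\prob(M_L^R \le 3n^2 R) \ge 1 - \frac{L^2}{n^4 R}$, as claimed.

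There is no real obstacle here; the only mild subtlety is making sure the independence structure is invoked correctly (so that both $\E$ and $\Var$ scale linearly in $R$) and that the trivial bound $M_L \le L$ is enough for the variance — it is, precisely because we are in the regime $R = \omega(L^2)$ where this corollary will be applied, so the error term $L^2/(n^4 R)$ is $o(1)$. One could get a tighter variance bound from the recursion for $\E[M_L^2]$, but it is unnecessary for the stated estimate.
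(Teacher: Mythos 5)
Your proof is correct and follows essentially the same route as the paper: bound $\E[M_L^R]\le 2n^2R$ via the preceding lemma, use the trivial bound $M_L\le L$ to get $\Var[M_L^R]\le RL^2$ by independence, and apply Chebyshev's inequality to the deviation threshold $n^2R$. The only difference is that you spell out the justification for $\Var[M_L]\le L^2$ and the event inclusion, which the paper leaves implicit.
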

\begin{proof}
    First, note that, trivially, $\Var[M_L] \le L^2$ so that $\Var[M_L^R] \le RL^2$. Then, Chebyshev's inequality implies that
    \begin{equation}
        \prob(M_L^R > 3n^2R) \ \le \ \prob(|M_L^R - \E(M_L^R)| > n^2R) \ \le \ \frac{RL^2}{(n^2R)^2} \ = \ \frac{L^2}{n^4R}.
    \end{equation}
\end{proof}

\begin{lemma} \label{lem: S_large_some_sticks_large}
    Let $a > 2$ be some real number and assume $L > \frac{2an}{a-2}$. With probability at least
    \begin{equation}
        1 - \frac{16n^2}{|S|^2R}
    \end{equation}
    the number of dead sticks in the first level of length at least $L/a$ is at least $|S|R/(2n)$.
\end{lemma}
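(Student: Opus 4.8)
The plan is to follow the template of \Cref{lem: r_small_first_level}: identify the relevant count as a sum of i.i.d.\ bounded contributions, compute its expectation, bound its variance crudely, and finish with Chebyshev's inequality. First I would set up the first level: each of the $R$ sticks of length $L$ is cut at an integer point $X$ drawn uniformly from $\{1,\dots,L-1\}$, producing children of lengths $X$ and $L-X$, and the $R$ cuts are independent. Let $M$ be the number of children generated in this level that are \emph{dead} (length in $\mathfrak{S}$) \emph{and} of length at least $L/a$. Writing $M=\sum_{i=1}^{R}M_i$ with $M_i\in\{0,1,2\}$ the contribution of the $i$-th stick, the $M_i$ are i.i.d., so it suffices to understand $M_i$.

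Next I would compute $\E[M_i]$. Since $L>\tfrac{2an}{a-2}$ forces $L/a>1$, no child of length at least $L/a$ has length $1$, so for such a child ``dead'' is equivalent to ``residue in $S$ modulo $n$''. The two children have lengths $X$ and $L-X$, and $X\mapsto L-X$ is a bijection of $\{1,\dots,L-1\}$, so the two children are ``dead and long'' with equal probability; hence
\begin{equation}
\E[M_i] \ = \ \frac{2}{L-1}\,\#\{x\in[1,L-1]:\ x\ge L/a,\ x\bmod n\in S\}.
\end{equation}
A short counting argument bounds this below: the integers in $[\lceil L/a\rceil,L-1]$ number at least $L(1-1/a)-1$, and every block of $n$ consecutive integers contains exactly $|S|$ with residue in $S$, so the count is at least $|S|\big(\tfrac{L(1-1/a)-1}{n}-1\big)$; the hypothesis on $L$ then yields $\E[M_i]\ge |S|/n$, and therefore $\E[M]\ge |S|R/n$. (One should treat the residue class $r=0$ separately, as is done elsewhere in the paper, but the estimate is unchanged.)

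Finally, since $M_i\le 2$ we have $\Var[M_i]\le 4$, so $\Var[M]\le 4R$, and Chebyshev's inequality gives
\begin{equation}
\prob\!\left(M<\tfrac12\E[M]\right) \ \le \ \frac{4\,\Var[M]}{\E[M]^2} \ \le \ \frac{16R}{(|S|R/n)^2} \ = \ \frac{16n^2}{|S|^2R}.
\end{equation}
On the complementary event, $M\ge\tfrac12\E[M]\ge |S|R/(2n)$, which is the asserted conclusion.

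The routine parts here are the variance bound and the Chebyshev step; the one place requiring care is the counting estimate for $\E[M_i]$, namely tracking the floor and endpoint corrections tightly enough to land on the clean bound $\E[M_i]\ge|S|/n$ under precisely the stated hypothesis $L>\tfrac{2an}{a-2}$. I expect that to be the only real obstacle, and a minor one.
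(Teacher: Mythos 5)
Your proposal is correct and follows essentially the same route as the paper: decompose the count into i.i.d.\ per-stick contributions, lower-bound the expectation by $|S|R/n$ via counting residues in $S$ among the integers in $[\lceil L/a\rceil,L-1]$, bound the variance by $4R$, and apply Chebyshev to get the stated $16n^2/(|S|^2R)$. The only difference is cosmetic (you track the per-stick expectation $\E[M_i]\ge |S|/n$ where the paper tracks the per-child probability $\ge |S|/(2n)$), and the $\pm 1$ slack in your endpoint/floor accounting is of the same order as the slack already present in the paper's own estimate.
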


\begin{proof}
    Denote this quantity by $M_{L,a}^R(1)$. Then, given a stick of length $L$, the number of ways the left child can die and be of length at least $L/a$ is bounded below by
    \begin{equation}
        |S|\left\lfloor \frac{L-L/a}{n} \right\rfloor \ \ge \ |S| \left( \frac{L}{n}\left(1-\frac{1}{a}\right) - 1 \right) \ \ge \ \frac{|S|L}{2n}.
    \end{equation}
    Thus, the probability of any arbitrary child being of length at least $L/a$ and dead is at least $|S|/(2n)$. It follows that
    \begin{equation}
        \E[M_{L,a}^R(1)] \ \ge \ \frac{|S|R}{n}.
    \end{equation}
    Furthermore,
    \begin{equation}
        \Var[M_{L,a}^R(1)] \ \le \ 4R
    \end{equation}
    by independence. Thus, we have, by Chebyshev's inequality, 
    \begin{equation}
        \prob\left(M_{L,a}^R(1) \le \frac{|S|R}{2n}\right) \ \le \ \prob\left( \left|M_{L,a}^R(1) - \E[M_{L,a}^R(1)]\right| \ge \frac{|S|R}{2n}\right) \ \le \ \frac{4R}{\left( \frac{|S|R}{2n}\right)^2} \ = \ \frac{16n^2}{|S|^2R}.
    \end{equation}
\end{proof}
Now, set $a = 3$ and let $L > 6n$. Note that by \Cref{lem: S_large_some_sticks_large} and \Cref{cor: S_large_sticks_small}, with probability at least
\begin{equation}
    1 - \frac{L^2}{n^4R} - \frac{16n^2}{|S|^2R}
\end{equation}
the proportion of dead sticks of length at least $L/3$ is bounded below by
\begin{equation}
    \frac{|S|R}{2n}(3n^2R)^{-1} \ = \ \frac{|S|}{6n^3}.
\end{equation}
As $L,R \to \infty$ in a manner such that $R$ grows faster than $L^2$, this probability approaches $1$. Now let, 
\begin{equation}
    B \ > \ 3^{6n^3/|S|}.
\end{equation}
We obtain that
\begin{equation}
    \log_B(3)\ < \ \frac{|S|}{6n^3}
\end{equation}
but at least $\frac{|S|}{6n^3}$ of the dead sticks are in $[L/3, L]$ so that at least the same fraction of normalized mantissas of dead sticks are in $[1 - \log_B(3), 1]$. It follows that the distribution of mantissas of dead sticks cannot approach the uniform distribution as $R \to \infty$ for any $L > 6n$, nor can such be the case as $L \to \infty$.

\subsection{General Number of Parts}
\label{sec:discreteGeneralParts}
Given \Cref{conj: contbreaks_dependent}, it seems likely that a similar result would hold for the discrete analogue. Indeed, we make the following conjecture, which is supported by our simulation results (see, for example,  \Cref{fig:generalnumofparts}).
\begin{figure}[ht]
    \centering
    \includegraphics[width=15cm]{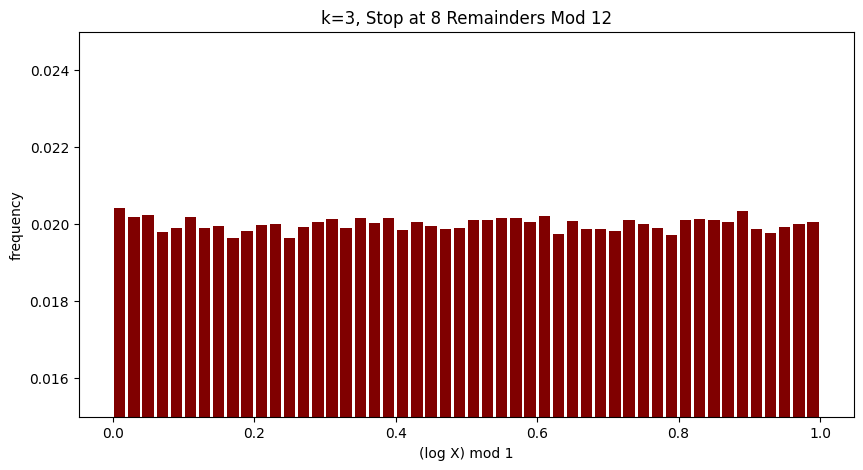}
    \caption{Break Into 3 Pieces and Stop at 2/3 of the Residue Classes.}
    \label{fig:generalnumofparts}
\end{figure}
\begin{conjecture}[General number of parts]
\label{conj:discreteGeneralK}
    Fix some positive integer $k\geq 2$, and consider the process where we break each stick into $k$ pieces by choosing $k-1$ cut points recursively following the uniform distribution\footnote{Namely, choose the first cut point according to the uniform distribution as usual, and then choose the next cut point on the second fragment according to the uniform distribution on that fragment, and so on. If at some point the second fragment has length 1, then the breaking stops - so when the stick is short, it is possible that it only breaks into less than $k$ pieces.}. Fix a modulus $n =  tk$ for some $t\geq 1$ and a subset $S\subset\{0,\dots,n-1\}$ of size $(t-1)k$ representing the residue classes. Let the stopping set be
    \begin{equation}
        \mathfrak{S} \ := \ \{1\}\cup\{m\in\Z_+ :m = qn+r,\  r\in S, q\in \Z\}.
    \end{equation}
    If we start with $R$ identical sticks of positive integer length $L\notin\mathfrak{S}$, then the collection of ending stick lengths converges to strong Benford behavior given that $R>f(L)$ as $L \to\infty$, where $f(L)$ is some function that goes to infinity as $L\to\infty$. Moreover, if the number of residue classes constituting the stopping set is not equal to $(t-1)k$, then the resulting stick lengths do not converge to strong Benford behavior.

\end{conjecture}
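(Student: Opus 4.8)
The plan is to run the blueprint of the $k=2$ results---\Cref{thm:discreteStopAtHalfResidues} for the Benford direction and \Cref{thm:|S|LessThanN/2}, \Cref{thm:|S|GreaterThanN/2} for the non-Benford direction---while adapting each ingredient to the recursive $k$-part breaking. First I would set up a coupling: a discrete process $\mathcal{P}$ (the process of the conjecture) and a continuous process $\mathcal{Q}$ driven by the same randomness. For each living stick in $\mathcal{P}$ of length $\ell$, generate the $k-1$ cut points one at a time: at sub-step $j$ pick a uniform ratio $p^{(j)}\in(0,1)$ and, \emph{independently}, a residue class $r^{(j)}\in\{0,\dots,n-1\}$ from a distribution $\mathcal{D}_\ell$ approximating the uniform one, then place the cut at the corresponding integer point of the current fragment; mirror each sub-step in $\mathcal{Q}$ with the exact real ratios, and declare a stick in $\mathcal{Q}$ dead exactly when its counterpart in $\mathcal{P}$ is. Because $|S|=(t-1)k$ of the $n=tk$ residues lie in $\mathfrak{S}$ (plus length $1$), each new stick in $\mathcal{Q}$ dies with probability $1-1/k+O_{k,n}(1/\ell)$, i.e.\ survives with probability $1/k+O_{k,n}(1/\ell)$; this is precisely the regime $r=1/k$ of \Cref{conj: contbreaks_dependent}, and since the residue classes of sibling pieces are correlated we genuinely need the \emph{dependent} version of that theorem.

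With the coupling in place, the Benford direction splits into the two familiar halves. (a) Show $\mathcal{Q}$ is ``close'' to the process of \Cref{conj: contbreaks_dependent}: the only discrepancy is the $O_{k,n}(1/\ell)$ error in the per-stick death probability, which is negligible once we discard sticks of length $<\log^2 L$; an analogue of \Cref{lem:shortDeadSticks}, \Cref{cor:numStickLower}, and \Cref{cor:mostSticksAreLong} for general $k$ (with constants depending on $k,n$) shows those discarded sticks are an $o(1)$ fraction, so $\mathcal{Q}$ converges to strong Benford as $R\to\infty$. (b) Show the coupled sticks obey $Y_j(1-o(1))\le X_j\le Y_j(1+o(1))$: prove the analogue of \Cref{lem:ratiosErrorBound}, namely $|X/\ell-p|=O_{k,n}(1/\ell)$ for every child---the recursion over the $k-1$ sub-cuts only multiplies the implied constant by a factor depending on $k$, provided the successive fragments stay comparable to $\ell$, which again holds once $\ell\ge\log^2 L$---then telescope exactly as in \Cref{cor:h_jAndl_jRelation}, \Cref{cor:relationEll_jH_j} and \Cref{lem:closenessOfPAndQ}, using that almost every stick dies within $g(L)=R(\log L)\nu(L)$ levels (an analogue of \Cref{lem:upperBoundNumSticks}: $\E[M_L]=O_{k,n}(\log L)$) and has length $\ge f(L)=\log^2 L$, with $g(L)=o(f(L))$. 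Finally \Cref{lem:smallPerturbPresBenford} transfers Benfordness from $\mathcal{Q}$ to $\mathcal{P}$.

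For the non-Benford direction I would split on the sign of $|S|-(t-1)k$. If $|S|<(t-1)k$, the death density is $<1-1/k$ strictly, so I would mimic \Cref{lem:ExpShortSticks}: show $\E[M_{L,m}]\ge c\,\E[M_L]+1$ for constants $m,c$ depending only on $k,n$, the crux being the counting inequality $|[L-1]\setminus\mathfrak{S}|\ge\tfrac{k-1}{k}(L-1)+c\,|[L-1]\cap\mathfrak{S}|$, valid because $(n-|S|)/n>1/k$; a Chernoff argument as in the $k=2$ non-Benford proof then forces a positive proportion of dead sticks to have bounded length, so the mantissas cannot converge to any continuous distribution, in particular not the uniform. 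If $|S|>(t-1)k$, I would mimic \Cref{thm:|S|GreaterThanN/2}: bound $\E[M_L^R]=O_{k,n}(R)$ and show a positive proportion of dead sticks already appear at level $1$ with length $\ge L/k$ (for each of the $k-1$ cut points there are $\ge|S|\lfloor(L-L/k)/n\rfloor$ dying positions of this size), hence a positive fraction of normalized mantissas lie in $[1-\log_B k,1]$; for $B>k^{c'}$ with $c'$ depending on $k,n,|S|$ this interval is too short for uniformity, exactly as before.

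The main obstacle is the recursive cut-point selection: unlike $k=2$, where a single rounding occurs, here the $j$-th cut is chosen on a fragment whose length has already been perturbed by the previous $j-1$ roundings, so a priori the errors compound multiplicatively. One must show that the per-stick error $|X/\ell-p|$ nonetheless stays $O_{k,n}(1/\ell)$ uniformly in all the sub-ratios---this amounts to tracking, as in \Cref{lem:ratiosErrorBound}, that each intermediate fragment length remains a constant multiple of $\ell$ after each sub-cut, which is where the restriction to sticks of length $\ge\log^2 L$ is used. A secondary subtlety is the footnoted scenario in which a short stick breaks into fewer than $k$ pieces; one checks that this only affects sticks of length $O_k(1)$, hence an $o(1)$ fraction of all sticks, so it disturbs neither the Benford nor the non-Benford conclusion.
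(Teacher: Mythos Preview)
The statement you are attempting to prove is labeled in the paper as a \emph{conjecture}, and the paper gives no proof of it. The paper explicitly introduces \Cref{conj:discreteGeneralK} as a statement ``supported by our simulation results'' (cf.\ \Cref{fig:generalnumofparts}) and offers no argument beyond the analogy with \Cref{conj: contbreaks_dependent}. So there is no ``paper's own proof'' to compare your proposal against; what you have written is a proof \emph{strategy} for an open problem.

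As a strategy, your outline is the natural one and tracks the $k=2$ arguments closely. The one place where I would flag a genuine gap is your handling of the recursive cut-point error. You assert that $|X/\ell - p| = O_{k,n}(1/\ell)$ for every child and justify this by saying ``each intermediate fragment length remains a constant multiple of $\ell$ after each sub-cut.'' That last claim is false as stated: the $j$-th sub-cut is made on a fragment of length $\approx (1-p^{(1)})\cdots(1-p^{(j-1)})\ell$, and nothing prevents some $p^{(i)}$ from being close to $1$, making the intermediate fragment $o(\ell)$. What is true is that the \emph{absolute} rounding error accumulated over the $k-1$ sub-cuts is $O_{k,n}(1)$, so for a child of length $Y$ one gets $|X - Y| = O_{k,n}(1)$ and hence $|X/Y - 1| = O_{k,n}(1/Y)$, not $O_{k,n}(1/\ell)$. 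This is still enough to feed into the telescoping argument of \Cref{cor:relationEll_jH_j} and \Cref{lem:closenessOfPAndQ}, provided you discard sticks whose \emph{own} length (not their parent's) is below $\log^2 L$---which is exactly what \Cref{cor:mostSticksAreLong} controls. So the fix is minor, but the justification you gave for that step does not hold, and a referee would ask you to redo that paragraph. The remaining steps (the $\E[M_L]=O_{k,n}(\log L)$ bound, the short-stick counting, and the two non-Benford directions) look like routine adaptations of the $k=2$ lemmas with constants depending on $k$.
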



\section{Acknowledgements}
The authors are supported by NSF Grant DMS2241623, NSF Grant DMS1947438, Williams College, and University of Michigan. 



\

\end{document}